\newtheorem{theorem}{Theorem}
\newtheorem{axiom}[theorem]{Axiom}
\newtheorem{conjecture}[theorem]{Conjecture}
\newtheorem{corollary}[theorem]{Corollary}
\newtheorem{definition}[theorem]{Definition}
\newtheorem{example}[theorem]{Example}
\newtheorem{exercise}[theorem]{Exercise}
\newtheorem{lemma}[theorem]{Lemma}
\newtheorem{proposition}[theorem]{Proposition}
\newtheorem{remark}[theorem]{Remark}
\newenvironment{proof}[1][Proof]{\noindent\textbf{#1.} }{\ \hfill \ensuremath{\Box}}
\chardef\@x10\chardef\@xv60
\def\tcitime{
\def\@time{%
  \@minute\time\@hour\@minute\divide\@hour\@xv
  \ifnum\@hour<\@x 0\fi\the\@hour:%
  \multiply\@hour\@xv\advance\@minute-\@hour
  \ifnum\@minute<\@x 0\fi\the\@minute
  }}%
\def\x@hyperref#1#2#3{%
   \catcode`\~ = 12
   \catcode`\$ = 12
   \catcode`\_ = 12
   \catcode`\# = 12
   \catcode`\& = 12
   \y@hyperref{#1}{#2}{#3}%
}
\def\y@hyperref#1#2#3#4{%
   #2\ref{#4}#3
   \catcode`\~ = 13
   \catcode`\$ = 3
   \catcode`\_ = 8
   \catcode`\# = 6
   \catcode`\& = 4
}
\def\QCTOpt[#1]#2{%
  \def\QCTOptB{#1}
  \def\QCTOptA{#2}
}
\def\QCTNOpt#1{%
  \def\QCTOptA{#1}
  \let\QCTOptB\empty
}
\def\Qct{%
  \@ifnextchar[{%
    \QCTOpt}{\QCTNOpt}
}
\def\QCBOpt[#1]#2{%
  \def\QCBOptB{#1}%
  \def\QCBOptA{#2}%
}
\def\QCBNOpt#1{%
  \def\QCBOptA{#1}%
  \let\QCBOptB\empty
}
\def\Qcb{%
  \@ifnextchar[{%
    \QCBOpt}{\QCBNOpt}%
}
\def\PrepCapArgs{%
  \ifx\QCBOptA\empty
    \ifx\QCTOptA\empty
      {}%
    \else
      \ifx\QCTOptB\empty
        {\QCTOptA}%
      \else
        [\QCTOptB]{\QCTOptA}%
      \fi
    \fi
  \else
    \ifx\QCBOptA\empty
      {}%
    \else
      \ifx\QCBOptB\empty
        {\QCBOptA}%
      \else
        [\QCBOptB]{\QCBOptA}%
      \fi
    \fi
  \fi
}
\def\GRAPHICSPS#1{%
 \ifcase\GRAPHICSTYPE
   \special{ps: #1}%
 \or
   \special{language "PS", include "#1"}%
 \fi
}%
\def\graffile#1#2#3#4{%
    \bgroup
	   \@inlabelfalse
       \leavevmode
       \@ifundefined{bbl@deactivate}{\def~{\string~}}{\activesoff}%
        \raise -#4 \BOXTHEFRAME{%
           \hbox to #2{\raise #3\hbox to #2{\null #1\hfil}}}%
    \egroup
}%
\def\draftbox#1#2#3#4{%
 \leavevmode\raise -#4 \hbox{%
  \frame{\rlap{\protect\tiny #1}\hbox to #2%
   {\vrule height#3 width\z@ depth\z@\hfil}%
  }%
 }%
}%
\let\nographics=\@msidraft
\newif\ifwasdraft
\def\GRAPHIC#1#2#3#4#5{%
   \ifnum\@msidraft=\@ne\draftbox{#2}{#3}{#4}{#5}%
   \else\graffile{#1}{#3}{#4}{#5}%
   \fi
}
\def\addtoLaTeXparams#1{%
    \edef\LaTeXparams{\LaTeXparams #1}}%
\newif\ifBoxFrame \BoxFramefalse
\newif\ifOverFrame \OverFramefalse
\newif\ifUnderFrame \UnderFramefalse
\def\BOXTHEFRAME#1{%
   \hbox{%
      \ifBoxFrame
         \frame{#1}%
      \else
         {#1}%
      \fi
   }%
}
\def\doFRAMEparams#1{\BoxFramefalse\OverFramefalse\UnderFramefalse\readFRAMEparams#1\end}%
\def\readFRAMEparams#1{%
 \ifx#1\end%
  \let\next=\relax
  \else
  \ifx#1i\dispkind=\z@\fi
  \ifx#1d\dispkind=\@ne\fi
  \ifx#1f\dispkind=\tw@\fi
  \ifx#1t\addtoLaTeXparams{t}\fi
  \ifx#1b\addtoLaTeXparams{b}\fi
  \ifx#1p\addtoLaTeXparams{p}\fi
  \ifx#1h\addtoLaTeXparams{h}\fi
  \ifx#1X\BoxFrametrue\fi
  \ifx#1O\OverFrametrue\fi
  \ifx#1U\UnderFrametrue\fi
  \ifx#1w
    \ifnum\@msidraft=1\wasdrafttrue\else\wasdraftfalse\fi
    \@msidraft=\@ne
  \fi
  \let\next=\readFRAMEparams
  \fi
 \next
 }%
\def\IFRAME#1#2#3#4#5#6{%
      \bgroup
      \let\QCTOptA\empty
      \let\QCTOptB\empty
      \let\QCBOptA\empty
      \let\QCBOptB\empty
      #6%
      \parindent=0pt
      \leftskip=0pt
      \rightskip=0pt
      \setbox0=\hbox{\QCBOptA}%
      \@tempdima=#1\relax
      \ifOverFrame
          \typeout{This is not implemented yet}%
          \show\HELP
      \else
         \ifdim\wd0>\@tempdima
            \advance\@tempdima by \@tempdima
            \ifdim\wd0 >\@tempdima
               \setbox1 =\vbox{%
                  \unskip\hbox to \@tempdima{\hfill\GRAPHIC{#5}{#4}{#1}{#2}{#3}\hfill}%
                  \unskip\hbox to \@tempdima{\parbox[b]{\@tempdima}{\QCBOptA}}%
               }%
               \wd1=\@tempdima
            \else
               \textwidth=\wd0
               \setbox1 =\vbox{%
                 \noindent\hbox to \wd0{\hfill\GRAPHIC{#5}{#4}{#1}{#2}{#3}\hfill}\\%
                 \noindent\hbox{\QCBOptA}%
               }%
               \wd1=\wd0
            \fi
         \else
            \ifdim\wd0>0pt
              \hsize=\@tempdima
              \setbox1=\vbox{%
                \unskip\GRAPHIC{#5}{#4}{#1}{#2}{0pt}%
                \break
                \unskip\hbox to \@tempdima{\hfill \QCBOptA\hfill}%
              }%
              \wd1=\@tempdima
           \else
              \hsize=\@tempdima
              \setbox1=\vbox{%
                \unskip\GRAPHIC{#5}{#4}{#1}{#2}{0pt}%
              }%
              \wd1=\@tempdima
           \fi
         \fi
         \@tempdimb=\ht1
         \advance\@tempdimb by -#2
         \advance\@tempdimb by #3
         \leavevmode
         \raise -\@tempdimb \hbox{\box1}%
      \fi
      \egroup%
}%
\def\DFRAME#1#2#3#4#5{%
  \hfil\break
  \bgroup
     \leftskip\@flushglue
	 \rightskip\@flushglue
	 \parindent\z@
	 \parfillskip\z@skip
     \let\QCTOptA\empty
     \let\QCTOptB\empty
     \let\QCBOptA\empty
     \let\QCBOptB\empty
	 \vbox\bgroup
        \ifOverFrame 
           #5\QCTOptA\par
        \fi
        \GRAPHIC{#4}{#3}{#1}{#2}{\z@}%
        \ifUnderFrame 
           \break#5\QCBOptA
        \fi
	 \egroup
   \egroup
   \break
}%
\def\FFRAME#1#2#3#4#5#6#7{%
  \@ifundefined{floatstyle}
    {
     \begin{figure}[#1]%
    }
    {
	 \ifx#1h
      \begin{figure}[H]%
	 \else
      \begin{figure}[#1]%
	 \fi
	}
  \let\QCTOptA\empty
  \let\QCTOptB\empty
  \let\QCBOptA\empty
  \let\QCBOptB\empty
  \ifOverFrame
    #4
    \ifx\QCTOptA\empty
    \else
      \ifx\QCTOptB\empty
        \caption{\QCTOptA}%
      \else
        \caption[\QCTOptB]{\QCTOptA}%
      \fi
    \fi
    \ifUnderFrame\else
      \label{#5}%
    \fi
  \else
    \UnderFrametrue%
  \fi
  \begin{center}\GRAPHIC{#7}{#6}{#2}{#3}{\z@}\end{center}%
  \ifUnderFrame
    #4
    \ifx\QCBOptA\empty
      \caption{}%
    \else
      \ifx\QCBOptB\empty
        \caption{\QCBOptA}%
      \else
        \caption[\QCBOptB]{\QCBOptA}%
      \fi
    \fi
    \label{#5}%
  \fi
  \end{figure}%
 }%
\def\makeactives{
  \catcode`\"=\active
  \catcode`\;=\active
  \catcode`\:=\active
  \catcode`\'=\active
  \catcode`\~=\active
}
   \gdef\activesoff{%
      \def"{\string"}
      \def;{\string;}
      \def:{\string:}
      \def'{\string'}
      \def~{\string~}
    }
\def\FRAME#1#2#3#4#5#6#7#8{%
 \bgroup
 \ifnum\@msidraft=\@ne
   \wasdrafttrue
 \else
   \wasdraftfalse%
 \fi
 \def\LaTeXparams{}%
 \dispkind=\z@
 \def\LaTeXparams{}%
 \doFRAMEparams{#1}%
 \ifnum\dispkind=\z@\IFRAME{#2}{#3}{#4}{#7}{#8}{#5}\else
  \ifnum\dispkind=\@ne\DFRAME{#2}{#3}{#7}{#8}{#5}\else
   \ifnum\dispkind=\tw@
    \edef\@tempa{\noexpand\FFRAME{\LaTeXparams}}%
    \@tempa{#2}{#3}{#5}{#6}{#7}{#8}%
    \fi
   \fi
  \fi
  \ifwasdraft\@msidraft=1\else\@msidraft=0\fi{}%
  \egroup
 }%
\def\TEXUX#1{"texux"}
\long\def\QQQ#1#2{%
     \long\expandafter\def\csname#1\endcsname{#2}}%
\long\def\QQA#1#2{}%
\def\QTR#1#2{{\csname#1\endcsname #2}}
\def\EXPAND#1[#2]#3{}%
\def\NOEXPAND#1[#2]#3{}%
\def\LaTeXparent#1{}%
\def\ChildStyles#1{}%
\def\ChildDefaults#1{}%
\def\QTagDef#1#2#3{}%
  \providecommand{\UNICODE}[2][]{\protect\rule{.1in}{.1in}}
  \providecommand{\U}[1]{\protect\rule{.1in}{.1in}}
\def\QQfnmark#1{\footnotemark}
 \def\abstract{%
  \if@twocolumn
   \section*{Abstract (Not appropriate in this style!)}%
   \else \small 
   \begin{center}{\bf Abstract\vspace{-.5em}\vspace{\z@}}\end{center}%
   \quotation 
   \fi
  }%
   \def\registered{\relax\ifmmode{}\r@gistered
                    \else$\m@th\r@gistered$\fi}%
 \def\r@gistered{^{\ooalign
  {\hfil\raise.07ex\hbox{$\scriptstyle\rm\text{R}$}\hfil\crcr
  \mathhexbox20D}}}}{}%
\newdimen\theight
\def\newfmtname{LaTeX2e}
  \DeclareOldFontCommand{\rm}{\normalfont\rmfamily}{\mathrm}
  \DeclareOldFontCommand{\sf}{\normalfont\sffamily}{\mathsf}
  \DeclareOldFontCommand{\tt}{\normalfont\ttfamily}{\mathtt}
  \DeclareOldFontCommand{\bf}{\normalfont\bfseries}{\mathbf}
  \DeclareOldFontCommand{\it}{\normalfont\itshape}{\mathit}
  \DeclareOldFontCommand{\sl}{\normalfont\slshape}{\@nomath\sl}
  \DeclareOldFontCommand{\sc}{\normalfont\scshape}{\@nomath\sc}
\def\alpha{{\Greekmath 010B}}%
\def\beta{{\Greekmath 010C}}%
\def\gamma{{\Greekmath 010D}}%
\def\delta{{\Greekmath 010E}}%
\def\epsilon{{\Greekmath 010F}}%
\def\zeta{{\Greekmath 0110}}%
\def\eta{{\Greekmath 0111}}%
\def\theta{{\Greekmath 0112}}%
\def\iota{{\Greekmath 0113}}%
\def\kappa{{\Greekmath 0114}}%
\def\lambda{{\Greekmath 0115}}%
\def\mu{{\Greekmath 0116}}%
\def\nu{{\Greekmath 0117}}%
\def\xi{{\Greekmath 0118}}%
\def\pi{{\Greekmath 0119}}%
\def\rho{{\Greekmath 011A}}%
\def\sigma{{\Greekmath 011B}}%
\def\tau{{\Greekmath 011C}}%
\def\upsilon{{\Greekmath 011D}}%
\def\phi{{\Greekmath 011E}}%
\def\chi{{\Greekmath 011F}}%
\def\psi{{\Greekmath 0120}}%
\def\omega{{\Greekmath 0121}}%
\def\varepsilon{{\Greekmath 0122}}%
\def\vartheta{{\Greekmath 0123}}%
\def\varpi{{\Greekmath 0124}}%
\def\varrho{{\Greekmath 0125}}%
\def\varsigma{{\Greekmath 0126}}%
\def\varphi{{\Greekmath 0127}}%
\def\nabla{{\Greekmath 0272}}
\def\FindBoldGroup{%
   {\setbox0=\hbox{$\mathbf{x\global\edef\theboldgroup{\the\mathgroup}}$}}%
}
\def\Greekmath#1#2#3#4{%
    \if@compatibility
        \ifnum\mathgroup=\symbold
           \mathchoice{\mbox{\boldmath$\displaystyle\mathchar"#1#2#3#4$}}%
                      {\mbox{\boldmath$\textstyle\mathchar"#1#2#3#4$}}%
                      {\mbox{\boldmath$\scriptstyle\mathchar"#1#2#3#4$}}%
                      {\mbox{\boldmath$\scriptscriptstyle\mathchar"#1#2#3#4$}}%
        \else
           \mathchar"#1#2#3#4%
        \fi 
    \else 
        \FindBoldGroup
        \ifnum\mathgroup=\theboldgroup 
           \mathchoice{\mbox{\boldmath$\displaystyle\mathchar"#1#2#3#4$}}%
                      {\mbox{\boldmath$\textstyle\mathchar"#1#2#3#4$}}%
                      {\mbox{\boldmath$\scriptstyle\mathchar"#1#2#3#4$}}%
                      {\mbox{\boldmath$\scriptscriptstyle\mathchar"#1#2#3#4$}}%
        \else
           \mathchar"#1#2#3#4%
        \fi     	    
	  \fi}
\newif\ifGreekBold  \GreekBoldfalse
\let\SAVEPBF=\pbf
\def\pbf{\GreekBoldtrue\SAVEPBF}%
  \newcounter{equationnumber}  
  \def\mathletters{%
     \addtocounter{equation}{1}
     \edef\@currentlabel{\theequation}%
     \setcounter{equationnumber}{\c@equation}
     \setcounter{equation}{0}%
     \edef\theequation{\@currentlabel\noexpand\alph{equation}}%
  }
    \def\BibTeX{{\rm B\kern-.05em{\sc i\kern-.025em b}\kern-.08em
                 T\kern-.1667em\lower.7ex\hbox{E}\kern-.125emX}}}{}%
\def\AmS{{\protect\usefont{OMS}{cmsy}{m}{n}%
                A\kern-.1667em\lower.5ex\hbox{M}\kern-.125emS}}}{}%
\def\@@eqncr{\let\@tempa\relax
    \ifcase\@eqcnt \def\@tempa{& & &}\or \def\@tempa{& &}%
      \else \def\@tempa{&}\fi
     \@tempa
     \if@eqnsw
        \iftag@
           \@taggnum
        \else
           \@eqnnum\stepcounter{equation}%
        \fi
     \fi
     \global\tag@false
     \global\@eqnswtrue
     \global\@eqcnt\z@\cr}
\def\TCItag{\@ifnextchar*{\@TCItagstar}{\@TCItag}}
\def\@TCItag#1{%
    \global\tag@true
    \global\def\@taggnum{(#1)}}
\def\@TCItagstar*#1{%
    \global\tag@true
    \global\def\@taggnum{#1}}
\begin{document}

\title{The Multivariate Taylor Measure Function Space and a Generalization
of Taylor's Theorem}
\author{Athanasios C. Micheas \thanks{%
Department of Statistics, University of Missouri, 146 Middlebush Hall,
Columbia, MO 65211-6100, USA, email: micheasa@missouri.edu}}
\maketitle

\begin{abstract}
We propose a generalization of Taylor's theorem to measurable, non-analytic
functions, that do not require differentiation. We study consequences of the
generalization, including the definition and properties of a new space of
functions, which will be called the multivariate Taylor measure function
space. We illustrate through examples, that the proposed generalization
emerges as a unifying framework that includes many concepts from mathematics
as special cases. An application of the new function space is also presented
regarding solutions to linear and non-linear ordinary differential equations.
\end{abstract}

\textbf{Keywords}: Differential Equations; Inner Product; Heat Equation;
Hilbert Space; Multivariate Taylor Measure Function; Partial Differential
Equations; Polish Space; Taylor Measure; Wave Equation

\textbf{MSC Classification}: 46E20, 34A30, 41A58, 28A35, 35A09, 35L05, 35K05

\section{Introduction}

One of the most fundamental methods of function approximation in mathematics
and related fields is Taylor's theorem, with many important applications
emerging over the years including numerical algorithms for optimization (%
\cite{More1978}; \cite{Conn2000}), state estimation (\cite{Sarkka2013}, Ch.
5), ordinary differential equations (\cite{Hairer1993}, Ch. 2, \cite%
{ramos2025piecewise}), Taylor expansions for vector valued functions (\cite%
{feng2014exact}), solutions of stochastic partial differential equations (%
\cite{jentzen2010taylor}), approximation in statistical distribution theory (%
\cite{micheas2018theory}, \cite{tsagris2014folded}), hitting probabilities
in Brownian motion (\cite{hobson1999taylor}) and approximation of
exponential integrals in Bayesian statistics (\cite{Raudenbush2000}).
Algorithms based on Taylor{'}s approximation that can be viewed as
statistical inference problems include spline interpolation (\cite%
{Diaconis1988}; \cite{Kimeldorf1970}), numerical quadrature (\cite%
{Diaconis1988}; \cite{Karvonen2017}; \cite{Karvonenetal2018}, differential
equations (\cite{Schober2014}; \cite{Schober2019}; \cite{Teymur2016}), and
linear algebra (\cite{Cockayneetal2019a}; \cite{Hennig2015}).

Motivated by Taylor's series, the concept of a Taylor measure was defined
and studied in \cite{micheas2025taylor}. Let $\mathcal{B}(\mathbb{N})$
denote the Borel sets of $\mathbb{N}=\{0,1,2,...,\},$ under the usual
topology, and denote the collection of all signed, finite Taylor measures by%
\begin{equation}
\mathcal{T}^{\mathcal{F}}=\left\{ T_{\gamma ,\mathbf{a}}:T_{\gamma ,\mathbf{a%
}}(B)=\sum\limits_{n\in B}a_{n}\frac{\gamma ^{n}}{n!}\text{, }B\in \mathcal{B%
}(\mathbb{N}),\text{ }a_{n},\gamma \in \Re ,\text{ with }T_{\gamma ,\mathbf{a%
}}(\mathbb{N})<+\infty \right\} .  \label{FiniteTaylorMeasureSpace}
\end{equation}%
Properties of the space $\mathcal{T}^{\mathcal{F}}$ and first applications
where presented in the latter paper, where a first connection was
illustrated between Taylor's theorem and $\mathcal{T}^{\mathcal{F}},$ for
univariate, real-valued analytic functions. More precisely, using the Taylor
series for an analytic function $f,$ we have%
\begin{equation}
f(x)=\sum_{n=0}^{+\infty }\frac{f^{(n)}(x_{0})}{n!}(x-x_{0})^{n}=%
\sum_{n=0}^{+\infty }a_{n}\frac{\gamma ^{n}(x)}{n!}=T_{\gamma (x),\mathbf{a}%
}(\mathbb{N}),  \label{UnivariateTM}
\end{equation}%
$\gamma (x)=x-x_{0},$ and $a_{n}=f^{(n)}(x_{0}),$ $n\in \mathbb{N},$ so that
any real-valued analytic function from $\Re $ is represented via a Taylor
measure. More generally, any analytic function $f$ is obtained by
introducing an input $x\in \Re $ in the parameter $\gamma $ of a finite
Taylor measure, i.e., we define $T_{\gamma (x),\mathbf{a}}$, for some
analytic function $\gamma :\Re \rightarrow \Re $. In the exposition that
follows we will mostly concentrate on the real case, however generalizations
to the complex plane are straightforward.

In this paper we propose a generalization of Taylor's theorem to measurable,
non-analytic functions, and study properties of the resulting space of
functions, based on the concept of the Taylor measure. We further propose
first applications in mathematics, and in particular functional analysis and
differential equations. The proposed space of functions emerges as a
unifying framework that contains many important concepts from mathematics
and functional analysis, such as orthogonal polynomials, hypergeometric
series and functions, Lie series, Riemann's $\zeta $ function, Rodrigues'
formula, ordinary differential equations, and more, and their modifications
and extensions in the literature. The resulting space of functions will be
aptly called the multivariate Taylor measure function space.

The paper proceeds as follows; in Section 2 we introduce the multivariate
Taylor measure function space, show that it is a generalization of Taylor's
theorem, study its properties extensively and illustrate that it contains a
plethora of important functions as special cases. We present a first
application of multivariate Taylor measure functions to differential
equations in Section 3. Concluding remarks are given in the last section.

\section{Taylor Measure Function Space}

In order to generalize Taylor's theorem and the representation of equation (%
\ref{UnivariateTM}) to higher dimensions, we require an alternative
definition of a function via a Taylor measure. We can think of the following
definition as a generalization of Taylor's Theorem, and as we will see, it
includes even non-analytic functions.

\begin{definition}[Multivariate Taylor Measure Function]
\label{MTMFDef}Consider a function $f^{B}:\Re ^{p}\rightarrow \Re $, defined
via the finite, signed Taylor measure%
\begin{equation}
f^{B}(\mathbf{x})=T_{g(\mathbf{x}),\mathbf{a}(\mathbf{x})}(B)=\sum\limits_{n%
\in B}a_{n}(\mathbf{x})\frac{g(\mathbf{x})^{n}}{n!}<+\infty ,
\label{MultTaylorFunction}
\end{equation}%
with $T_{g(\mathbf{x}),\mathbf{a}(\mathbf{x})}\in \mathcal{T}^{\mathcal{F}}$
(keeping $\mathbf{x}\in \Re ^{p}$ fixed), where $\mathbf{a}(\mathbf{x}%
)=[a_{0}(\mathbf{x}),$ $a_{1}(\mathbf{x}),...]\in \Re ^{\infty },$ and $%
a_{n},g:\Re ^{p}\rightarrow \Re ,$ are measurable functions in $(\Re ^{p},%
\mathcal{B}(\Re ^{p}))$, for all $B\in \mathcal{B}(\mathbb{N}),$ and $n\in
\mathbb{N}.$ The function $f^{B}$ will be called a multivariate Taylor
measure function (MTMF), and we use $\mathcal{F}_{\Re }^{\Re ^{p}}$ to
denote this collection of real-valued functions from $\Re ^{p}$.\newline
When $B$ is countably infinite, the sum of Equation (\ref{MultTaylorFunction}%
) converges under similar assumptions required for the Taylor measure to be
finite; keeping $\mathbf{x}\in \Re ^{p}$ fixed, $T_{g(\mathbf{x}),\mathbf{a}(%
\mathbf{x})}(B)$ is finite provided that one of the following conditions
hold:\newline
a) $a_{n}(\mathbf{x})$ are uniformly bounded, i.e., $|a_{n}(\mathbf{x})|\leq
M,$ $\forall n$, for some $M>0,$ or\newline
b) $a_{n}(\mathbf{x})$ are asymptotically equivalent to $Mb(\mathbf{x})^{n}$%
, i.e., $a_{n}(\mathbf{x})\thicksim Mb(\mathbf{x})^{n},$ for some measurable
function $b:\Re ^{p}\rightarrow \Re ,$ and $M\in \Re $, or\newline
c) if $a_{n}(\mathbf{x})\neq 0,$ $\forall \mathbf{x}\in \Re ^{p},$ then $%
\underset{n\rightarrow +\infty }{\lim }\frac{a_{n+1}(\mathbf{x})}{(n+1)a_{n}(%
\mathbf{x})}<1,$ guarantees absolute convergence (and therefore convergence).
\end{definition}

In the exposition that follows, we will assume that any MTMF under
consideration satisfies one of the three conditions a)-c) of Definition \ref%
{MTMFDef}, so that it is well defined.\ Moreover, since sums and products of
measurable functions are measurable, any member of $\mathcal{F}_{\Re }^{\Re
^{p}}$ is also a measurable function in $(\Re ^{p},\mathcal{B}(\Re ^{p})).$
Note that the above definition is a direct generalization of the
representation of equation (\ref{UnivariateTM}), since the latter is a
special case for an analytic $f^{B}$, with $a_{n}(x)=a_{n}=f^{(n)}(x_{0}),$ $%
g(x)=\gamma (x)=x-x_{0},$ $p=1,$ and $B=\mathbb{N}.$ Obviously, any given
measurable function $f:\Re ^{p}\rightarrow \Re $, is a special case of a
MTMF since for $B=\{1\}$, and $\mathbf{a}(\mathbf{x})=[0,1,0,...],$ we have%
\begin{equation}
f(\mathbf{x})=\sum\limits_{n\in B}a_{n}(\mathbf{x})\frac{f(\mathbf{x})^{n}}{%
n!}=T_{f(\mathbf{x}),\mathbf{a}(\mathbf{x})}(B).  \label{MTFsinglefunction}
\end{equation}%
We call (\ref{MTFsinglefunction}) the trivial MTMF representation of the
measurable function $f$. Therefore, the space $\mathcal{F}_{\Re }^{\Re ^{p}}$
represents a collection of measurable functions that contains any measurable
function, as well as, analytic functions as special cases. Next, we collect
the multivariate Taylor theorem as a special case of MTMFs.

\begin{example}[Multivariate Taylor Theorem as special case of MTMFs]
Recall that if $f:\Re ^{p}\rightarrow \Re $, is analytic at the point $%
\mathbf{x}_{0}=$ $[x_{1,0},x_{2,0},$ $\dots ,x_{p,0}]\in \Re ^{p},$ its
multivariate Taylor expansion is given by%
\begin{equation}
f(\mathbf{x})=\sum_{n_{1}=0}^{+\infty }\sum_{n_{2}=0}^{+\infty }\dots
\sum_{n_{p}=0}^{+\infty }\frac{a_{n_{1},n_{2},\dots ,n_{p}}}{%
n_{1}!n_{2}!\dots n_{p}!}(x_{1}-x_{1,0})^{n_{1}}(x_{2}-x_{2,0})^{n_{2}}\dots
(x_{p}-x_{p,0})^{n_{p}},  \label{MVTaylorThm}
\end{equation}%
where $\mathbf{x}=$ $[x_{1},x_{2},$ $\dots ,x_{p}]\in \Re ^{p},$ and%
\begin{equation*}
a_{n_{1},n_{2},\dots ,n_{p}}=\left( \frac{\partial ^{n_{1}+n_{2}+\dots
+n_{p}}f}{\partial x_{1}^{n_{1}}\partial x_{2}^{n_{2}}\dots \partial
x_{p}^{n_{p}}}\right) (\mathbf{x}_{0}),
\end{equation*}%
denotes all the coefficients. Since any multivariate analytic function is
analytic in each of its individual dimensions, we can rewrite (\ref%
{MVTaylorThm}) in terms of Taylor measures using%
\begin{eqnarray*}
f(\mathbf{x}) &=&\sum_{n_{1}=0}^{+\infty }\sum_{n_{2}=0}^{+\infty }\dots
\sum_{n_{p}=0}^{+\infty }\frac{a_{n_{1},n_{2},\dots ,n_{p}}}{%
n_{1}!n_{2}!\dots n_{p}!}(x_{1}-x_{1,0})^{n_{1}}(x_{2}-x_{2,0})^{n_{2}}\dots
(x_{p}-x_{p,0})^{n_{p}} \\
&=&\sum_{n_{1}=0}^{+\infty }\left[ \sum_{n_{2}=0}^{+\infty }\dots
\sum_{n_{p}=0}^{+\infty }a_{n_{1},n_{2},\dots ,n_{p}}\frac{%
(x_{2}-x_{2,0})^{n_{2}}\dots (x_{p}-x_{p,0})^{n_{p}}}{n_{2}!\dots n_{p}!}%
\right] \frac{(x_{1}-x_{1,0})^{n_{1}}}{n_{1}!} \\
&=&\sum\limits_{n_{1}\in \mathbb{N}}a_{n_{1},n_{2},\dots ,n_{p}}^{(-1)}(%
\mathbf{x}_{-1})\frac{(x_{1}-x_{1,0})^{n_{1}}}{n_{1}!}=T_{x_{1}-x_{1,0},%
\mathbf{a}_{\mathbf{n}}^{(-1)}}(\mathbb{N}),
\end{eqnarray*}%
where $\mathbf{n}=[n_{1},n_{2},\dots ,n_{p}]$, $\mathbf{x}%
_{-i}=[x_{i+1},...,x_{p}],$ $\mathbf{a}_{\mathbf{n}}^{(-1)}(\mathbf{x}_{-1})$
$=$ $[$ $a_{0,n_{2},...,n_{p}}^{(-1)}(\mathbf{x}_{-1}),$ $%
a_{1,n_{2},...,n_{p}}^{(-1)}(\mathbf{x}_{-1}),$ $...],$ and%
\begin{eqnarray*}
a_{n_{1},n_{2},...,n_{p}}^{(-i)}(\mathbf{x}_{-i})
&=&\sum_{n_{i+1}=0}^{+\infty }\dots \sum_{n_{p}=0}^{+\infty
}a_{n_{1},n_{2},\dots ,n_{p}}\frac{(x_{i+1}-x_{i+1,0})^{n_{i+1}}\dots
(x_{p}-x_{p,0})^{n_{p}}}{n_{i+1}!\dots n_{p}!} \\
&=&T_{x_{i}-x_{i,0},\mathbf{a}_{\mathbf{n}}^{(-(i+1))}}(\mathbb{N}),
\end{eqnarray*}%
$i=1,2,...,p-1$, so that%
\begin{equation*}
f(\mathbf{x})=T_{x_{1}-x_{1,0},\mathbf{a}_{\mathbf{n}}^{(-1)}}(\mathbb{N}%
)=T_{x_{1}-x_{1,0},T_{x_{2}-x_{2,0},\mathbf{a}_{\mathbf{n}}^{(-2)}}(\mathbb{N%
})}(\mathbb{N}).
\end{equation*}%
Letting $\mathbf{a}_{\mathbf{n}}^{(-p)}=[a_{n_{1},n_{2},\dots
,n_{p-1},0},a_{n_{1},n_{2},\dots ,n_{p-1},1},...],$ and proceeding as above
iteratively we have%
\begin{equation*}
f(\mathbf{x})=T_{x_{1}-x_{1,0},T_{x_{2}-x_{2,0},...,T_{x_{p}-x_{p,0},\mathbf{%
a}_{\mathbf{n}}^{(-p)}}(\mathbb{N})}...(\mathbb{N})}(\mathbb{N}),
\end{equation*}%
and as a result, the standard multivariate Taylor theorem can be expressed
in terms of MTMFs.
\end{example}

An important special case of MTMFs is discussed in the following.

\begin{example}[Simple and Measurable functions]
\label{MTMFEx}Let $B\in \mathcal{B}(\mathbb{N})$ and let $(\Re ^{p},\mathcal{%
B}(\Re ^{p}))$ denote the Borel measurable space in $\Re ^{p}$ (using the
usual Euclidean topology)$.$ Consider a partition $\{C_{n}\}_{n\in B}$ of $%
\Re ^{p},$ with $C_{n}\in \mathcal{B}(\Re ^{p}),$ and some real constants $%
c_{n}$. Define $g(\mathbf{x})=1,$ and $\mathbf{a}(\mathbf{x})$ with $a_{n}(%
\mathbf{x})=n!c_{n}I_{C_{n}}(\mathbf{x}),$ $n\in B,$ and $a_{n}(\mathbf{x}%
)=0,$ $n\notin B.$ Using equation (\ref{MultTaylorFunction}), we have that
any (canonical) simple function%
\begin{equation*}
f^{B}(\mathbf{x})=T_{g(\mathbf{x}),\mathbf{a}(\mathbf{x})}(B)=\sum\limits_{n%
\in B}a_{n}(\mathbf{x})\frac{g(\mathbf{x})^{n}}{n!}=\sum\limits_{n\in
B}c_{n}I_{C_{n}}(\mathbf{x})<+\infty ,
\end{equation*}%
is a member of $\mathcal{F}_{\Re }^{\Re ^{p}}.$ The importance of this
representation cannot be overstated, since any measurable function can be
expressed as a limit of a monotone sequence of simple measurable functions
from $\mathcal{F}_{\Re }^{\Re ^{p}}.$ Therefore, the space $\mathcal{F}_{\Re
}^{\Re ^{p}}$ contains both the collection of measurable and analytic
real-valued functions from $\Re ^{p},$ as well as limits of sequences of
such functions, i.e., $\mathcal{F}_{\Re }^{\Re ^{p}}$ is a closed set of
functions under pointwise limits.
\end{example}

Before we investigate general properties of the space of functions $\mathcal{%
F}_{\Re }^{\Re ^{p}},$ we prove a useful result for multivariate,
real-valued functions.

\begin{lemma}
\label{ConjFuncs}For any measurable functions $a_{n},b,c_{n},d:\Re
^{p}\rightarrow \Re $, $n\in \mathbb{N},$ there exist measurable functions $%
g_{n},g:\Re ^{p}\rightarrow \Re $, $n\in \mathbb{N},$ such that%
\begin{equation}
a_{n}(\mathbf{x})b(\mathbf{x})^{n}+c_{n}(\mathbf{x})d(\mathbf{x})^{n}=g_{n}(%
\mathbf{x})g(\mathbf{x})^{n},  \label{Conjecture4Functions}
\end{equation}%
for all $\mathbf{x}\in \Re ^{p}$ and $n\in \mathbb{N}.$ More generally, for
any finite collection of measurable functions $a_{n,i},b_{i}:\Re
^{p}\rightarrow \Re $, $i=1,2,...,L<+\infty ,$ there exist measurable
functions $s_{n},s:\Re ^{p}\rightarrow \Re $, $n\in \mathbb{N},$ such that%
\begin{equation}
\sum\limits_{i=1}^{L}a_{n,i}(\mathbf{x})\left( b_{i}(\mathbf{x})\right)
^{n}=s_{n}(\mathbf{x})s(\mathbf{x})^{n}.  \label{ConjLFuncs}
\end{equation}
\end{lemma}

\begin{proof}
Take arbitrary $\mathbf{x}\in \Re ^{p}$. For $n=1$, we have a real
measurable function $u(\mathbf{x})=a_{1}(\mathbf{x})b(\mathbf{x})+c_{1}(%
\mathbf{x})d(\mathbf{x}),$ and there are infinite selections of functions $%
g_{1}$ and $g$ that satisfy (\ref{Conjecture4Functions}), with $u(\mathbf{x}%
)=g_{1}(\mathbf{x})g(\mathbf{x})$. Assume that (\ref{Conjecture4Functions})
holds for $n\in \mathbb{N}$. We prove it holds for $n+1.$ We have%
\begin{eqnarray*}
a_{n+1}(\mathbf{x})b(\mathbf{x})^{n+1}+c_{n+1}(\mathbf{x})d(\mathbf{x}%
)^{n+1} &=&(a_{n+1}(\mathbf{x})b(\mathbf{x}))b(\mathbf{x})^{n}+(c_{n+1}(%
\mathbf{x})d(\mathbf{x}))d(\mathbf{x})^{n} \\
&=&g_{n}(\mathbf{x})g(\mathbf{x})^{n}=\left( \frac{g_{n}(\mathbf{x})}{g(%
\mathbf{x})}\right) g(\mathbf{x})^{n+1},
\end{eqnarray*}%
so that by induction the claim holds. Clearly, the functions $g_{n}$ and $g$
are not unique. Note that if the left hand side in (\ref%
{Conjecture4Functions}) is non-zero we must have $g(\mathbf{x)}\neq 0,$ for
all $\mathbf{x}\in \Re ^{p},$ and if the left hand side is zero we can
simply choose $g_{n}=0,$ $\forall n\in \mathbb{N},$ and any function $g$.%
\newline
For the generalization, since we have the result for $L=2$, using induction
on $L$, we have immediately (\ref{ConjLFuncs}), for any finite $L$.
\end{proof}

\subsection{Properties of $\mathcal{F}_{\Re }^{\Re ^{p}}$}

In order to study $\mathcal{F}_{\Re }^{\Re ^{p}}$ we require an inner
product in the space that will allow us to investigate its properties.
Unfortunately, for any $f_{1},f_{2}\in \mathcal{F}_{\Re }^{\Re ^{p}},$ the
usual inner product%
\begin{equation}
\rho _{2}(f_{1},f_{2})=\int\limits_{\Re ^{p}}f_{1}(\mathbf{x})f_{2}(\mathbf{x%
})\mu _{p}(d\mathbf{x}).  \label{InnerProduct1}
\end{equation}%
where $\mu _{p}$ denotes Lebesgue measure in $(\Re ^{p},\mathcal{B}(\Re
^{p}))$, will not work in the space $\mathcal{F}_{\Re }^{\Re ^{p}},$ so we
define a new map that will serve as the inner product we need. Although any
measure can be used to define the integrals that follow, we will assume
without loss of generality Lebesgue measure as the integrating measure.
Special care is required in certain places since the constants used to
define elements of $\mathcal{T}^{\mathcal{F}}$ are now measurable functions
of $\mathbf{x}\in \Re ^{p}$.

\begin{lemma}
\label{InnerProductMTMELemma}Let $f_{1}^{B},f_{2}^{B}\in \mathcal{F}_{\Re
}^{\Re ^{p}}$, so that $f_{i}^{B}(\mathbf{x})=T_{g_{i}(\mathbf{x}),\mathbf{a}%
_{i}(\mathbf{x})}(B)<+\infty ,$ with $\mathbf{a}_{i}(\mathbf{x})=[a_{0,i}(%
\mathbf{x}),$ $a_{1,i}(\mathbf{x}),...],$ for some measurable functions $%
a_{n,i},g_{i}:\Re ^{p}\rightarrow \Re ,$ $i=1,2$, and define the map $\rho _{%
\mathcal{F}}:\mathcal{F}_{\Re }^{\Re ^{p}}\times \mathcal{F}_{\Re }^{\Re
^{p}}\rightarrow \Re ,$ by%
\begin{equation}
\rho _{\mathcal{F}}\left( f_{1}^{B},f_{2}^{B}\right) =\sum\limits_{n\in
B}\int\limits_{\Re ^{p}}a_{n,1}(\mathbf{x})a_{n,2}(\mathbf{x})\frac{(g_{1}(%
\mathbf{x})g_{2}(\mathbf{x}))^{n}}{n!}\mu _{p}(d\mathbf{x}),
\label{InnerProductMTMF}
\end{equation}%
for any $B\in \mathcal{B}(\mathbb{N})$, under appropriate conditions on the
functions $a_{n,i}$ and $g_{i},$ for $\rho _{\mathcal{F}}$ to be well
defined (see Remark \ref{CondRemarks} below for details). Then, equipped
with $\rho _{\mathcal{F}},$ the space $\mathcal{F}_{\Re }^{\Re ^{p}}$ is an
inner product space.
\end{lemma}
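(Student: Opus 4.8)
The plan is to verify the three defining properties of an inner product --- symmetry, bilinearity, and positive-definiteness --- after first confirming that $\mathcal{F}_{\Re }^{\Re ^{p}}$ is closed under the vector-space operations, so that the phrase ``inner product space'' is meaningful. Closure is precisely where the preceding result does the work: given $f_{1}^{B},f_{2}^{B}\in \mathcal{F}_{\Re }^{\Re ^{p}}$ and scalars $\alpha ,\beta $, the pointwise combination $\alpha f_{1}^{B}(\mathbf{x})+\beta f_{2}^{B}(\mathbf{x})$ has summands $\alpha a_{n,1}(\mathbf{x})g_{1}(\mathbf{x})^{n}+\beta a_{n,2}(\mathbf{x})g_{2}(\mathbf{x})^{n}$, and by (\ref{ConjLFuncs}) (applied with $L=2$) each such summand can be written as $s_{n}(\mathbf{x})s(\mathbf{x})^{n}$ for suitable measurable $s_{n},s$. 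Hence $\alpha f_{1}^{B}+\beta f_{2}^{B}=T_{s,\mathbf{s}}(B)\in \mathcal{F}_{\Re }^{\Re ^{p}}$, with $\mathbf{s}=[s_{0}(\mathbf{x}),s_{1}(\mathbf{x}),\dots ]$.

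The observation that streamlines the remaining checks is that $\rho _{\mathcal{F}}$ depends on its arguments only through the products $h_{n,i}(\mathbf{x}):=a_{n,i}(\mathbf{x})g_{i}(\mathbf{x})^{n}$, since $a_{n,1}a_{n,2}(g_{1}g_{2})^{n}=h_{n,1}h_{n,2}$. Symmetry $\rho _{\mathcal{F}}(f_{1}^{B},f_{2}^{B})=\rho _{\mathcal{F}}(f_{2}^{B},f_{1}^{B})$ is then immediate from commutativity of multiplication inside the integrand. For bilinearity I would substitute the representation $s_{n}s^{n}=\alpha a_{n,1}g_{1}^{n}+\beta a_{n,2}g_{2}^{n}$ of the sum into (\ref{InnerProductMTMF}) paired against a third element $f_{3}^{B}$; writing the integrand as $a_{n,3}g_{3}^{n}\cdot (s_{n}s^{n})$ and expanding, the auxiliary $s_{n},s$ cancel and the expression splits exactly into $\alpha \rho _{\mathcal{F}}(f_{1}^{B},f_{3}^{B})+\beta \rho _{\mathcal{F}}(f_{2}^{B},f_{3}^{B})$. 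Reassuringly, this also shows that the value is insensitive to the (non-unique) choice of $s_{n},s$ returned by the preceding result. For positive-definiteness, setting $f_{1}^{B}=f_{2}^{B}=f^{B}$ in (\ref{InnerProductMTMF}) gives
\[
\rho _{\mathcal{F}}(f^{B},f^{B})=\sum_{n\in B}\int_{\Re ^{p}}\frac{\left( a_{n}(\mathbf{x})g(\mathbf{x})^{n}\right) ^{2}}{n!}\,\mu _{p}(d\mathbf{x})\geq 0,
\]
a sum of integrals of nonnegative functions; vanishing of the sum forces $a_{n}(\mathbf{x})g(\mathbf{x})^{n}=0$ for $\mu _{p}$-almost every $\mathbf{x}$ and every $n\in B$, whence $f^{B}(\mathbf{x})=\sum_{n\in B}a_{n}(\mathbf{x})g(\mathbf{x})^{n}/n!=0$ almost everywhere.

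I expect the real obstacle to lie not in these algebraic steps but in two issues the computation silently assumes. First, \textbf{finiteness}: for $\rho _{\mathcal{F}}$ to be $\Re $-valued as asserted, the series of integrals must converge, which is not automatic --- already $f\equiv 1$ yields $\rho _{\mathcal{F}}(f,f)=\int_{\Re ^{p}}1\,\mu _{p}(d\mathbf{x})=+\infty $ --- so one must implicitly restrict to the subcollection of MTMFs for which $\sum_{n\in B}\int (a_{n}g^{n})^{2}/n!<+\infty $, in direct analogy with $L^{2}$. Second, the definiteness argument only delivers $f^{B}=0$ almost everywhere rather than identically, so strictly $\rho _{\mathcal{F}}$ is a semi-inner product on functions and becomes a genuine inner product only on the associated $\mu _{p}$-a.e.\ equivalence classes; the statement should be read in that sense. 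Restricting the ambient space to secure finiteness and passing to a.e.\ classes to secure definiteness are the points I would treat with the most care.
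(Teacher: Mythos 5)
Your proposal is correct and follows essentially the same route as the paper's proof: symmetry from commutativity of the integrand, linearity in the first argument via the decomposition $\alpha a_{n,1}(\mathbf{x})g_{1}(\mathbf{x})^{n}+\beta a_{n,2}(\mathbf{x})g_{2}(\mathbf{x})^{n}=s_{n}(\mathbf{x})s(\mathbf{x})^{n}$ supplied by Conjecture \ref{ConjFuncs} (the paper's equation (\ref{LinearFormMTMF})), and positive-definiteness from the integrand being $\left( a_{n}(\mathbf{x})g(\mathbf{x})^{n}\right) ^{2}/n!\geq 0$. Your two closing caveats are in fact more careful than the paper, whose proof silently assumes finiteness of the series of integrals (integrability conditions of this kind appear only later, in Theorem \ref{TaylorPolishMTMF}) and asserts strict positivity for nonzero $f^{B}$ without passing to $\mu _{p}$-a.e.\ equivalence classes.
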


\begin{proof}
Let $f_{1}^{B},f_{2}^{B},f_{3}^{B}\in \mathcal{F}_{\Re }^{\Re ^{p}},$ and
take arbitrary $B\in \mathcal{B}(\mathbb{N}).$ First note that $\rho _{%
\mathcal{F}}$ is symmetric since%
\begin{eqnarray*}
\rho _{\mathcal{F}}\left( f_{1}^{B},f_{2}^{B}\right) &=&\sum\limits_{n\in
B}\int\limits_{\Re ^{p}}a_{n,1}(\mathbf{x})a_{n,2}(\mathbf{x})\frac{(g_{1}(%
\mathbf{x})g_{2}(\mathbf{x}))^{n}}{n!}\mu _{p}(d\mathbf{x}) \\
&=&\sum\limits_{n\in B}\int\limits_{\Re ^{p}}a_{n,2}(\mathbf{x})a_{n,1}(%
\mathbf{x})\frac{(g_{2}(\mathbf{x})g_{1}(\mathbf{x}))^{n}}{n!}\mu _{p}(d%
\mathbf{x}) \\
&=&\rho _{\mathcal{F}}\left( f_{2}^{B},f_{1}^{B}\right) ,
\end{eqnarray*}%
and positive definite for non-zero $f_{1}^{B},$ since%
\begin{equation*}
\rho _{\mathcal{F}}\left( f_{1}^{B},f_{1}^{B}\right) =\sum\limits_{n\in
B}\int\limits_{\Re ^{p}}a_{n,1}^{2}(\mathbf{x})\frac{(g_{1}^{2}(\mathbf{x}%
))^{n}}{n!}\mu _{p}(d\mathbf{x})>0.
\end{equation*}%
In the case where $\rho _{\mathcal{F}}\left( f_{1}^{B},f_{1}^{B}\right) =0,$
since the latter is a sum of non-negative positive functions $a_{n,1}^{2}(%
\mathbf{x})(g_{1}^{2}(\mathbf{x}))^{n}$, then we must have $a_{n,1}(\mathbf{x%
})g_{1}(\mathbf{x})=0,$ and as a consequence $f_{1}^{B}=0$ almost everywhere
with respect to $\mu _{p}.$\newline
Moreover, for arbitrary $a,b\in \Re ,$ using (\ref{Conjecture4Functions}) we
write%
\begin{equation*}
aa_{n,1}(\mathbf{x})g_{1}(\mathbf{x})^{n}+ba_{n,2}(\mathbf{x})g_{2}(\mathbf{x%
})^{n}=g_{n}(\mathbf{x})g(\mathbf{x})^{n},
\end{equation*}%
for some measurable functions $g_{n}$ and $g$, so that%
\begin{equation}
af_{1}^{B}(\mathbf{x})+bf_{2}^{B}(\mathbf{x})=\sum\limits_{n\in B}(aa_{n,1}(%
\mathbf{x})g_{1}(\mathbf{x})^{n}+ba_{n,2}(\mathbf{x})g_{2}(\mathbf{x})^{n})%
\frac{1}{n!}=\sum\limits_{n\in B}g_{n}(\mathbf{x})\frac{g(\mathbf{x})^{n}}{n!%
}.  \label{LinearFormMTMF}
\end{equation}%
As a result, we have linearity in the first argument since%
\begin{eqnarray*}
&&a\rho _{\mathcal{F}}\left( f_{1}^{B},f_{3}^{B}\right) +b\rho _{\mathcal{M}%
}\left( f_{2}^{B},f_{3}^{B}\right) \\
&=&\sum\limits_{n\in B}\int\limits_{\Re ^{p}}\left( aa_{n,1}(\mathbf{x}%
)a_{n,3}(\mathbf{x})(g_{1}(\mathbf{x})g_{3}(\mathbf{x}))^{n}+ba_{n,2}(%
\mathbf{x})a_{n,3}(\mathbf{x})(g_{2}(\mathbf{x})g_{3}(\mathbf{x}%
))^{n}\right) \frac{1}{n!}\mu _{p}(d\mathbf{x}) \\
&=&\sum\limits_{n\in B}\int\limits_{\Re ^{p}}\left( aa_{n,1}(\mathbf{x}%
)g_{1}^{n}(\mathbf{x})+ba_{n,2}(\mathbf{x})g_{2}^{n}(\mathbf{x})\right)
a_{n,3}(\mathbf{x})\frac{g_{3}^{n}(\mathbf{x})}{n!}\mu _{p}(d\mathbf{x}) \\
&=&\sum\limits_{n\in B}\int\limits_{\Re ^{p}}\left( g_{n}(\mathbf{x})g(%
\mathbf{x})^{n}\right) a_{n,3}(\mathbf{x})\frac{g_{3}^{n}(\mathbf{x})}{n!}%
\mu _{p}(d\mathbf{x})=\sum\limits_{n\in B}\int\limits_{\Re ^{p}}g_{n}(%
\mathbf{x})a_{n,3}(\mathbf{x})\frac{\left( g(\mathbf{x})g_{3}(\mathbf{x}%
)\right) ^{n}}{n!}\mu _{p}(d\mathbf{x}) \\
&=&\rho _{\mathcal{F}}\left( af_{1}^{B}+bf_{2}^{B},f_{3}^{B}\right) ,
\end{eqnarray*}%
so that $\rho _{\mathcal{F}}(.,.)$ defines an inner product in $\mathcal{F}%
_{\Re }^{\Re ^{p}}$.
\end{proof}

Now based on the inner product $\rho _{\mathcal{F}},$ we can immediately
equip $\mathcal{F}_{\Re }^{\Re ^{p}}$ with the induced norm%
\begin{equation}
\left\Vert f^{B}\right\Vert _{\rho _{\mathcal{F}}}=\sqrt{\rho _{\mathcal{F}%
}\left( f^{B},f^{B}\right) }=\sqrt{\sum\limits_{n\in B}\int\limits_{\Re
^{p}}a_{n}(\mathbf{x})^{2}\frac{g(\mathbf{x})^{2n}}{n!}\mu _{p}(d\mathbf{x})}%
,  \label{MTMNorm}
\end{equation}%
and distance%
\begin{equation}
d_{\mathcal{F}}\left( f_{1}^{B},f_{2}^{B}\right) =\left\Vert
f_{1}^{B}-f_{2}^{B}\right\Vert _{\rho _{\mathcal{F}}}=\sqrt{\rho _{\mathcal{F%
}}\left( f_{1}^{B}-f_{2}^{B},f_{1}^{B}-f_{2}^{B}\right) },
\label{MTMDistance}
\end{equation}%
so that $\mathcal{F}_{\Re }^{\Re ^{p}}$ becomes a normed vector space and $%
\left( \mathcal{F}_{\Re }^{\Re ^{p}},d_{\mathcal{F}}\right) $ is a metric
space. Note here that for arbitrary $B_{1},B_{2}\in \mathcal{B}(\mathbb{N}),$
$B_{1}\neq B_{2},$ with $C=B_{1}\cap B_{2},$ we can still define the inner
product via%
\begin{equation}
\rho _{\mathcal{F}}\left( f_{1}^{B_{1}},f_{2}^{B_{2}}\right) =\rho _{%
\mathcal{F}}\left( f_{1}^{C},f_{2}^{C}\right) ,
\end{equation}%
for any $f_{1}^{B_{1}},f_{2}^{B_{2}}\in \mathcal{F}_{\Re }^{\Re ^{p}},$
where we set $a_{n,1},a_{n,2}=0,$ for $n\in \mathbb{N}\smallsetminus C.$

In view of equation (\ref{MTFsinglefunction}), we can easily verify that (%
\ref{InnerProduct1}) is a special case of (\ref{InnerProductMTMF}) for $%
B=\{1\},$ and $\mathbf{a}_{1}(x)=\mathbf{a}_{2}(x)=[0,1,0,...],$ $g_{1}(%
\mathbf{x})=f_{1}(\mathbf{x})$ and $g_{2}(\mathbf{x})=f_{2}(\mathbf{x}),$ so
that the standard inner product for functions in the literature, is a
special case of $\rho _{\mathcal{F}}$.

In order to rigorously justify how to swap the order of summation and
integration in the latter lemma, as well as the sums of the integrals in
Equation (\ref{InnerProductMTMF}) to be well defined and finite, we
summarize the additional conditions we will require next. One of these
conditions will be assumed to hold for all the proofs of the results that
follow in the exposition throughout the rest of the paper, in order to
guarantee the validity of the mathematical statements.

\begin{remark}[Integrability Conditions for $a_{n,i}$ and $g_{i}$]
\label{CondRemarks}In general, in order to swap the order of summation and
integration, and in addition to have a finite result, one can appeal to
Lebesgue's Dominated Convergence Theorem (DCT) or Fubini-Tonelli theorems
(see \cite{micheas2018theory}, Theorems 3.14, 3.22 and 3.23). Depending on
which of the conditions a)-b) of Definition \ref{MTMFDef} are satisfied in
order for the MTMFs $f_{i}^{B}(\mathbf{x})=T_{g_{i}(\mathbf{x}),\mathbf{a}%
_{i}(\mathbf{x})}(B)<+\infty ,$ to be well defined and finite, we will
impose corresponding integrability conditions in order for $\rho _{\mathcal{F%
}}\left( f_{1}^{B},f_{2}^{B}\right) $ to be finite and to be able to swap
the integration and summation signs.\newline
In view of the Cauchy-Schwarz inequality for Hilbert spaces we have%
\begin{equation}
\left\vert \rho _{\mathcal{F}}\left( f_{1}^{B},f_{2}^{B}\right) \right\vert
\leq \left\Vert f_{1}^{B}\right\Vert _{\rho _{\mathcal{F}}}\left\Vert
f_{2}^{B}\right\Vert _{\rho _{\mathcal{F}}},  \label{CauchySchwarz}
\end{equation}%
so that integrability conditions required for $\rho _{\mathcal{F}}\left(
f_{1}^{B},f_{2}^{B}\right) <+\infty ,$ can be obtained via integrability
conditions for $\left\Vert f_{i}^{B}\right\Vert _{\rho _{\mathcal{F}%
}}^{2}<+\infty ,$ $i=1,2.$

\begin{enumerate}
\item Finite sums: For the easy case, when $B\in \mathcal{B}(\mathbb{N}),$
is finite we do not have to worry about swapping summation and integration
signs and we only require%
\begin{equation}
\left\Vert f^{B}\right\Vert _{\rho _{\mathcal{F}}}^{2}=\sum\limits_{n\in
B}\int\limits_{\Re ^{p}}a_{n}(\mathbf{x})^{2}\frac{g(\mathbf{x})^{2n}}{n!}%
\mu _{p}(d\mathbf{x})<+\infty ,  \label{L2condintegrability}
\end{equation}%
for any $f^{B}\in \mathcal{F}_{\Re }^{\Re ^{p}}$. In terms of the underlying
functions $a_{n}$ and $g,$ in order for $\left\Vert f^{B}\right\Vert _{\rho
_{\mathcal{F}}}^{2}<+\infty ,$ we require%
\begin{equation}
\int\limits_{\Re ^{p}}a_{n}^{2}(\mathbf{x})g(\mathbf{x})^{2n}\mu _{p}(d%
\mathbf{x})<+\infty ,  \label{IntegrabilityCond1}
\end{equation}%
for all $n\in B.$ The latter condition is satisfied for measurable functions
$a_{n}$ and $g,$ under the following scenarios based on the MTMF $f^{B}(%
\mathbf{x})=T_{g(\mathbf{x}),\mathbf{a}(\mathbf{x})}(B)<+\infty ,$
conditions:

\begin{enumerate}
\item Uniformly bounded: When $|a_{n}(\mathbf{x})|\leq M,$ we will require%
\begin{equation}
\int\limits_{\Re ^{p}}g(\mathbf{x})^{n}\mu _{p}(d\mathbf{x})<+\infty ,
\label{CondIntegrab1}
\end{equation}%
for all $n\in \mathbb{N}.$

\item Asymptotically equivalent: When $a_{n}(\mathbf{x})\thicksim Mb(\mathbf{%
x})^{n},$ for some function $b:\Re ^{p}\rightarrow \Re ,$ and $M\in \Re $,
we require%
\begin{equation}
\int\limits_{\Re ^{p}}\left( b(\mathbf{x})g(\mathbf{x})\right) ^{n}\mu _{p}(d%
\mathbf{x})<+\infty ,  \label{CondIntegrab2}
\end{equation}%
for all $n\in \mathbb{N}.$
\end{enumerate}

\item Countably infinite sum: For the harder case, assume that $B$ is
countably infinite, and appeal to DCT. More precisely, for any sequence of
functions $f_{n}(\mathbf{x}),$ assume that there exists a sequence of
functions $h_{n}(\mathbf{x})$ such that
\begin{equation}
\left\vert f_{n}(\mathbf{x})\right\vert \leq h_{n}(\mathbf{x}),\forall n\in
\mathbb{N}\text{ and almost all }\mathbf{x}\in \Re ^{p},  \label{DCTCond1}
\end{equation}%
and in addition, assume that%
\begin{equation}
\sum\limits_{n\in B}h_{n}(\mathbf{x})<+\infty ,  \label{DCTCond2}
\end{equation}%
and%
\begin{equation}
\int\limits_{\Re ^{p}}h_{n}(\mathbf{x})\mu _{p}(d\mathbf{x})<+\infty .
\label{DCTCond3}
\end{equation}%
Then we can swap the order of summation and integration and write%
\begin{equation}
\sum\limits_{n\in B}\int\limits_{\Re ^{p}}f_{n}(\mathbf{x})\mu _{p}(d\mathbf{%
x})=\int\limits_{\Re ^{p}}\sum\limits_{n\in B}f_{n}(\mathbf{x})\mu _{p}(d%
\mathbf{x}).  \label{SwapSumIntegral}
\end{equation}%
In terms of $\rho _{\mathcal{F}}\left( f_{1}^{B},f_{2}^{B}\right) ,$ we have%
\begin{equation*}
\left\vert f_{n}(\mathbf{x})\right\vert =\left\vert a_{n,1}(\mathbf{x}%
)a_{n,2}(\mathbf{x})\right\vert \frac{\left\vert g_{1}(\mathbf{x})g_{2}(%
\mathbf{x})\right\vert ^{n}}{n!},
\end{equation*}%
which will require the following conditions;

\begin{enumerate}
\item Uniformly bounded: Under condition a) of Definition \ref{MTMFDef},
when $|a_{n,1}(\mathbf{x})|\leq M_{1},$ $|a_{n,2}(\mathbf{x})|\leq M_{2},$
for all $n\in \mathbb{N},$ yields%
\begin{equation*}
\left\vert f_{n}(\mathbf{x})\right\vert \leq h_{n}(\mathbf{x})=\frac{%
M_{1}M_{2}}{n!}\left\vert g_{1}(\mathbf{x})g_{2}(\mathbf{x})\right\vert ^{n},
\end{equation*}%
with condition (\ref{DCTCond2}) satisfied since%
\begin{equation*}
\sum\limits_{n\in B}\frac{M_{1}M_{2}}{n!}\left\vert g_{1}(\mathbf{x})g_{2}(%
\mathbf{x})\right\vert ^{n}\leq M_{1}M_{2}\sum\limits_{n\in \mathbb{N}}\frac{%
1}{n!}\left\vert g_{1}(\mathbf{x})g_{2}(\mathbf{x})\right\vert
^{n}=M_{1}M_{2}e^{\left\vert g_{1}(\mathbf{x})g_{2}(\mathbf{x})\right\vert
}<+\infty ,
\end{equation*}%
and condition (\ref{DCTCond3}) is also satisfied provided that%
\begin{equation}
\int\limits_{\Re ^{p}}\left\vert g_{1}(\mathbf{x})g_{2}(\mathbf{x}%
)\right\vert ^{n}\mu _{p}(d\mathbf{x})<+\infty .
\end{equation}

\item Asymptotically equivalent: On the other hand, under condition b) of
Definition \ref{MTMFDef}, we have $a_{n,1}(\mathbf{x})\thicksim M_{1}b_{1}(%
\mathbf{x})^{n},$ and $a_{n,2}(\mathbf{x})\thicksim M_{2}b_{2}(\mathbf{x}%
)^{n},$ for some measurable functions $b_{1},b_{2}:\Re ^{p}\rightarrow \Re ,$
and $M_{1},M_{2}\in \Re $, we have that $\forall \varepsilon >0,$ $\exists
n_{0}>0,$ such that for all $n>n_{0},$ we have
\begin{equation*}
\left\vert \frac{a_{n,1}(\mathbf{x})}{M_{1}b_{1}(\mathbf{x})^{n}}%
-1\right\vert <\varepsilon \Rightarrow a_{n,1}(\mathbf{x})<M_{1}b_{1}(%
\mathbf{x})^{n}(\varepsilon +1),
\end{equation*}%
and similarly, $\exists n_{1}>0,$ such that for all $n>n_{1},$ yields%
\begin{equation*}
\left\vert \frac{a_{n,2}(\mathbf{x})}{M_{2}b_{2}(\mathbf{x})^{n}}%
-1\right\vert <\varepsilon \Rightarrow a_{n,2}(\mathbf{x})<M_{2}b_{2}(%
\mathbf{x})^{n}(\varepsilon +1).
\end{equation*}%
Then we can write%
\begin{equation*}
\left\vert f_{n}(\mathbf{x})\right\vert \leq h_{n}(\mathbf{x})=\frac{%
\left\vert M_{1}M_{2}\right\vert }{n!}(\varepsilon +1)^{2}\left\vert b_{1}(%
\mathbf{x})b_{2}(\mathbf{x})g_{1}(\mathbf{x})g_{2}(\mathbf{x})\right\vert
^{n},
\end{equation*}%
and sending $\varepsilon \rightarrow 0,$%
\begin{equation*}
\left\vert f_{n}(\mathbf{x})\right\vert \leq h_{n}(\mathbf{x})=\frac{%
\left\vert M_{1}M_{2}\right\vert }{n!}\left\vert b_{1}(\mathbf{x})b_{2}(%
\mathbf{x})g_{1}(\mathbf{x})g_{2}(\mathbf{x})\right\vert ^{n}.
\end{equation*}%
Consequently, condition (\ref{DCTCond2}) is satisfied since%
\begin{eqnarray*}
\sum\limits_{n\in B}\frac{\left\vert M_{1}M_{2}\right\vert }{n!}\left\vert
b_{1}(\mathbf{x})b_{2}(\mathbf{x})g_{1}(\mathbf{x})g_{2}(\mathbf{x}%
)\right\vert ^{n} &\leq &\left\vert M_{1}M_{2}\right\vert \sum\limits_{n\in
\mathbb{N}}\frac{1}{n!}\left\vert b_{1}(\mathbf{x})b_{2}(\mathbf{x})g_{1}(%
\mathbf{x})g_{2}(\mathbf{x})\right\vert ^{n} \\
&=&\left\vert M_{1}M_{2}\right\vert e^{\left\vert b_{1}(\mathbf{x})b_{2}(%
\mathbf{x})g_{1}(\mathbf{x})g_{2}(\mathbf{x})\right\vert }<+\infty ,
\end{eqnarray*}%
and condition (\ref{DCTCond3}) is also satisfied provided that%
\begin{equation}
\int\limits_{\Re ^{p}}\left\vert b_{1}(\mathbf{x})b_{2}(\mathbf{x})g_{1}(%
\mathbf{x})g_{2}(\mathbf{x})\right\vert ^{n}\mu _{p}(d\mathbf{x})<+\infty .
\end{equation}%
for all $n\in \mathbb{N}.$
\end{enumerate}
\end{enumerate}
\end{remark}

We prove completeness of $\mathcal{F}_{\Re }^{\Re ^{p}}$ next.

\begin{lemma}
The space $\mathcal{F}_{\Re }^{\Re ^{p}}$ equipped with the norm $\left\Vert
.\right\Vert _{\rho _{\mathcal{F}}}$ is complete.
\end{lemma}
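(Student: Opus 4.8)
The plan is to exhibit $(\mathcal{F}_{\Re}^{\Re^{p}},\|\cdot\|_{\rho_{\mathcal{F}}})$ as isometric to a Hilbert-space direct sum that is manifestly complete, and then read off completeness from that identification. Concretely, for a represented element $f^{B}=T_{g,\mathbf{a}}(B)$ I would set, for each $n\in B$,
\[
\phi_{n}(\mathbf{x})=a_{n}(\mathbf{x})\,\frac{g(\mathbf{x})^{n}}{\sqrt{n!}},
\]
which is measurable, and observe that the defining formula (\ref{InnerProductMTMF}) rewrites as $\rho_{\mathcal{F}}(f^{B},f^{B})=\sum_{n\in B}\int_{\Re^{p}}\phi_{n}^{2}\,\mu_{p}(d\mathbf{x})=\sum_{n\in B}\|\phi_{n}\|_{L^{2}(\Re^{p})}^{2}$. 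Thus finiteness of the norm is exactly the statement that $(\phi_{n})_{n\in B}$ lies in the $\ell^{2}$-direct sum $\mathcal{H}_{B}:=\bigoplus_{n\in B}L^{2}(\Re^{p},\mu_{p})$, and more generally $\rho_{\mathcal{F}}(f_{1}^{B},f_{2}^{B})=\langle(\phi_{n,1}),(\phi_{n,2})\rangle_{\mathcal{H}_{B}}$.

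The first step is to check that the assignment $\Phi:f^{B}\mapsto(\phi_{n})_{n\in B}$ is linear and isometric. Isometry is immediate from the inner-product identity above. Linearity follows from (\ref{LinearFormMTMF}): the representation produced there for $af_{1}^{B}+bf_{2}^{B}$ has $n$-th product $g_{n}g^{n}=a\,a_{n,1}g_{1}^{n}+b\,a_{n,2}g_{2}^{n}$, so $\Phi(af_{1}^{B}+bf_{2}^{B})_{n}=(a\,a_{n,1}g_{1}^{n}+b\,a_{n,2}g_{2}^{n})/\sqrt{n!}=a\,\Phi(f_{1}^{B})_{n}+b\,\Phi(f_{2}^{B})_{n}$, regardless of the non-uniqueness of the chosen representations.

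The second step is the standard completeness argument through the isometry. Since each $L^{2}(\Re^{p},\mu_{p})$ is complete, the $\ell^{2}$-sum $\mathcal{H}_{B}$ is a Hilbert space. Given a Cauchy sequence $(f_{k}^{B})$ in $\mathcal{F}_{\Re}^{\Re^{p}}$, the images $(\Phi f_{k}^{B})$ form a Cauchy sequence in $\mathcal{H}_{B}$ and hence converge to some $\psi=(\psi_{n})_{n\in B}\in\mathcal{H}_{B}$. It remains to produce an MTMF $f^{B}$ with $\Phi(f^{B})=\psi$; then isometry gives $\|f_{k}^{B}-f^{B}\|_{\rho_{\mathcal{F}}}=\|\Phi f_{k}^{B}-\psi\|_{\mathcal{H}_{B}}\to0$.

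The crux --- and the step I expect to require the most care --- is this surjectivity of $\Phi$ onto $\mathcal{H}_{B}$, i.e.\ showing the limit actually sits inside $\mathcal{F}_{\Re}^{\Re^{p}}$. Here I would use the freedom in the representation afforded by Example \ref{MTMFEx}: choose $g\equiv1$ and $a_{n}:=\sqrt{n!}\,\psi_{n}$, which are measurable and satisfy $\phi_{n}=a_{n}g^{n}/\sqrt{n!}=\psi_{n}$, so $\Phi(f^{B})=\psi$ by construction. One must still verify the MTMF pointwise-finiteness requirement $f^{B}(\mathbf{x})=\sum_{n\in B}\psi_{n}(\mathbf{x})/\sqrt{n!}<+\infty$: since $\sum_{n\in B}\int\psi_{n}^{2}\,\mu_{p}(d\mathbf{x})<+\infty$, Tonelli gives $\sum_{n\in B}\psi_{n}(\mathbf{x})^{2}<+\infty$ for $\mu_{p}$-a.e.\ $\mathbf{x}$, and then Cauchy--Schwarz against $\sum_{n\in B}1/n!\le e$ shows the series defining $f^{B}$ converges absolutely a.e. Modifying $f^{B}$ on the exceptional null set yields a genuine element of $\mathcal{F}_{\Re}^{\Re^{p}}$ that is the required limit, completing the proof. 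The only delicate point beyond this is that $\rho_{\mathcal{F}}$ is defined on represented objects rather than bare functions, so the argument is properly read as completeness of the space of such represented elements under the a.e.\ identification inherited from $\mathcal{H}_{B}$.
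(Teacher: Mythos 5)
Your proposal is correct, and it takes a genuinely different route from the paper's. The paper argues directly: given a $\rho _{\mathcal{F}}$-Cauchy sequence $f_{k}^{B}$, it \emph{posits} pointwise convergence of the coefficient functions $a_{n,k}\rightarrow a_{n}$ and $g_{k}\rightarrow g$, defines the pointwise limit $v$, passes the limit $\lim_{k}$ through $\rho _{\mathcal{F}}(v_{m},\cdot )$ to conclude $\left\Vert v_{m}-v\right\Vert _{\rho _{\mathcal{F}}}\leq \varepsilon $, and finally interchanges limit and sum to place $v$ in $\mathcal{F}_{\Re }^{\Re ^{p}}$. You instead linearize the whole problem: the map $\Phi (f^{B})=\left( a_{n}g^{n}/\sqrt{n!}\right) _{n\in B}$ is, by the identity you extract from (\ref{InnerProductMTMF}), a linear isometry of the finite-norm represented elements onto $\bigoplus_{n\in B}L^{2}(\Re ^{p},\mu _{p})$; linearity is the termwise identity built into (\ref{LinearFormMTMF}), surjectivity follows from your $g\equiv 1$, $a_{n}=\sqrt{n!}\,\psi _{n}$ construction together with the Tonelli/Cauchy--Schwarz step (using $\sum_{n}1/n!\leq e$) that secures the pointwise-finiteness clause of Definition \ref{MTMFDef} after an a.e.\ modification, and completeness then falls out of Riesz--Fischer for each $L^{2}$ summand. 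What your route buys is rigor precisely where the paper's proof is thin: norm-Cauchyness does \emph{not} by itself yield pointwise convergence of the $a_{n,k}$ and $g_{k}$ (the paper assumes it without argument), and the paper's interchange of $\lim_{k}$ with $\rho _{\mathcal{F}}$ presupposes a continuity of the inner product on a space whose completeness is the very thing being proved; your isometric identification avoids both steps and, as a bonus, makes the subsequent Hilbert-space theorem immediate, since it exhibits the space as (isometric to) a concrete Hilbert sum. What the paper's route buys is brevity and no explicit confrontation with representation-dependence --- but that issue is present in both treatments, since $\rho _{\mathcal{F}}$ is defined through the data $(g,\mathbf{a})$ rather than through the bare function $f^{B}$ (distinct representations of the same function generally yield different sequences $\left( a_{n}g^{n}\right) $ and hence different norms); you flag this honestly, and your reading of the lemma at the level of represented elements, modulo the a.e.\ identification inherited from the $L^{2}$ summands, is in fact the only reading under which the paper's inner product is well defined at all.
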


\begin{proof}
The proof follows similar steps as in the standard proof of completeness for
$\mathcal{L}^{p}$-spaces (see \cite{Dudley2004}, Theorem 5.2.1). Consider a
sequence of MTMFs $v_{n}=f_{n}^{B}\in \mathcal{F}_{\Re }^{\Re ^{p}},$ for $%
B\in \mathcal{B}(\mathbb{N}),$ with%
\begin{equation*}
v_{n}(\mathbf{x})=\sum\limits_{k\in B}a_{k,n}(\mathbf{x})\frac{g_{n}(\mathbf{%
x})^{k}}{k!},
\end{equation*}%
for some measurable functions $a_{k,n}(\mathbf{x})$ and $g_{n}(\mathbf{x}),$
and assume that $v_{n}(\mathbf{x})$ is Cauchy in $\mathcal{F}_{\Re }^{\Re
^{p}}$ with respect to $\left\Vert .\right\Vert _{\rho _{\mathcal{F}}}$,
i.e., $\forall \varepsilon >0,$ $\exists N>0,$ such that, $\forall m,n>N,$
we have%
\begin{equation*}
\left\Vert v_{m}-v_{n}\right\Vert _{\rho _{\mathcal{F}}}<\varepsilon .
\end{equation*}%
Because in any metric space a Cauchy sequence with a convergent subsequence
is convergent to the same limit, it is enough to prove convergence of a
subsequence. Therefore, we can assume that%
\begin{equation*}
\left\Vert v_{m}-v_{n}\right\Vert _{\rho _{\mathcal{F}}}<\frac{1}{2^{n}},
\end{equation*}%
for all $n\in \mathbb{N}$ and $m>n,$ where%
\begin{equation*}
v_{m}-v_{n}=\sum\limits_{k\in B}\int\limits_{\Re ^{p}}\frac{1}{k!}\left[
a_{k,m}(\mathbf{x})g_{m}(\mathbf{x})^{k}-a_{k,n}(\mathbf{x})g_{n}(\mathbf{x}%
)^{k}\right] \mu _{p}(d\mathbf{x}),
\end{equation*}%
and using (\ref{Conjecture4Functions}) we write%
\begin{equation*}
a_{k,m}(\mathbf{x})g_{m}(\mathbf{x})^{k}-a_{k,n}(\mathbf{x})g_{n}(\mathbf{x}%
)^{k}=c_{k,m,n}(\mathbf{x})g_{m,n}(\mathbf{x})^{k},
\end{equation*}%
for some measurable functions $c_{k,m,n}$ and $g_{m,n}$, which leads to%
\begin{equation*}
\left\Vert v_{m}-v_{n}\right\Vert _{\rho _{\mathcal{F}}}^{2}=\int\limits_{%
\Re ^{p}}\sum\limits_{k\in B}c_{k,m,n}(\mathbf{x})^{2}\frac{g_{m,n}(\mathbf{x%
})^{2k}}{k!}\mu _{p}(d\mathbf{x})<\frac{1}{2^{n}}.
\end{equation*}%
Let%
\begin{equation*}
A_{n}=\left\{ \mathbf{x}:\sum\limits_{k\in B}c_{k,n+1,n}(\mathbf{x})^{2}%
\frac{g_{n+1,n}(\mathbf{x})^{2k}}{k!}\geq 1/n^{2}\right\} ,
\end{equation*}%
which yields the inequality%
\begin{equation*}
\chi _{A_{n}}(\mathbf{x})/n^{2}\leq \sum\limits_{k\in B}c_{k,n+1,n}(\mathbf{x%
})^{2}\frac{g_{n+1,n}(\mathbf{x})^{2k}}{k!},
\end{equation*}%
where $\chi _{A_{n}}(\mathbf{x})$ the characteristic function of $A_{n}$,
and integrating the latter with respect to $\mu _{p}$ gives%
\begin{equation*}
\mu _{p}(A_{n})/n^{2}\leq \int\limits_{\Re ^{p}}\sum\limits_{k\in
B}c_{k,n+1,n}(\mathbf{x})^{2}\frac{g_{n+1,n}(\mathbf{x})^{2k}}{k!}\mu _{p}(d%
\mathbf{x})<\frac{1}{2^{n}},
\end{equation*}%
for all $n\in \mathbb{N},$ with%
\begin{equation*}
\sum\limits_{n=0}^{+\infty }\mu _{p}(A_{n})\leq \sum\limits_{n=0}^{+\infty }%
\frac{n^{2}}{2^{n}}<+\infty .
\end{equation*}%
Now define $B_{n}=\bigcup\limits_{m\geq n}A_{m},$ so that $B_{n}$ is a
decreasing sequence with $\mu _{p}(B_{n})\rightarrow 0,$ as $n\rightarrow
\infty .$ For any $\mathbf{x}\notin \bigcap\limits_{n=1}^{+\infty }B_{n},$
and for almost all $\mathbf{x}$ and for all large enough $n$, we have%
\begin{equation*}
\sum\limits_{k\in B}c_{k,n+1,n}(\mathbf{x})^{2}\frac{g_{n+1,n}(\mathbf{x}%
)^{2k}}{k!}\mu _{p}(d\mathbf{x})\leq \frac{1}{n^{2}},
\end{equation*}%
and consequently%
\begin{equation*}
\sum\limits_{k\in B}c_{k,m,n}(\mathbf{x})^{2}\frac{g_{m,n}(\mathbf{x})^{2k}}{%
k!}\mu _{p}(d\mathbf{x})\leq \sum\limits_{j=n}^{+\infty }\frac{1}{j^{2}}.
\end{equation*}
for any $m>n.$ Since $\sum\limits_{j=1}^{+\infty }1/j^{2}$ converges, $%
\sum\limits_{j=n}^{+\infty }1/j^{2}\rightarrow 0,$ as $n\rightarrow \infty .$
Thus for such $\mathbf{x},$ $\{v_{n}(\mathbf{x})\}$ is a Cauchy sequence
that is convergent pointwise, with some limit, say $v(\mathbf{x})$. For all
other $\mathbf{x}$, forming a set of measure $0$, let $v(\mathbf{x})=0$.
Therefore $v$ is a measurable function, so that it assumes, at the very
least, a trivial MTMF representation, i.e., $v\in \mathcal{F}_{\Re }^{\Re
^{p}}$. Because a Cauchy sequence is bounded, by Fatou's Lemma (see \cite%
{Dudley2004}, Lemma 4.3.3), we have%
\begin{eqnarray*}
\left\Vert v\right\Vert _{\rho _{\mathcal{F}}}^{2} &=&\left\Vert \underset{%
n\rightarrow \infty }{\lim \inf }v_{n}\right\Vert _{\rho _{\mathcal{F}%
}}^{2}=\int\limits_{\Re ^{p}}\underset{n\rightarrow \infty }{\lim \inf }%
\left( \sum\limits_{k\in B}a_{k,n}(\mathbf{x})^{2}\frac{g_{n}(\mathbf{x}%
)^{2k}}{k!}\right) \mu _{p}(d\mathbf{x}) \\
&\leq &\underset{n\rightarrow \infty }{\lim \inf }\int\limits_{\Re
^{p}}\sum\limits_{k\in B}a_{k,n}(\mathbf{x})^{2}\frac{g_{n}(\mathbf{x})^{2k}%
}{k!}\mu _{p}(d\mathbf{x})=\underset{n\rightarrow \infty }{\lim \inf }%
\left\Vert v_{n}\right\Vert _{\rho _{\mathcal{F}}}^{2}<+\infty ,
\end{eqnarray*}%
so that $\left\Vert v\right\Vert _{\rho _{\mathcal{F}}}<+\infty .$ Finally,
to show that $\left\Vert v_{n}-v\right\Vert _{\rho _{\mathcal{F}%
}}\rightarrow 0,$ we write%
\begin{eqnarray*}
\left\Vert v_{n}-v\right\Vert _{\rho _{\mathcal{F}}}^{2} &\leq &\underset{%
m\rightarrow \infty }{\lim \inf }\left\Vert v_{n}-v_{m}\right\Vert _{\rho _{%
\mathcal{F}}}^{2} \\
&\leq &\underset{m\rightarrow \infty }{\lim \inf }\int\limits_{\Re
^{p}}\sum\limits_{k\in B}c_{k,m,n}(\mathbf{x})^{2}\frac{g_{m,n}(\mathbf{x}%
)^{2k}}{k!}\mu _{p}(d\mathbf{x})\rightarrow 0,
\end{eqnarray*}
as $n\rightarrow \infty ,$ as desired.
\end{proof}

Combining all the results above gives the following.

\begin{theorem}[Hilbert space]
The space of multivariate Taylor measure functions $\mathcal{F}_{\Re }^{\Re
^{p}},$ equipped with the inner product $\rho _{\mathcal{F}}\left(
.,.\right) ,$ is a Hilbert space.
\end{theorem}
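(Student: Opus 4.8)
The plan is to recognize that this theorem is a direct corollary of the two preceding lemmas, assembled through the standard definition of a Hilbert space. Recall that a \emph{Hilbert space} is precisely a real inner product space that is complete with respect to the metric induced by its inner product. Thus the proof reduces to verifying that both ingredients are already in hand for $\mathcal{F}_{\Re}^{\Re^p}$, and I would open the argument by making explicit that this is the sense in which the term is used, so that no further property (separability, for instance, which is not part of the definition) need be checked.

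First I would invoke Lemma~\ref{InnerProductMTMELemma}, which establishes that the map $\rho_{\mathcal{F}}$ defined in (\ref{InnerProductMTMF}) is a genuine inner product on $\mathcal{F}_{\Re}^{\Re^p}$: it is symmetric, positive definite on nonzero elements, and linear in the first argument, hence bilinear by symmetry. This equips the space with the induced norm $\left\Vert \cdot\right\Vert _{\rho_{\mathcal{F}}}$ of (\ref{MTMNorm}) and the associated metric $d_{\mathcal{F}}$ of (\ref{MTMDistance}), so that $\left(\mathcal{F}_{\Re}^{\Re^p}, d_{\mathcal{F}}\right)$ is a normed vector space, and hence a metric space.

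Next I would invoke the completeness lemma proved immediately above, which shows that every Cauchy sequence in $\left(\mathcal{F}_{\Re}^{\Re^p}, \left\Vert \cdot\right\Vert _{\rho_{\mathcal{F}}}\right)$ converges to a limit that again lies in $\mathcal{F}_{\Re}^{\Re^p}$. The crucial closure point is that the pointwise limit of the MTMF representations is itself an MTMF for the fixed index set $B$; this is exactly where the observation of Example~\ref{MTMFEx}, that $\mathcal{F}_{\Re}^{\Re^p}$ is closed under pointwise limits, does the work. Combining completeness with the inner-product structure yields that $\mathcal{F}_{\Re}^{\Re^p}$ is a complete inner product space, i.e., a Hilbert space.

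Since the theorem is merely an assembly of results established earlier, there is no genuine obstacle remaining at this stage; the real mathematical content, and the only delicate step, resides in the completeness lemma, where one confirms that the limiting coefficient functions $a_n$ and the limiting base function $g$ produced by the Cauchy sequence still define a valid element of $\mathcal{F}_{\Re}^{\Re^p}$. I would therefore keep the proof of the theorem itself short, simply citing the two lemmas and the definition of a Hilbert space to conclude.
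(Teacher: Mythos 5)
Your proposal is correct and follows essentially the same route as the paper, which offers no separate argument for this theorem beyond the remark that it results from ``combining all the results above'' --- namely Lemma~\ref{InnerProductMTMELemma} (that $\rho_{\mathcal{F}}$ is an inner product) and the completeness lemma immediately preceding the theorem. Your additional observations --- that a Hilbert space requires nothing beyond a complete inner product space, and that the only delicate content lives in the completeness lemma's verification that the limit is again an MTMF --- accurately locate where the mathematical work resides.
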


Since $\mathcal{F}_{\Re }^{\Re ^{p}}$ is Hilbert, we can immediately obtain
results based on all the general theory on Hilbert spaces in the literature.
For example, $\mathcal{F}_{\Re }^{\Re ^{p}}$ equipped with the norm $%
\left\Vert .\right\Vert _{\rho _{\mathcal{F}}}$ is a closed, Banach space,
as well as a Hausdorff space, and following \cite{Dudley2004}, Theorems
5.4.7, 5.4.9 and Corollary 5.4.10, every Hilbert space has an orthonormal
basis. In particular, let $\mathcal{E}=\{e_{i}\}_{i\in I}$ denote an
orthonormal basis of $\mathcal{F}_{\Re }^{\Re ^{p}},$ where the index set $I$
is not necessarily countable, so that for any $f^{B}\in \mathcal{F}_{\Re
}^{\Re ^{p}},$ we have the reproducing formula%
\begin{equation}
f^{B}(\mathbf{x})=\sum_{i\in I}\rho _{\mathcal{F}}\left( f^{B},e_{i}\right)
e_{i}(\mathbf{x}),  \label{ReproducingHilbertTmeasure}
\end{equation}%
for any $B\in \mathcal{B}(\mathbb{N}).$ Clearly, since $\mathcal{E}$ is not
unique, the latter representation of the MTMF $f^{B}$ is not unique.

The following lemma provides additional insight to the topological structure
of the space $\mathcal{F}_{\Re }^{\Re ^{p}}.$

\begin{lemma}
\label{AlgebraFuncs}The space $\mathcal{F}_{\Re }^{\Re ^{p}}$\ is an algebra
of functions that includes constant functions and separates points, i.e., it
is a vector space of functions that is also closed under pointwise
multiplication, and for each $\mathbf{x}\neq \mathbf{y}$ there is a function
$f^{B}\in \mathcal{F}_{\Re }^{\Re ^{p}},$ with $f^{B}(\mathbf{x})\neq f^{B}(%
\mathbf{y})$, for some $B\in \mathcal{B}(\mathbb{N}).$
\end{lemma}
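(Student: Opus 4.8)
The plan is to lean on the identification, already observed immediately after Definition \ref{MTMFDef} and in Example \ref{MTMFEx}, that $\mathcal{F}_{\Re }^{\Re ^{p}}$ is nothing but the collection of all measurable real-valued functions on $\Re ^{p}$. On one hand every MTMF is measurable, being a countable sum of products of measurable functions; on the other hand every measurable $f$ belongs to $\mathcal{F}_{\Re }^{\Re ^{p}}$ via the trivial representation (\ref{MTFsinglefunction}), taking $B=\{1\}$, $g=f$, and $\mathbf{a}(\mathbf{x})=[0,1,0,\dots ]$. Once this identification is recorded, each of the three claimed properties reduces to a standard closure property of measurable functions, with essentially no computation.

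First I would dispose of the algebra structure. The vector-space property is already in hand, since $\mathcal{F}_{\Re }^{\Re ^{p}}$ was shown to be a Hilbert space; it also follows at once because linear combinations of measurable functions are measurable. For closure under pointwise multiplication, given $f_{1}^{B},f_{2}^{B}\in \mathcal{F}_{\Re }^{\Re ^{p}}$ the product $f_{1}^{B}f_{2}^{B}$ is again measurable, hence by the trivial representation it lies in $\mathcal{F}_{\Re }^{\Re ^{p}}$ (now with index set $\{1\}$). This is legitimate because membership in $\mathcal{F}_{\Re }^{\Re ^{p}}$ only requires the existence of \emph{some} $B\in \mathcal{B}(\mathbb{N})$.

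Next, constant functions are measurable and therefore lie in the space; explicitly a constant $c$ is produced by (\ref{MTFsinglefunction}) with $g\equiv c$, or by the canonical simple-function construction of Example \ref{MTMFEx} with a single cell $C_{0}=\Re ^{p}$. For separation of points, I would fix $\mathbf{x}\neq \mathbf{y}$ in $\Re ^{p}$; they differ in at least one coordinate $i$, so the coordinate projection $f(\mathbf{z})=z_{i}$ is continuous, hence measurable, hence an MTMF, and it satisfies $f(\mathbf{x})=x_{i}\neq y_{i}=f(\mathbf{y})$, which is exactly the separation requirement.

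The only step that superficially looks like an obstacle is closure under multiplication: a direct approach would try to rewrite the Cauchy-type product of the two defining series as a single series $\sum_{m}c_{m}(\mathbf{x})\,h(\mathbf{x})^{m}/m!$, possibly invoking the combining identity (\ref{ConjLFuncs}), and this is genuinely awkward to organize. The identification with measurable functions dissolves the difficulty: since the space coincides with all measurable functions, any operation preserving measurability automatically preserves membership. Thus the real content of the proof is simply to state the measurable-function characterization carefully, after which the algebra, constants, and point-separation claims follow immediately.
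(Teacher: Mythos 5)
Your proof is correct, but it takes a different route from the paper's on the one step with real content, namely closure under multiplication. The paper never invokes the global identification of $\mathcal{F}_{\Re }^{\Re ^{p}}$ with the measurable functions for this step; instead it performs a representation-level manipulation, absorbing one factor into the coefficient sequence of the other:
\begin{equation*}
f_{1}^{B}(\mathbf{x})f_{2}^{B}(\mathbf{x})=\sum\limits_{n\in B}\left(
a_{n,1}(\mathbf{x})f_{2}^{B}(\mathbf{x})\right) \frac{g_{1}(\mathbf{x})^{n}}{
n!},
\end{equation*}
which exhibits the product as an MTMF with the \emph{same} index set $B$ and the same base function $g_{1}$, only with new (still measurable) coefficients $a_{n,1}f_{2}^{B}$. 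Your argument instead passes to the trivial representation (\ref{MTFsinglefunction}) with $B=\{1\}$, which is legitimate since membership only requires some representation, and which you correctly justify. What each approach buys: yours makes transparent that, as a set, $\mathcal{F}_{\Re }^{\Re ^{p}}$ coincides with all measurable functions (a fact the paper itself records right after Definition \ref{MTMFDef}), so the entire lemma collapses to standard closure properties of measurability — arguably the most honest reading of the statement. The paper's manipulation, by contrast, is representation-aware: it keeps the index set $B$ and the base function fixed, which matters in this paper because the inner product (\ref{InnerProductMTMF}) and the surrounding structure are defined through the representation $(B,\mathbf{a},g)$ rather than through the function values alone; a proof that silently swaps $B$ for $\{1\}$ forgets that structure, even though the lemma as stated does not require preserving it. Your constant-function and point-separation arguments are essentially the paper's (the paper uses $B=\{1\}$, $g\equiv 1$, $\mathbf{a}=[0,c,0,\dots ]$ for constants, and an injective measurable $g$ for separation, where your per-pair coordinate projection $\mathbf{z}\mapsto z_{i}$ is a simpler and equally valid choice, since separation only needs to be witnessed pairwise).
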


\begin{proof}
Let $f_{1}^{B},f_{2}^{B}\in \mathcal{F}_{\Re }^{\Re ^{p}},$ for some $B\in
\mathcal{B}(\mathbb{N}),$ with%
\begin{equation*}
f_{k}^{B}(\mathbf{x})=\sum\limits_{n\in B}a_{n,k}(\mathbf{x})\frac{g_{k}(%
\mathbf{x})^{n}}{n!},
\end{equation*}%
for measurable functions $a_{n,k},$ $g_{k},$ $k=1,2$, $n\in \mathbb{N}.$
Since $\mathcal{F}_{\Re }^{\Re ^{p}}$ is a Hilbert space, it is a vector
space (see equation (\ref{LinearFormMTMF})). Moreover, we can write%
\begin{equation*}
f_{1}^{B}(\mathbf{x})f_{2}^{B}(\mathbf{x})=\left( \sum\limits_{n\in
B}a_{n,1}(\mathbf{x})\frac{g_{1}(\mathbf{x})^{n}}{n!}\right) f_{2}^{B}(%
\mathbf{x})=\sum\limits_{n\in B}\left( a_{n,1}(\mathbf{x})f_{2}^{B}(\mathbf{x%
})\right) \frac{g_{1}(\mathbf{x})^{n}}{n!}\in \mathcal{F}_{\Re }^{\Re ^{p}},
\end{equation*}%
for any $\mathbf{x}\in \Re ^{p}$, where $a_{n,1}(\mathbf{x})f_{2}^{B}(%
\mathbf{x})$ is a measurable function, so that $\mathcal{F}_{\Re }^{\Re
^{p}} $ is closed under pointwise multiplication. Clearly, for any constant $%
c\in \Re ,$ taking $B=\{1\},$ $g(\mathbf{x})=1,$ and $\mathbf{a}(\mathbf{x}%
)=[0,c,0,...],$ we have $c=T_{g(\mathbf{x}),\mathbf{a}(\mathbf{x})}(B)\in
\mathcal{F}_{\Re }^{\Re ^{p}}.$ Now take arbitrary $\mathbf{x},\mathbf{y}\in
\Re ^{p}$, with $\mathbf{x}\neq \mathbf{y},$ and choose any 1 to 1
measurable function $g,$ take $B=\{1\},$ and $\mathbf{a}(\mathbf{x}%
)=[0,1,0,...],$ so that $g(\mathbf{x})=T_{g(\mathbf{x}),\mathbf{a}(\mathbf{x}%
)}(B)\in \mathcal{F}_{\Re }^{\Re ^{p}}$, is such that $g(\mathbf{x})\neq g(%
\mathbf{y}),$ and therefore, $\mathcal{F}_{\Re }^{\Re ^{p}}$ separates
points.
\end{proof}

Now let $\mathcal{C}_{\Re }^{\Re ^{p}}$ denote all continuous, multivariate,
real-valued functions, and $\mathcal{M}_{\Re }^{\Re ^{p}}$ the collection of
all multivariate, real-valued, measurable functions. In general, a
measurable function in $\mathcal{M}_{\Re }^{\Re ^{p}}$ is not necessarily
continuous (e.g., indicator functions of measurable rectangles), however any
continuous function is measurable, i.e., $\mathcal{C}_{\Re }^{\Re
^{p}}\subset \mathcal{M}_{\Re }^{\Re ^{p}}\subset \mathcal{F}_{\Re }^{\Re
^{p}}.$ Finding dense subsets in spaces of functions is a crucial, well
known and studied problem for spaces of continuous functions. For example,
application of classic theorems from functional analysis such as the
Stone-Weierstrass theorem (see \cite{Dudley2004}, Theorem 2.4.11), allows us
to verify if a subset is dense, using $d_{sup}(f,g)=\underset{\mathbf{x}\in K%
}{\sup }|f(\mathbf{x})-g(\mathbf{x})|$ norm, but they require a compact
space $K\subset \Re ^{p},$ i.e., the function space $\mathcal{C}_{\Re }^{K}$%
, or apply modifications of Stone-Weierstrass requiring locally compact
spaces.

Since $\mathcal{C}_{\Re }^{\Re ^{p}}\subset \mathcal{F}_{\Re }^{\Re ^{p}}$,
the proposed function space is much larger, and we cannot utilize such a
result here, which requires properties similar to Lemma \ref{AlgebraFuncs}.
Moreover, the space of functions $\mathcal{F}_{\Re }^{\Re ^{p}}$ satisfies
almost all requirements ($\Re ^{p}$ is Hausdorff), but $\Re ^{p}$ is not
compact. Instead, we will show directly the existence of a countable dense
subset\ of functions from $\mathcal{F}_{\Re }^{\Re ^{p}},$ utilizing
properties of measurable functions, so that a subspace of $\mathcal{F}_{\Re
}^{\Re ^{p}}$ becomes a Polish space.

Up to this point we have imposed and utilized the integrability conditions
of Remark \ref{CondRemarks} on the functions $a_{n}$ and $g,$ but as we see
next, we need additional specific conditions in order to prove that the
resulting restricted subspace of $\mathcal{F}_{\Re }^{\Re ^{p}}$\ is Polish.

\begin{theorem}[Polish Space]
\label{TaylorPolishMTMF}The space of multivariate Taylor measure functions $%
\mathcal{F}_{\Re }^{\Re ^{p}},$ \newline
equipped with the induced norm $\left\Vert .\right\Vert _{\rho _{\mathcal{F}%
}},$ restricted to $f^{B}\in T_{g(\mathbf{x}),\mathbf{a}(\mathbf{x})}(B)\in
\mathcal{F}_{\Re }^{\Re ^{p}},$ $B\in \mathcal{B}(\mathbb{N}),$ where the
integrals%
\begin{equation}
\int\limits_{\Re ^{p}}\left[ g^{n}(\mathbf{x})-a_{n}(\mathbf{x})+c\right]
^{2}\mu _{p}(d\mathbf{x})<+\infty ,  \label{IntegrabilityCondsPolish}
\end{equation}%
are defined and are finite, for all $n\in B,$ and any $c>0$, is a Polish
space.
\end{theorem}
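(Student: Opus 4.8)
The plan is to show the two defining properties of a Polish space separately: that the restricted space is a complete metric space (which follows from earlier results), and that it is separable, i.e.\ admits a countable dense subset. Completeness is essentially already in hand, since we have shown $\mathcal{F}_{\Re}^{\Re^{p}}$ is a Hilbert space under $\rho_{\mathcal{F}}$; the only point to verify is that the integrability condition in (\ref{IntegrabilityCondsPolish}) is preserved under the limiting operations, so that the restricted subspace is closed in $\mathcal{F}_{\Re}^{\Re^{p}}$ and hence complete in its own right. The burden of the proof therefore falls on separability.

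First I would reduce the separability question to a question about $L^{2}(\Re^{p},\mu_{p})$. The condition (\ref{IntegrabilityCondsPolish}) says precisely that for each $n\in B$ the function $h_{n}(\mathbf{x})=g^{n}(\mathbf{x})-a_{n}(\mathbf{x})+c$ lies in $L^{2}(\Re^{p})$, and since $a_{n}$ and $g^{n}$ appear inside the norm $\|f^{B}\|_{\rho_{\mathcal{F}}}^{2}=\sum_{n\in B}\int a_{n}^{2}g^{2n}/n!\,\mu_{p}(d\mathbf{x})$, the natural move is to view a restricted MTMF as encoded by the countable family of its component functions. The key fact I would invoke is that $L^{2}(\Re^{p},\mu_{p})$ is itself separable, a standard result: the simple functions with rational values supported on finite unions of rational rectangles are dense in $L^{2}$, and this dense set is countable. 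I would then build a candidate countable dense subset of the restricted space by taking MTMFs whose $a_{n}$ and $g$ are drawn from such a fixed countable dense family in $L^{2}$, with $B$ ranging over the countable collection $\mathcal{B}(\mathbb{N})$ of Borel subsets of $\mathbb{N}$ that are determined by finitely many points (or more carefully, restricting to finite $B$ first and then handling countable $B$ by truncation).

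The hard part will be controlling the approximation in the $\rho_{\mathcal{F}}$ norm rather than in an ordinary $L^{2}$ norm, because the norm couples $a_{n}$ and $g$ multiplicatively through the term $a_{n}^{2}g^{2n}$, and an arbitrarily good $L^{2}$-approximation of $a_{n}$ and of $g$ separately need not give a good approximation of the product $a_{n}g^{n}$ unless one has some uniform integrability or boundedness control. My plan to handle this is to exploit the tail decay supplied by the $1/n!$ factor: for a fixed target $f^{B}$ and tolerance $\varepsilon$, I would first choose a finite $B_{0}\subset B$ so that the tail $\sum_{n\in B\setminus B_{0}}\int a_{n}^{2}g^{2n}/n!\,\mu_{p}(d\mathbf{x})<\varepsilon/2$, using that the full sum is finite, and then approximate only the finitely many products $a_{n}g^{n}$, $n\in B_{0}$, each in $L^{2}$ to within $\varepsilon/(2|B_{0}|)$ by a member of the countable dense family. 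Because only finitely many terms now matter, the multiplicative coupling is no longer an obstruction: I can approximate each product $a_{n}(\mathbf{x})g(\mathbf{x})^{n}$ as a single $L^{2}$ function and absorb it into the chosen $a_{n}$ with $g\equiv 1$, invoking Conjecture~\ref{ConjFuncs} (the representation $\sum_{i}a_{n,i}b_{i}^{n}=s_{n}s^{n}$) to keep the result inside $\mathcal{F}_{\Re}^{\Re^{p}}$ in the required $T_{g,\mathbf{a}}$ form.

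Assembling these pieces, I would conclude that the countable family just constructed is dense in the restricted space under $\|\cdot\|_{\rho_{\mathcal{F}}}$, so the restricted space is separable; combined with completeness this shows it is a complete separable metric space, i.e.\ Polish. The one technical subtlety I would flag is the status of the constant $c>0$ and of the $\mathbf{x}$-dependence of $g$ in condition (\ref{IntegrabilityCondsPolish}): I would make sure the chosen countable family actually satisfies (\ref{IntegrabilityCondsPolish}) itself, so that it lies inside the restricted space, and I would note that since the condition must hold \emph{for all} $n\in B$ and any $c>0$, the finite-$B_{0}$ truncation step interacts with it harmlessly, as the truncated approximants trivially inherit finiteness of the integral on the retained indices.
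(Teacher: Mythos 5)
Your proposal is sound in outline but takes a genuinely different route from the paper's proof. The paper never passes through the separability of $L^{2}(\Re ^{p},\mu _{p})$: it fixes the target $f^{B}=T_{g(\mathbf{x}),\mathbf{a}(\mathbf{x})}(B)$, takes monotone sequences of canonical simple functions $a_{n,k}\rightarrow a_{n}$ and $g_{k}\rightarrow g$, replaces their real coefficients by rationals to form the functions $h_{j,k}^{B}$ of equation (\ref{HSimplefunction}), and then bounds $\left\Vert f^{B}-h_{j,k}^{B}\right\Vert _{\rho _{\mathcal{F}}}^{2}$ directly, arriving at the bound $\varepsilon ^{2}\sum_{n\in B}\frac{1}{n!}\int_{\Re ^{p}}\left[ g^{n}(\mathbf{x})-a_{n}(\mathbf{x})+\varepsilon \right] ^{2}\mu _{p}(d\mathbf{x})$, which is exactly where hypothesis (\ref{IntegrabilityCondsPolish}) is consumed. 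That is, the paper approximates $a_{n}$ and $g$ \emph{separately} and uses (\ref{IntegrabilityCondsPolish}) precisely to tame the multiplicative coupling $a_{n}g^{n}$ that you correctly identified as the central difficulty; you sidestep the coupling entirely by truncating to a finite $B_{0}$ via the $1/n!$ tail and approximating each product $a_{n}g^{n}$ as a single $L^{2}$ object with $g\equiv 1$ (note that Conjecture \ref{ConjFuncs} is then not actually needed: a common $g\equiv 1$ across $n\in B_{0}$ already puts the approximant in $T_{g,\mathbf{a}}$ form). Your route buys two things the paper's does not. First, your approximants come from a \emph{fixed} countable dense subset of $L^{2}$, whereas the paper's set $\mathcal{C}$ is built from partitions $C_{n,k}^{(l)},D_{k}^{(l)}$ manufactured from each target's own simple-function approximants, which range over arbitrary Borel sets, so the paper's claim that $\mathcal{C}$ is countable is not justified as written; your construction repairs this. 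Second, your parenthetical hedge about $B$ is the correct one: $\mathcal{B}(\mathbb{N})=2^{\mathbb{N}}$ is uncountable, so one must restrict to finite $B$ and truncate, which the paper passes over silently. You are also more careful on completeness, noting that the restricted subspace must be shown closed; the paper asserts completeness of the ambient space and moves on.

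The one step where your sketch does not close is the membership issue you yourself flagged. With $g\equiv 1$ and compactly supported simple $\tilde{a}_{n}$, the integral $\int_{\Re ^{p}}\left[ 1-\tilde{a}_{n}(\mathbf{x})+c\right] ^{2}\mu _{p}(d\mathbf{x})$ diverges, so your approximants violate (\ref{IntegrabilityCondsPolish}) and fall outside the restricted space; density must be argued within the space. But note this defect is inherited from the theorem statement itself: if the condition held for two distinct values $c\neq c^{\prime }$, the difference of the two $L^{2}$ functions $g^{n}-a_{n}+c$ and $g^{n}-a_{n}+c^{\prime }$ would be the nonzero constant $c-c^{\prime }$, which is not in $L^{2}(\Re ^{p},\mu _{p})$ since $\mu _{p}(\Re ^{p})=+\infty $; hence no pair $(g,a_{n})$ satisfies (\ref{IntegrabilityCondsPolish}) for all $c>0$ as literally stated. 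The paper's own proof uses the condition only as an assumed finite bound and never verifies that its approximants $h_{j,k}^{B}$ satisfy it either, so your gap is shared with, not worse than, the paper's; under any reasonable repair of the hypothesis (e.g., integrating over a set of finite measure, or replacing the constant $c$ by a fixed $L^{2}$ function), your argument goes through and is the cleaner of the two.
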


\begin{proof}
Since $\mathcal{F}_{\Re }^{\Re ^{p}}$ is a complete metric space, it remains
to show that there exists a countable dense subset. Take any $B\in \mathcal{B%
}(\mathbb{N}),$ and consider arbitrary $f^{B}\in \mathcal{F}_{\Re }^{\Re
^{p}},$ where%
\begin{equation*}
f^{B}(\mathbf{x})=\sum\limits_{n\in B}a_{n}(\mathbf{x})\frac{g(\mathbf{x}%
)^{n}}{n!},
\end{equation*}%
with $a_{n},$ $n\in \mathbb{N},$ and $g$ measurable functions in $(\Re ^{p},%
\mathcal{B}(\Re ^{p}))$. If $g(\mathbf{x})=0,$ for some $\mathbf{x}\in \Re
^{p}$, we have $f^{B}(\mathbf{x})=0,$ and therefore, as long as the zero
function is a member of an entertained countable dense set the claim is
satisfied. The case where $0\in B$, is handled with the convention $0^{0}=1$%
. Consequently, we will assume without loss of generality that $g(\mathbf{x}%
)\neq 0,$ for all $\mathbf{x}\in \Re ^{p}$.\newline
Since $f^{B}\in \mathcal{F}_{\Re }^{\Re ^{p}},$ there exist monotone
sequences of measurable simple functions $\{a_{n,k}\}_{k=0}^{+\infty }$ and $%
\{g_{k}\}_{k=0}^{+\infty }$ such that $\underset{k\rightarrow +\infty }{\lim
}a_{n,k}(\mathbf{x})=a_{n}(\mathbf{x}),$ and $\underset{k\rightarrow +\infty
}{\lim }g_{k}(\mathbf{x})=g(\mathbf{x}),$ for each fixed $\mathbf{x}\in \Re
^{p}$, so that $\forall \varepsilon >0,\exists k_{1}\in \mathbb{N},$ such
that $\forall k>k_{1}$%
\begin{equation}
\left\vert a_{n,k}(\mathbf{x})-a_{n}(\mathbf{x})\right\vert <\varepsilon
\Leftrightarrow a_{n}(\mathbf{x})-\varepsilon <a_{n,k}(\mathbf{x})<a_{n}(%
\mathbf{x})+\varepsilon ,  \label{Anbounds}
\end{equation}%
and since $\underset{k\rightarrow +\infty }{\lim }g_{k}^{n}(\mathbf{x}%
)=g^{n}(\mathbf{x}),$ we have that $\forall \varepsilon >0,\exists k_{2}\in
\mathbb{N},$ such that $\forall k>k_{2}$%
\begin{equation}
\left\vert g_{k}^{n}(\mathbf{x})-g^{n}(\mathbf{x})\right\vert <\varepsilon
\Leftrightarrow g^{n}(\mathbf{x})-\varepsilon <g_{k}^{n}(\mathbf{x})<g^{n}(%
\mathbf{x})+\varepsilon ,  \label{Gammabounds}
\end{equation}%
for all $n\in \mathbb{N}.$ In particular, write%
\begin{equation*}
a_{n,k}(\mathbf{x})=\sum\limits_{l\in
L_{a}^{n,k}}c_{n,k}^{(l)}I_{C_{n,k}^{(l)}}(\mathbf{x}),
\end{equation*}%
for the partition $\{C_{n,k}^{(l)}\}_{l\in L_{a}^{n,k}}$ of $\Re ^{p},$ with
$C_{n,k}^{(l)}=\{\mathbf{x}\in \Re ^{p}:\mathbf{x}%
=a_{n,k}^{-1}(c_{n,k}^{(l)})\}\in \mathcal{B}(\Re ^{p}),$ $L_{a}^{n,k}$ a
countable set, and some real constants $c_{n,k}^{(l)},$ and%
\begin{equation*}
g_{k}(\mathbf{x})=\sum\limits_{l\in L_{\gamma
}^{k}}d_{k}^{(l)}I_{D_{k}^{(l)}}(\mathbf{x}),
\end{equation*}%
for the partition $\{D_{k}^{(l)}\}_{l\in L_{g}^{k}}$ of $\Re ^{p},$ with $%
D_{k}^{(l)}=\{\mathbf{x}\in \Re ^{p}:\mathbf{x}=g_{k}^{-1}(d_{k}^{(l)})\}\in
\mathcal{B}(\Re ^{p}),$ $L_{g}^{k}$ a countable set, and some real constants
$d_{k}^{(l)}.$ Since $\mathbb{Q}$ is dense in $\Re $, we can find sequences
of rationals $\{s_{j}^{(l,k)}\}_{j=1}^{+\infty }$ and $\{s_{j}^{(n,l,k)}%
\}_{j=1}^{+\infty },$ such that $\underset{j\rightarrow +\infty }{\lim }%
s_{j}^{(l,k)}=d_{k}^{(l)},$ for all $l\in L_{\gamma }^{k},$ and $\underset{%
j\rightarrow +\infty }{\lim }s_{j}^{(n,l,k)}=c_{n,k}^{(l)},$ for all $l\in
L_{a}^{n,k},$ and all $n,k\in \mathbb{N}.$ As a result, $\forall \varepsilon
>0,$ $\exists j_{1}\in \mathbb{N},$ such that $\forall j>j_{1}$%
\begin{equation}
\left\vert s_{j}^{(l,k)}-d_{k}^{(l)}\right\vert <\varepsilon \Leftrightarrow
d_{k}^{(l)}-\varepsilon <s_{j}^{(l,k)}<d_{k}^{(l)}+\varepsilon ,
\label{DenseEq1}
\end{equation}%
and similarly, $\forall \varepsilon >0,$ $\exists j_{2}\in \mathbb{N},$ such
that $\forall j>j_{2}$%
\begin{equation}
\left\vert s_{j}^{(n,l,k)}-c_{n,k}^{(l)}\right\vert <\varepsilon
\Leftrightarrow c_{n,k}^{(l)}-\varepsilon
<s_{j}^{(n,l,k)}<c_{n,k}^{(l)}+\varepsilon .  \label{DenseEq2}
\end{equation}%
\newline
Let $\mathcal{C}=\{T_{g(\mathbf{x}),\mathbf{a}(\mathbf{x})}\}\subset
\mathcal{F}_{\Re }^{\Re ^{p}},$ the collection of all measurable functions
from $\mathcal{F}_{\Re }^{\Re ^{p}},$ where the functions $g(\mathbf{x})$
and $a_{n}(\mathbf{x})$ are the simple functions with the rational
coefficients and partitions presented above, i.e., if $h_{j,k}^{B}\in
\mathcal{C}$, we have%
\begin{equation}
h_{j,k}^{B}(\mathbf{x})=\sum\limits_{n\in B}\left[ \sum\limits_{l\in
L_{a}^{n,k}}s_{j}^{(n,l,k)}I_{C_{n,k}^{(l)}}(\mathbf{x})\right] \left[
\sum\limits_{l\in L_{\gamma }^{k}}s_{j}^{(l,k)}I_{D_{k}^{(l)}}(\mathbf{x})%
\right] ^{n}\frac{1}{n!},  \label{HSimplefunction}
\end{equation}%
$j,k\in \mathbb{N}.$ Now note that, by construction, $\mathcal{C}$\ is a
countable collection of measurable functions in $\mathcal{F}_{\Re }^{\Re
^{p}}$, and define the collection of measurable functions which are limits
of sequences of elements from $\mathcal{C}$ by%
\begin{equation}
\mathcal{C}_{\lim }=\{f^{B}:\forall \varepsilon >0,\exists j_{0},k_{0}\in
\mathbb{N},\text{ such that }\forall j>j_{0},k>k_{0}:\left\Vert
f^{B}-h_{j,k}^{B}\right\Vert _{\rho _{\mathcal{F}}}<\varepsilon
,\{h_{j,k}^{B}\}\subset \mathcal{C}\}.  \label{ConvergenceDensePointwise}
\end{equation}%
\newline
Take $j>j_{0}=\max \{j_{1},j_{2}\},$ and $k>k_{0}=\max \{k_{1},k_{2}\},$ and
using equations (\ref{DenseEq1}) and (\ref{DenseEq2}), we write%
\begin{gather*}
(f^{B}-h_{j,k}^{B})(\mathbf{x})=\sum\limits_{n\in B}\left[ a_{n}(\mathbf{x}%
)g^{n}(\mathbf{x})-\sum\limits_{l\in
L_{a}^{n,k}}s_{j}^{(n,l,k)}I_{C_{n,k}^{(l)}}(\mathbf{x})\left[
\sum\limits_{l\in L_{\gamma }^{k}}s_{j}^{(l,k)}I_{D_{k}^{(l)}}(\mathbf{x})%
\right] ^{n}\right] \frac{1}{n!} \\
<\sum\limits_{n\in B}\left[ a_{n}(\mathbf{x})g^{n}(\mathbf{x}%
)-\sum\limits_{l\in L_{a}^{n,k}}\left( c_{n,k}^{(l)}-\varepsilon \right)
I_{C_{n,k}^{(l)}}(\mathbf{x})\left[ \sum\limits_{l\in L_{\gamma }^{k}}\left(
d_{k}^{(l)}+\varepsilon \right) I_{D_{k}^{(l)}}(\mathbf{x})\right] ^{n}%
\right] \frac{1}{n!} \\
<\sum\limits_{n\in B}\left[ a_{n}(\mathbf{x})g^{n}(\mathbf{x})-a_{n,k}(%
\mathbf{x})g_{k}^{n}(\mathbf{x})\right] \frac{1}{n!}=\sum\limits_{n\in B}%
\left[ a_{n}(\mathbf{x})-a_{n,k}(\mathbf{x})\frac{g_{k}^{n}(\mathbf{x})}{%
g^{n}(\mathbf{x})}\right] \frac{g^{n}(\mathbf{x})}{n!},
\end{gather*}%
so that%
\begin{eqnarray*}
\left\Vert f^{B}-h_{j,k}^{B}\right\Vert _{\rho _{\mathcal{F}}}^{2}
&<&\sum\limits_{n\in B}\int\limits_{\Re ^{p}}\left[ a_{n}(\mathbf{x}%
)-a_{n,k}(\mathbf{x})\frac{g_{k}^{n}(\mathbf{x})}{g^{n}(\mathbf{x})}\right]
^{2}\frac{g^{2n}(\mathbf{x})}{n!}\mu _{p}(d\mathbf{x}) \\
&=&\sum\limits_{n\in B}\int\limits_{\Re ^{p}}\left[ a_{n}(\mathbf{x})g^{n}(%
\mathbf{x})-a_{n,k}(\mathbf{x})g_{k}^{n}(\mathbf{x})\right] ^{2}\frac{1}{n!}%
\mu _{p}(d\mathbf{x}).
\end{eqnarray*}%
Now using equations (\ref{Anbounds}) and (\ref{Gammabounds}), we have%
\begin{gather*}
\left\Vert f^{B}-h_{j,k}^{B}\right\Vert _{\rho _{\mathcal{F}%
}}^{2}<\sum\limits_{n\in B}\int\limits_{\Re ^{p}}\left[ a_{n}(\mathbf{x}%
)g^{n}(\mathbf{x})-\left( a_{n}(\mathbf{x})-\varepsilon \right) \left( g^{n}(%
\mathbf{x})+\varepsilon \right) \right] ^{2}\frac{1}{n!}\mu _{p}(d\mathbf{x})
\\
=\sum\limits_{n\in B}\int\limits_{\Re ^{p}}\left[ a_{n}(\mathbf{x})g^{n}(%
\mathbf{x})-a_{n}(\mathbf{x})g^{n}(\mathbf{x})-\varepsilon a_{n}(\mathbf{x}%
)+\varepsilon g^{n}(\mathbf{x})+\varepsilon ^{2}\right] ^{2}\frac{1}{n!}\mu
_{p}(d\mathbf{x}) \\
=\varepsilon ^{2}\sum\limits_{n\in B}\frac{1}{n!}\int\limits_{\Re ^{p}}\left[
g^{n}(\mathbf{x})-a_{n}(\mathbf{x})+\varepsilon \right] ^{2}\mu _{p}(d%
\mathbf{x}),
\end{gather*}%
where the latter integrals are finite by assumption. Consequently, we have%
\begin{equation*}
\left\Vert f^{B}-h_{j,k}^{B}\right\Vert _{\rho _{\mathcal{F}%
}}^{2}<\varepsilon ^{2}\sum\limits_{n\in B}\frac{1}{n!}\int\limits_{\Re ^{p}}%
\left[ g^{n}(\mathbf{x})-a_{n}(\mathbf{x})+\varepsilon \right] ^{2}\mu _{p}(d%
\mathbf{x})
\end{equation*}%
\newline
and sending $\varepsilon \rightarrow 0,$ we have that $\left\Vert
f^{B}-h_{j,k}^{B}\right\Vert _{\rho _{\mathcal{M}}}\rightarrow 0,$ as $%
j,k\rightarrow +\infty ,$ so that $f^{B}\in \mathcal{C}_{\lim }$. As a
result, the closure of the countable set $\mathcal{C}$ is such that $%
\overline{\mathcal{C}}=\mathcal{C}\cup \mathcal{C}_{\lim }=\mathcal{F}_{\Re
}^{\Re ^{p}}$, and the restricted $\mathcal{F}_{\Re }^{\Re ^{p}}$ under
conditions (\ref{IntegrabilityCondsPolish}) is separable, as desired.
\end{proof}

The following lemma, provides some insight to pointwise convergence in $%
\mathcal{F}_{\Re }^{\Re ^{p}}$, under the standard norm in $\Re $. This
result is expected for a closed set of measurable functions, but we present
it here explicitly, since it can be used to define other important concepts,
such as integration with respect to MTMFs.

\begin{lemma}
\label{PointwiseLimit}For any arbitrary $f^{B}\in \mathcal{F}_{\Re }^{\Re
^{p}},$ there exists a sequence $\{h_{j,k}^{B}\}$ of elements of $\mathcal{F}%
_{\Re }^{\Re ^{p}}$, where $h_{j,k}^{B},$ $j,k\in \mathbb{N}$, is given by
equation (\ref{HSimplefunction}), such that $\underset{j,k\rightarrow
+\infty }{\lim }h_{j,k}^{B}(\mathbf{x})=f^{B}(\mathbf{x})$, for all $\mathbf{%
x}\in \Re ^{p}$.
\end{lemma}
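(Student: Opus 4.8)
The plan is to reuse the construction from the proof of Theorem~\ref{TaylorPolishMTMF} and to verify that the associated $h_{j,k}^B$ of~(\ref{HSimplefunction}) already converge \emph{pointwise}, not merely in the $\rho_{\mathcal{F}}$-norm. First I would fix $\mathbf{x}\in\Re^p$ and use that $\{C_{n,k}^{(l)}\}_{l\in L_a^{n,k}}$ and $\{D_k^{(l)}\}_{l\in L_\gamma^k}$ are partitions of $\Re^p$: the point $\mathbf{x}$ lies in exactly one cell of each, say $C_{n,k}^{(l^*)}$ and $D_k^{(l^{**})}$ (with $l^*=l^*(n,k,\mathbf{x})$ and $l^{**}=l^{**}(k,\mathbf{x})$). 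Then every bracketed indicator sum in~(\ref{HSimplefunction}) collapses to a single rational, so that
\[
h_{j,k}^B(\mathbf{x})=\sum_{n\in B}s_j^{(n,l^*,k)}\bigl(s_j^{(l^{**},k)}\bigr)^n\frac{1}{n!},\qquad a_{n,k}(\mathbf{x})=c_{n,k}^{(l^*)},\quad g_k(\mathbf{x})=d_k^{(l^{**})}.
\]

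Next I would introduce the intermediate simple-data function $\tilde f_k^B(\mathbf{x})=\sum_{n\in B}a_{n,k}(\mathbf{x})g_k(\mathbf{x})^n/n!$ and split the error as $|h_{j,k}^B(\mathbf{x})-f^B(\mathbf{x})|\le|h_{j,k}^B(\mathbf{x})-\tilde f_k^B(\mathbf{x})|+|\tilde f_k^B(\mathbf{x})-f^B(\mathbf{x})|$. The second term depends only on $k$ and tends to $0$ as $k\to\infty$, because the standard simple-function approximation theorem gives $a_{n,k}(\mathbf{x})\to a_n(\mathbf{x})$ and $g_k(\mathbf{x})\to g(\mathbf{x})$, while $f^B(\mathbf{x})=\sum_{n\in B}a_n(\mathbf{x})g(\mathbf{x})^n/n!$ is finite by Definition~\ref{MTMFDef}. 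For the first term, the collapsed expression shows each summand is $[c_{n,k}^{(l^*)}(d_k^{(l^{**})})^n-s_j^{(n,l^*,k)}(s_j^{(l^{**},k)})^n]/n!$, which tends to $0$ as $j\to\infty$ by the rational approximations~(\ref{DenseEq1})--(\ref{DenseEq2}).

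The main obstacle is twofold: interchanging each of these limits with the sum over $n\in B$ when $B$ is infinite, and upgrading the two iterated limits to the genuine double limit $\lim_{j,k\to\infty}$ asserted in the statement. For the interchange I would exhibit a summable dominating sequence: choosing the standard monotone approximations so that $|a_{n,k}(\mathbf{x})|\le|a_n(\mathbf{x})|$ and $|g_k(\mathbf{x})|\le|g(\mathbf{x})|$, each summand is dominated by $|a_n(\mathbf{x})||g(\mathbf{x})|^n/n!$, whose sum is finite by the absolute convergence implicit in the definition of a finite signed Taylor measure (see~(\ref{FiniteTaylorMeasureSpace})); dominated convergence for series then justifies passing the $k$-limit through $\sum_{n\in B}$. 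To secure the double limit I would choose the rationals $s_j^{(\cdot)}$ with a prescribed rate, fine enough to dominate the amplification $|s^n-g_k(\mathbf{x})^n|\le n\max(|s|,|g_k(\mathbf{x})|)^{n-1}|s-g_k(\mathbf{x})|$ coming from the $n$-th power, so that $|h_{j,k}^B(\mathbf{x})-\tilde f_k^B(\mathbf{x})|$ is bounded by a quantity tending to $0$ in $j$ uniformly in $k$; Moore--Osgood (one iterated limit attained uniformly) then yields $\lim_{j,k\to\infty}h_{j,k}^B(\mathbf{x})=f^B(\mathbf{x})$. Since $\mathbf{x}\in\Re^p$ was arbitrary, this proves the claim, and I expect the delicate point to be precisely this uniform control of the power-amplified rational error, which is what distinguishes the genuine double limit from the easy iterated ones.
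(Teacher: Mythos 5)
Your overall architecture is sound and is in fact more demanding than the paper's own proof: the paper simply pushes the double limit through the sum over $n\in B$, invoking bounded convergence, and silently identifies the double limit with the iterated ones, whereas you correctly isolate the two interchanges (limit versus infinite sum, and iterated versus genuine double limit). Your collapse of the indicator sums at fixed $\mathbf{x}$, the intermediate function $\tilde{f}_k^B$, and the dominated-convergence treatment of the $k$-limit (with the total-variation dominant $\sum_{n\in B}|a_n(\mathbf{x})||g(\mathbf{x})|^n/n!$, which is indeed finite because a signed measure defined on all of $\mathcal{B}(\mathbb{N})$ must converge unconditionally, hence absolutely) are all correct. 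The genuine gap is in your mechanism for the $j$-step. You propose to choose the rationals at a prescribed rate fine enough to dominate the amplification $|s^n-g_k(\mathbf{x})^n|\le n\max(|s|,|g_k(\mathbf{x})|)^{n-1}|s-g_k(\mathbf{x})|$ after summing over $n\in B$. When $B$ is infinite this cannot work, because the amplified series need not converge at all: absolute convergence of $\sum_n |a_n(\mathbf{x})||g(\mathbf{x})|^n/n!$ does not imply convergence of its differentiated companion
\begin{equation*}
\sum_{n\in B}|a_n(\mathbf{x})|\,n\,|g(\mathbf{x})|^{n-1}\frac{1}{n!};
\end{equation*}
for instance, if $|a_n(\mathbf{x})||g(\mathbf{x})|^n/n!=n^{-2}$ the latter is a multiple of the harmonic series. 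Since $s_j^{(l,k)}$ approximates $d_k^{(l)}$ once and for all and cannot depend on $n$, no rate in $j$ rescues the bound: it is infinite for every $j$. (The same issue threatens even the well-definedness of $h_{j,k}^B$ for infinite $B$, since inflating $|g_k(\mathbf{x})|$ by any fixed $\varepsilon>0$ can already destroy summability of the defining series.)

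The repair stays entirely within your framework but replaces the global rate bound by domination plus a tail split. Choose the rationals from below in absolute value, $|s_j^{(n,l,k)}|\le |c_{n,k}^{(l)}|$ and $|s_j^{(l,k)}|\le |d_k^{(l)}|$, which is possible by density of $\mathbb{Q}$; combined with $|a_{n,k}|\le|a_n|$ and $|g_k|\le|g|$, every summand of $h_{j,k}^B(\mathbf{x})-\tilde{f}_k^B(\mathbf{x})$ is then dominated by $2|a_n(\mathbf{x})||g(\mathbf{x})|^n/n!$, a fixed summable sequence independent of $j$ and $k$ (this also settles well-definedness of $h_{j,k}^B$). Given $\varepsilon>0$, pick $N$ so the tail beyond $N$ of the dominant is below $\varepsilon$; on the finitely many terms $n\le N$ your amplification inequality is harmless, and a uniform rate such as $|s_j^{(n,l,k)}-c_{n,k}^{(l)}|\le 2^{-j}$, $|s_j^{(l,k)}-d_k^{(l)}|\le 2^{-j}$ drives the head to $0$ as $j\to\infty$, uniformly in $k$. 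This yields $\sup_k\bigl\vert h_{j,k}^B(\mathbf{x})-\tilde{f}_k^B(\mathbf{x})\bigr\vert\to 0$, and then the triangle estimate $\vert h_{j,k}^B(\mathbf{x})-f^B(\mathbf{x})\vert\le\sup_k\vert h_{j,k}^B(\mathbf{x})-\tilde{f}_k^B(\mathbf{x})\vert+\vert\tilde{f}_k^B(\mathbf{x})-f^B(\mathbf{x})\vert$ gives the genuine double limit directly (your appeal to Moore--Osgood becomes superfluous). In short, the delicate point you flagged is real, but it is resolved by the dominated-convergence mechanism on the tail, not by any uniform rate on the rationals; the power-amplified estimate may only ever be applied to finitely many terms.
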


\begin{proof}
Assume the notation in the proof of the previous theorem. We show that $%
h_{j,k}^{B}$ converges pointwise to $f^{B}$. We have%
\begin{gather*}
\underset{j,k\rightarrow +\infty }{\lim }h_{j,k}^{B}(\mathbf{x})=\underset{%
j,k\rightarrow +\infty }{\lim }\sum\limits_{n\in B}\left[ \sum\limits_{l\in
L_{a}^{n,k}}s_{j}^{(n,l,k)}I_{C_{n,k}^{(l)}}(\mathbf{x})\right] \left[
\sum\limits_{l\in L_{\gamma }^{k}}s_{j}^{(l,k)}I_{D_{k}^{(l)}}(\mathbf{x})%
\right] ^{n}\frac{1}{n!} \\
=\sum\limits_{n\in B}\underset{j,k\rightarrow +\infty }{\lim }\left[
\sum\limits_{l\in L_{a}^{n,k}}s_{j}^{(n,l,k)}I_{C_{n,k}^{(l)}}(\mathbf{x})%
\right] \left[ \underset{j,k\rightarrow +\infty }{\lim }\sum\limits_{l\in
L_{\gamma }^{k}}s_{j}^{(l,k)}I_{D_{k}^{(l)}}(\mathbf{x})\right] ^{n}\frac{1}{%
n!} \\
=\sum\limits_{n\in B}\underset{k\rightarrow +\infty }{\lim }\left[
\sum\limits_{l\in L_{a}^{n,k}}\underset{j\rightarrow +\infty }{\lim }%
s_{j}^{(n,l,k)}I_{C_{n,k}^{(l)}}(\mathbf{x})\right] \left[ \underset{%
k\rightarrow +\infty }{\lim }\sum\limits_{l\in L_{\gamma }^{k}}\underset{%
j\rightarrow +\infty }{\lim }s_{j}^{(l,k)}I_{D_{k}^{(l)}}(\mathbf{x})\right]
^{n}\frac{1}{n!}
\end{gather*}%
\begin{eqnarray*}
&=&\sum\limits_{n\in B}\underset{k\rightarrow +\infty }{\lim }\left[
\sum\limits_{l\in L_{a}^{n,k}}c_{n,k}^{(l)}I_{C_{n,k}^{(l)}}(\mathbf{x})%
\right] \left[ \underset{k\rightarrow +\infty }{\lim }\sum\limits_{l\in
L_{\gamma }^{k}}d_{k}^{(l)}I_{D_{k}^{(l)}}(\mathbf{x})\right] ^{n}\frac{1}{n!%
} \\
&=&\sum\limits_{n\in B}\underset{k\rightarrow +\infty }{\lim }a_{n,k}(%
\mathbf{x})\left[ \underset{k\rightarrow +\infty }{\lim }\gamma _{k}(\mathbf{%
x})\right] ^{n}\frac{1}{n!}=\sum\limits_{n\in B}a_{n}(\mathbf{x})\frac{%
\gamma ^{n}(\mathbf{x})}{n!}=f^{B}(\mathbf{x}),
\end{eqnarray*}%
for all fixed $\mathbf{x}\in \Re ^{p}$. Note here that convergence is
pointwise in $\mathbf{x}\in \Re ^{p}$, and that we invoke bounded
convergence theorem as required, in order to pass the limits under the
summation signs.
\end{proof}

We collect some special cases and properties of $\mathcal{F}_{\Re }^{\Re
^{p}}$ in the following.

\begin{remark}[Special Cases and Properties]
\label{MTMFExcont}We discuss some important consequences of the theoretical
development up to this point.

\begin{enumerate}
\item Continuity and subspaces of $\mathcal{F}_{\Re }^{\Re ^{p}}$: Let $B\in
\mathcal{B}(\mathbb{N}),$ and $\left\Vert .\right\Vert $ the Euclidean norm
in $\Re ^{p}.$ Recall that $f:(\Re ^{p},\left\Vert .\right\Vert )\rightarrow
(\Re ,|.|)$ is called Lipschitz if and only if there exists $K<\infty $ such
that%
\begin{equation*}
\left\vert f(\mathbf{x})-f(\mathbf{y})\right\vert \leq K\left\Vert \mathbf{x}%
-\mathbf{y}\right\Vert =K\sqrt{\sum\limits_{i=1}^{p}(x_{i}-y_{i})^{2}},
\end{equation*}%
for all $\mathbf{x},\mathbf{y}\in \Re ^{p}.$ Since sums and products of
Lipschitz functions are also Lipschitz, then any function $f^{B}\in \mathcal{%
F}_{\Re }^{\Re ^{p}},$ is also Lipschitzian, provided that $f^{B}(B)=T_{g(%
\mathbf{x}),\mathbf{a}(\mathbf{x})}(B),$ is defined based on Lipschitz
functions $a_{n},g:\Re ^{p}\rightarrow \Re ,$ where $\mathbf{a}(\mathbf{x}%
)=[a_{0}(\mathbf{x}),$ $a_{1}(\mathbf{x}),...],$ $n\in \mathbb{N}$. As a
result, the restriction of $\mathcal{F}_{\Re }^{\Re ^{p}}$ to members with
underlying Lipschitz functions $a_{n}$ and $g,$ describes the totality of
all Lipschitz functions. Now every Lipschitz continuous function is
uniformly continuous but the converse is not always true.\newline
Similarly as above, the totality of uniformly continuous functions is
obtained by the restriction of $\mathcal{F}_{\Re }^{\Re ^{p}}$ to members
with underlying uniformly continuous functions $a_{n}$ and $g.$ The same
arguments can be made for continuous functions and analytic functions.
Consequently, $\mathcal{F}_{\Re }^{\Re ^{p}}$ emerges as a unifying
framework in studying important collections of functions, since it contains
all of the aforementioned subspaces, with appropriate assumptions in the
underlying functions $a_{n}$ and $g$ used to define its members.

\item $\mathcal{L}^{p}$-spaces: In order to connect $\mathcal{F}_{\Re }^{\Re
^{r}}$ with $\mathcal{L}^{p}$-spaces, we require some additional assumptions
in order to handle unbounded functions and the integrals involved, e.g., for
the collection of all trivial MTMFs $f\in \mathcal{F}_{\Re }^{\Re ^{r}}$
(i.e., all measurable functions)$,$ we require that $\int\limits_{\Re
^{r}}|f|^{p}d\mu _{r}<+\infty ,$ $1\leq p<+\infty ,$ where $\mu _{r}$
denotes Lebesgue measure in $(\Re ^{r},\mathcal{B}(\Re ^{r}))$, and we do
not impose any other integrability conditions such as those of Remark \ref%
{CondRemarks}. We denote this restricted space by $\mathcal{F}_{r}=\mathcal{F%
}(\Re ^{r},\mathcal{B}(\Re ^{r}),\mu _{r})\subset \mathcal{F}_{\Re }^{\Re
^{r}}.$ Then using the $\mathcal{L}^{p}$-norm, $\left\Vert f\right\Vert
_{p}=\left( \int\limits_{\Re ^{r}}|f|^{p}d\mu _{r}\right) ^{\frac{1}{p}},$
we have immediately that $\mathcal{F}_{r}=\mathcal{L}^{p}(\Re ^{r},\mathcal{B%
}(\Re ^{r}),\mu _{r}),$ i.e., $\mathcal{F}_{r}$ is equivalent to Lebesgue $%
\mathcal{L}^{p}$-space, and thus enjoys all its properties.

\item Riemann's zeta function: Take $p=1$, $B=\mathbb{N}^{+},$ $a_{0}(x)=0,$
$a_{n}(x)=n!/n^{x},$ $n\in \mathbb{N}^{+},$ and $g(x)=1,$ so that equation (%
\ref{MultTaylorFunction}) yields Riemann's zeta function
\begin{equation}
\zeta (x)=\sum\limits_{n\in \mathbb{N}^{+}}\frac{1}{n^{x}}=\sum\limits_{n\in
B}a_{n}(x)\frac{g^{n}(x)}{n!}=T_{1,\mathbf{a}(x)}(\mathbb{N}^{+}),
\label{zetafunc}
\end{equation}%
and it converges for $x>1.$ A\ direct generalization to the multivariate
case is given by%
\begin{equation*}
\zeta (\mathbf{x})=\sum\limits_{n\in \mathbb{N}^{+}}\frac{1}{%
n^{x_{1}+...+x_{p}}}=T_{1,\mathbf{a}(\mathbf{x})}(\mathbb{N}^{+}),
\end{equation*}%
with $a_{0}(\mathbf{x})=0,$ $a_{n}(\mathbf{x})=n!/n^{x_{1}+...+x_{p}},$ $%
n\in \mathbb{N}^{+}$, which converges for $x_{1}+...+x_{p}>1.$. For a recent
treatment of the $\zeta $ function using series expansions see \cite%
{young2023global}.

\item Hypergeometric series: For $p=1$, set $B=\mathbb{N},$ $g(x)=x$, and $%
a_{n}(x)=n!c_{n},$ free of $x$, and further assume that%
\begin{equation*}
\frac{c_{n+1}}{c_{n}}=\frac{A(n)}{B(n)},
\end{equation*}%
with $A(n)$ and $B(n)$ polynomials in $n$, for all $n\in \mathbb{N}$. Then
the MTMF becomes the hypergeometric series%
\begin{equation*}
T_{x,\mathbf{a}(x)}(\mathbb{N})=\sum\limits_{n\in \mathbb{N}}c_{n}x^{n},
\end{equation*}%
which can be extended to the generalized hypergeometric function; recall the
Pochhammer symbols defined by%
\begin{eqnarray*}
(a)_{0} &=&1, \\
(a)_{n} &=&a(a+1)(a+2)...(a+n-1),\text{ }n\in \mathbb{N}^{+},
\end{eqnarray*}%
$a\in \Re ,$ and write%
\begin{equation*}
T_{x,\mathbf{c}}(\mathbb{N})=\text{ }%
_{p}F_{q}(a_{1},...,a_{p};b_{1},...,b_{q};x)=\sum\limits_{n\in \mathbb{N}%
}c_{n}\frac{x^{n}}{n!},
\end{equation*}%
with $\mathbf{c}=$ $[c_{0},c_{1},$ $\dots ],$ free of $x$, and%
\begin{equation*}
c_{n}=\frac{(a_{1})_{n}...(a_{p})_{n}}{(b_{1})_{n}...(b_{q})_{n}},
\end{equation*}%
for all $n\in \mathbb{N}.$ The MTMF $T_{x,\mathbf{c}(x)}(\mathbb{N})$
contains as special cases the Wilson polynomials (see \cite{wilson1980some}%
), and when the coefficients $c_{n}$ are allowed to be specific functions of
$x$, the Askey scheme (see \cite{brezinski2006polynomes}, \cite%
{askey1985some}, \cite{koekoek1994askey}) and the continuous Hahn
polynomials (see \cite{koekoek2010hypergeometric}), and their modifications
and extensions.

\item Generalization of Rodrigues' formula: Next take $B=\mathbb{N},$ and $%
a_{k}(\mathbf{x})=a_{n}h_{n}(\mathbf{x})h(\mathbf{x})e^{-g(\mathbf{x})},$
where $a_{n}\in \Re ,$ with $h$, $h_{n}$ and $g$ some analytic functions.
Then we can write%
\begin{equation*}
P_{n}(\mathbf{x})=T_{g(\mathbf{x}),\mathbf{a}(\mathbf{x})}(\mathbb{N}%
)=\sum\limits_{k\in \mathbb{N}}a_{k}(\mathbf{x})\frac{g(\mathbf{x})^{k}}{k!}%
=a_{n}h_{n}(\mathbf{x})h(\mathbf{x})e^{-g(\mathbf{x})}\sum\limits_{k\in
\mathbb{N}}\frac{g(\mathbf{x})^{k}}{k!},
\end{equation*}%
which yields%
\begin{equation}
P_{n}(\mathbf{x})=a_{n}h(\mathbf{x})h_{n}(\mathbf{x}),  \label{Iterative2}
\end{equation}%
for all $n\in \mathbb{N}.$ Note that $P_{n}$ as constructed is not
necessarily a polynomial. As a result, the latter becomes a generalization
of Rodrigues' formula
\begin{equation}
P_{n}(x)=\frac{c_{n}}{w(x)}\frac{d^{n}}{dx^{n}}\left[ B^{n}(x)w(x)\right] ,
\label{Rodriguesformula}
\end{equation}%
for $p=1$, $a_{n}=c_{n},$ $h(x)=1/w(x),$ and $h_{n}(x)=\frac{d^{n}}{dx^{n}}%
\left[ B^{n}(x)w(x)\right] ,$ under the required conditions on $c_{n},$ $w,$
and $B$ (see for example \cite{askey20051839})$.$ Consequently, we
immediately recognize a plethora of classic polynomials as special cases of
the MTMFs of equation (\ref{Iterative2}) (for $p=1$ and appropriate support,
not $\Re $ necessarily), and present them in Table \ref{PolysTable} (see
\cite{brezinski2006polynomes},\cite{castillo2023classical}, \cite%
{castillo2024first} for more details on these classic polynomials and more).
Note that the collection of polynomials $\{P_{n}(x)\}_{n=0}^{+\infty },$
where $P_{n}$ is a polynomial of degree $n$, form a sequence of orthogonal
polynomials on an interval $[a,b]$ with respect to the weighted inner product%
\begin{equation}
\int\limits_{a}^{b}P_{n}(x)P_{m}(x)w(x)\mu _{1}(dx)=K_{m,n}\delta _{mn},
\label{InnerProductPoly}
\end{equation}%
where $\delta _{mn}$ Kronecker's delta function, and $K_{m,n}$ are non zero
constants, for all $m,n\in \mathbb{N}$.

\item Monomials in $\mathcal{F}_{\Re }^{\Re ^{p}}$: A standard (countable)
basis in the space of polynomials includes the monomials $\{\mathbf{x}^{%
\mathbf{b}}\}_{\mathbf{b}\in \mathbb{N}^{p}},$ where $\mathbf{x}=$ $%
[x_{1},x_{2},$ $\dots ,x_{p}]\in \Re ^{p},$ $\mathbf{b}=$ $[b_{1},b_{2},$ $%
\dots ,b_{p}]\in \mathbb{N}^{p},$ and $\mathbf{x}^{\mathbf{b}%
}=x_{1}^{b_{1}}...x_{p}^{b_{p}},$ which are special cases of MTMFs since for
$B=\{1\}$, $g(\mathbf{x})=\mathbf{x}^{\mathbf{b}}$ and $\mathbf{a}(\mathbf{x}%
)=[0,1,0,...],$ we have%
\begin{equation}
\mathbf{x}^{\mathbf{b}}=\sum\limits_{n\in B}a_{n}(\mathbf{x})\frac{\left(
\mathbf{x}^{\mathbf{b}}\right) ^{n}}{n!}=T_{\mathbf{x}^{\mathbf{b}%
},[0,1,0,...]}(\{1\}).
\end{equation}%
Monomials are not orthogonal with respect to the weighted inner product (\ref%
{InnerProductPoly}), but can be made orthogonal using the Gram-Schmidt
orthogonalization process. For an analytic function $f\in \mathcal{F}_{\Re
}^{\Re ^{p}}$ at a point $\mathbf{x}_{0}=$ $[x_{1,0},x_{2,0},$ $\dots
,x_{p,0}]$ $\in $ $\Re ^{p}$, the monomials $\{(\mathbf{x}-\mathbf{x}_{0})^{%
\mathbf{b}}\}_{\mathbf{b}\in \mathbb{N}^{p}}$, can uniquely describe $f$ via
Taylor's theorem (special case of Definition \ref{MTMFDef}), but in other
cases, say continuous, non-analytic functions, one needs to use polynomials
(linear combinations of $\mathbf{x}^{\mathbf{b}}$), and apply Weierstrass'
theorem. Even though they can approximate continuous functions as close as
we want, polynomials fail to form a basis in $\mathcal{C}_{\Re }^{\Re ^{p}}.$
\end{enumerate}
\end{remark}

\begin{table}[tbp]
\centering{\tiny
\begin{tabular}{@{}lllll}
\hline
Polynomial & $a_n$ & $h(x)$ & $h_{n}(x)$ & Support \\ \hline
Chebyshev (1rst kind) & $\frac{(-1)^{n}2^{n}n!}{(2n)!}$ & $\sqrt{1-x^{2}}$ &
$\frac{d^{n}}{dx^{n}}\left[ (1-x^{2})^{n-\frac{1}{2}}\right]$ & $[-1,1]$ \\
Chebyshev (2nd kind) & $\frac{(-1)^{n}2^{n+1}(n+1)^{2}n!}{(2n+2)!}$ & $1/%
\sqrt{1-x^{2}}$ & $\frac{d^{n}}{dx^{n}}\left[ (1-x^{2})^{n+\frac{1}{2}}%
\right]$ & $[-1,1]$ \\
Hermite & $(-1)^{n}$ & $e^{x^{2}}$ & $\frac{d^{n}}{dx^{n}}e^{-x^{2}}$ & $\Re$
\\
Jacobi & $\frac{(-1)^{n}}{2^{n}n!}$ & $(1-x)^{-a}(1+x)^{-b}$ & $\frac{d^{n}}{%
dx^{n}}\left( (1-x)^{a}(1+x)^{b}\left( 1-x^{2}\right) ^{n}\right)$ & $[-1,1]$
\\
Laguerre & $\frac{1}{n!}$ & $e^x$ & $\frac{d^{n}}{dx^{n}}\left(
x^{n}e^{-x}\right)$ & $[0,+\infty)$ \\
Legendre & $\frac{1}{2^{n}n!}$ & $1$ & $\frac{d^{n}}{dx^{n}}\left(
x^{2}-1\right)^n$ & $[-1,1]$ \\ \hline
\end{tabular}
}
\caption{Classic orthogonal polynomials as special cases of MTMFs based on
equation (\protect\ref{Iterative2}).}
\label{PolysTable}
\end{table}

Next we discuss differentiation of functions in $\mathcal{F}_{\Re }^{\Re
^{p}}.$

\subsection{Differentiation in $\mathcal{F}_{\Re }^{\Re ^{p}}$}

Recall that a measurable function is not necessarily differentiable, and
therefore, investigating differentiation in\ the function space $\mathcal{F}%
_{\Re }^{\Re ^{p}}$ will require additional assumptions. In what follows,
let $c_{k}^{n}=\frac{n!}{(n-k)!k!},$ and $c_{k_{1},...,k_{n}}^{n}=\frac{n!}{%
k_{1}!...k_{n}!},$ the binomial and multinomial coefficients, respectively$.$
For arbitrary $B\in \mathcal{B}(\mathbb{N})$, and $f^{B}\in \mathcal{F}_{\Re
}^{\Re ^{p}}$, given by%
\begin{equation}
f^{B}(\mathbf{x})=T_{g(\mathbf{x}),\mathbf{a}(\mathbf{x})}(B)=\sum\limits_{n%
\in B}a_{n}(\mathbf{x})\frac{g^{n}(\mathbf{x})}{n!},  \label{DiffTMFgen}
\end{equation}%
assume that $\mathbf{a}=$ $[a_{0}(\mathbf{x}),a_{1}(\mathbf{x}),$ $\dots ],$
has analytic elements and let $g$ be analytic, so that $f^{B}(\mathbf{x})$
is continuously, differentiable of any order and with respect to any of its
arguments. In particular, differentiation with respect to $x_{j}$ of
equation (\ref{DiffTMFgen}), and a straightforward application of the
general Leibnitz rule yields%
\begin{equation*}
\frac{\partial ^{k}}{\partial x_{j}^{k}}T_{g(\mathbf{x}),\mathbf{a}(\mathbf{x%
})}(B)=\sum\limits_{n\in B}\frac{1}{n!}\frac{\partial ^{k}}{\partial
x_{j}^{k}}\left[ a_{n}(\mathbf{x})g^{n}(\mathbf{x})\right]
=\sum\limits_{n\in B}\frac{1}{n!}\sum\limits_{l=0}^{k}c_{l}^{k}\frac{%
\partial ^{k-l}a_{n}(\mathbf{x})}{\partial ^{k-l}x_{j}}\frac{\partial ^{l}}{%
\partial x_{j}^{l}}\left[ g^{n}(\mathbf{x})\right] ,
\end{equation*}%
for all $k\in \mathbb{N}$. Since%
\begin{equation*}
\frac{\partial ^{l}g(\mathbf{x})^{n}}{\partial x_{j}^{l}}=\sum\limits
_{\substack{ l_{1}+...+l_{n}=l  \\ l_{1},...,l_{n}\geq 0}}%
c_{l_{1},...,l_{n}}^{n}\prod\limits_{1\leq d\leq n}\frac{\partial ^{k_{d}}g(%
\mathbf{x})}{\partial ^{k_{d}}x_{j}},
\end{equation*}%
we have the general form for differentiation of elements of $\mathcal{F}%
_{\Re }^{\Re ^{p}},$ given by%
\begin{equation}
\frac{\partial ^{k}}{\partial x_{j}^{k}}T_{g(\mathbf{x}),\mathbf{a}(\mathbf{x%
})}(B)=\sum\limits_{n\in B}\frac{1}{n!}\sum\limits_{l=0}^{k}c_{l}^{k}\frac{%
\partial ^{k-l}a_{n}(\mathbf{x})}{\partial ^{k-l}x_{j}}\sum\limits
_{\substack{ l_{1}+...+l_{n}=l  \\ l_{1},...,l_{n}\geq 0}}%
c_{l_{1},...,l_{n}}^{n}\prod\limits_{1\leq d\leq n}\frac{\partial ^{k_{d}}g(%
\mathbf{x})}{\partial ^{k_{d}}x_{j}}.  \label{DerivFormula1}
\end{equation}

Next consider a special case of MTMFs, where for any $f^{B}\in \mathcal{F}%
_{\Re }^{\Re ^{p+1}},$ we set%
\begin{equation}
f^{B}(t,\mathbf{x})=T_{g(\mathbf{x}),\mathbf{a}_{t}}(B)=\sum\limits_{n\in
B}a_{n}(t)\frac{g(\mathbf{x})^{n}}{n!},  \label{DiffTMF}
\end{equation}%
where $\mathbf{x}=[x_{1},...,x_{p}]\in \Re ^{p},$ and $\mathbf{a}%
_{t}=[a_{0}(t),a_{1}(t),...]\in \Re ^{\infty },$ has elements that are
functions of only $t\in \Re $, $g$ is a function of only $\mathbf{x}\in \Re
^{p}$, and $B\in \mathcal{B}(\mathbb{N})$. Let the shift operator of the
sequence $\mathbf{a}_{t}$ be defined by $\tau _{B}(\mathbf{a}%
_{t})=[a_{l+1}(t),l\in B],$ and define $\tau _{B}^{n}=\tau _{B}\circ
...\circ \tau _{B},$ the $n$-fold composition of $\tau _{B}$ with itself. In
particular, $\tau _{\mathbb{N}}^{n}(\mathbf{a}%
_{t})=[a_{n+1}(t),a_{n+2}(t),...],$ $n\in \mathbb{N}.$

We use $\mathcal{A}_{\Re }^{\Re ^{p+1}}$ to denote the subspace of functions
$\mathcal{F}_{\Re }^{\Re ^{p+1}}$ of the form (\ref{DiffTMF}), with $g$ an
analytic function, so that all order derivatives exist, with respect to all
its arguments. In some cases, we can relax this strong assumption on $g$,
and obtain important formulas for MTMFs, as we see next.

\begin{remark}[Finite order differentiation]
\label{MTMFremark1}Assume that $B_{N}=\{0,1,...,N\},$ with $N\in \mathbb{N},$
in general. However, noting that $T_{g(\mathbf{x}),\mathbf{a}%
_{t}}(B_{0})=a_{0}(t),$ and $T_{g(\mathbf{x}),\tau _{B_{0}}(\mathbf{a}%
_{t})}(B_{0})$ $=$ $a_{1}(t),$ free of $\mathbf{x}\in \Re ^{p}$, we will
assume in certain places that $N>1$.

\begin{enumerate}
\item Second order: When $g$ is continuously, twice differentiable, with
respect to all its arguments, taking derivative with respect to $x_{i}$ of
equation (\ref{DiffTMF}) we have%
\begin{eqnarray*}
\frac{\partial f^{B_{N}}(t,\mathbf{x})}{\partial x_{i}} &=&\frac{\partial
T_{g(\mathbf{x}),\mathbf{a}_{t}}(B_{N})}{\partial x_{i}}=\frac{\partial g(%
\mathbf{x})}{\partial x_{i}}\sum\limits_{n=0}^{N}a_{n}(t)n\frac{g(\mathbf{x}%
)^{n-1}}{n!} \\
&=&\frac{\partial g(\mathbf{x})}{\partial x_{i}}\sum\limits_{n=1}^{N}a_{n}(t)%
\frac{g(\mathbf{x})^{n-1}}{(n-1)!}\overset{k=n-1}{=}\frac{\partial g(\mathbf{%
x})}{\partial x_{i}}\sum\limits_{k=0}^{N-1}a_{k+1}(t)\frac{g(\mathbf{x})^{k}%
}{k!}
\end{eqnarray*}%
so that we have an iterative formula for the first partial derivative given
by%
\begin{equation}
\frac{\partial f^{B_{N}}(t,\mathbf{x})}{\partial x_{i}}=\frac{\partial g(%
\mathbf{x})}{\partial x_{i}}T_{g(\mathbf{x}),\tau _{B_{N-1}}(\mathbf{a}%
_{t})}(B_{N-1})\in \mathcal{F}_{\Re }^{\Re ^{p+1}}.
\label{FirstPartialDeriv}
\end{equation}%
Differentiating with respect to $x_{j}$ yields%
\begin{gather}
\frac{\partial ^{2}f^{B_{N}}(t,\mathbf{x})}{\partial x_{j}\partial x_{i}}=%
\frac{\partial }{\partial x_{j}}\left( \frac{\partial g(\mathbf{x})}{%
\partial x_{i}}T_{g(\mathbf{x}),\tau _{B_{N-1}}(\mathbf{a}%
_{t})}(B_{N-1})\right)  \notag \\
=\frac{\partial ^{2}g(\mathbf{x})}{\partial x_{j}\partial x_{i}}T_{g(\mathbf{%
x}),\tau _{B_{N-1}}(\mathbf{a}_{t})}(B_{N-1})+\frac{\partial g(\mathbf{x})}{%
\partial x_{i}}\frac{\partial g(\mathbf{x})}{\partial x_{j}}T_{g(\mathbf{x}%
),\tau _{B_{N-2}}^{2}(\mathbf{a}_{t})}(B_{N-2})\in \mathcal{F}_{\Re }^{\Re
^{p+1}}.  \label{SecondPartialDeriv}
\end{gather}

\item Finite order: Assume that $g$ is continuously, $k$th order
differentiable, with respect to all its arguments. From equation (\ref%
{FirstPartialDeriv}), an appeal to the general Leibniz rule yields%
\begin{eqnarray*}
\frac{\partial ^{k+1}T_{g(\mathbf{x}),\mathbf{a}_{t}}(B_{N})}{\partial
x_{i}^{k+1}} &=&\frac{\partial ^{k}}{\partial x_{i}^{k}}\left( \frac{%
\partial g(\mathbf{x})}{\partial x_{i}}T_{g(\mathbf{x}),\tau _{B_{N-1}}(%
\mathbf{a}_{t})}(B_{N-1})\right) \\
&=&\sum\limits_{n=0}^{k}c_{n}^{k}\frac{\partial ^{k-n+1}g(\mathbf{x})}{%
\partial ^{k-n+1}x_{i}}\frac{\partial ^{n}}{\partial x_{i}^{n}}T_{g(\mathbf{x%
}),\tau _{B_{N-1}}(\mathbf{a}_{t})}(B_{N-1}),
\end{eqnarray*}%
with $N>1$, which provides an iterative formula for calculation of
derivatives of MTMFs of all order with respect to $x_{i}$.\newline
On the other hand, using equation (\ref{DiffTMF}), we can write
\begin{equation}
\frac{\partial ^{k+1}T_{g(\mathbf{x}),\mathbf{a}_{t}}(B_{N})}{\partial
x_{i}^{k+1}}=\sum\limits_{n=0}^{N}\frac{a_{n}(t)}{n!}\frac{\partial ^{k+1}g(%
\mathbf{x})^{n}}{\partial x_{i}^{k+1}},  \label{DerivMTMF1}
\end{equation}%
and since%
\begin{equation*}
\frac{\partial ^{k}g(\mathbf{x})^{n}}{\partial x_{i}^{k}}=\sum\limits
_{\substack{ k_{1}+...+k_{n}=k  \\ k_{1},...,k_{n}\geq 0}}%
c_{k_{1},...,k_{n}}^{k}\prod\limits_{l=1}^{n}\frac{\partial ^{k_{l}}g(%
\mathbf{x})}{\partial ^{k_{l}}x_{i}},
\end{equation*}%
we have an explicit formula of the $k$th order derivative of $T_{g(\mathbf{x}%
),\mathbf{a}_{t}}(B_{N})$ in terms of $g$ and $\mathbf{a}_{t}$ given by%
\begin{equation}
\frac{\partial ^{k+1}T_{g(\mathbf{x}),\mathbf{a}_{t}}(B_{N})}{\partial
x_{i}^{k+1}}=\sum\limits_{n=0}^{N}\frac{a_{n}(t)}{n!}\sum\limits_{\substack{ %
k_{1}+...+k_{n}=k+1  \\ k_{1},...,k_{n}\geq 0}}c_{k_{1},...,k_{n}}^{k+1}%
\prod\limits_{l=1}^{n}\frac{\partial ^{k_{l}}g(\mathbf{x})}{\partial
^{k_{l}}x_{i}},
\end{equation}%
for all $k\in \mathbb{N}$. Therefore, for $N>1$, we must have%
\begin{equation}
\sum\limits_{n=0}^{k}c_{n}^{k}\frac{\partial ^{k-n+1}g(\mathbf{x})}{\partial
^{k-n+1}x_{i}}\frac{\partial ^{n}y_{t}(\mathbf{x})}{\partial x_{i}^{n}}%
=\sum\limits_{n=0}^{N}\frac{a_{n}(t)}{n!}\sum\limits_{\substack{ %
k_{1}+...+k_{n}=k+1  \\ k_{1},...,k_{n}\geq 0}}c_{k_{1},...,k_{n}}^{k+1}%
\prod\limits_{l=1}^{n}\frac{\partial ^{k_{l}}g(\mathbf{x})}{\partial
^{k_{l}}x_{i}},  \label{DifferentiallFormulaT}
\end{equation}%
so that the latter is a linear, inhomogeneous, $k$th order ordinary
differential equation (ODE) with respect to $x_{i}$, with a partial solution%
\begin{equation*}
y_{t}(\mathbf{x})=T_{g(\mathbf{x}),\tau _{B_{N-1}}(\mathbf{a}%
_{t})}(B_{N-1})=\sum\limits_{k=0}^{N-1}a_{k+1}(t)\frac{g(\mathbf{x})^{k}}{k!}%
,
\end{equation*}%
$i=1,2,...,p$. Sending $N\rightarrow \infty ,$ and assuming all function
sums on the right hand side below converge, we obtain the space-time ODE%
\begin{equation}
\sum\limits_{n=0}^{k}c_{n}^{k}\frac{\partial ^{k-n+1}g(\mathbf{x})}{\partial
^{k-n+1}x_{i}}\frac{\partial ^{n}y_{t}(\mathbf{x})}{\partial x_{i}^{n}}%
=\sum\limits_{n=0}^{+\infty }\frac{a_{n}(t)}{n!}\sum\limits_{\substack{ %
k_{1}+...+k_{n}=k+1  \\ k_{1},...,k_{n}\geq 0}}c_{k_{1},...,k_{n}}^{k+1}%
\prod\limits_{l=1}^{n}\frac{\partial ^{k_{l}}g(\mathbf{x})}{\partial
^{k_{l}}x_{i}},  \label{MTMFODE1}
\end{equation}%
with a partial solution the MTMF given by%
\begin{equation}
y_{t}(\mathbf{x})=\sum\limits_{n=0}^{+\infty }a_{n+1}(t)\frac{g(\mathbf{x}%
)^{n}}{n!}.  \label{MTMFODESol1}
\end{equation}
\end{enumerate}
\end{remark}

\begin{example}[Space-Time ODEs]
We consider specific cases of the linear ODE (\ref{MTMFODE1}) and the MTMF
of equation (\ref{MTMFODESol1}), in order to illustrate first applications
to partial differential equations. In what follows we take $p=1$, $k=2$, and
set $B=\mathbb{N}$, i.e., we have the ODE in the form%
\begin{equation*}
g^{\prime }(x)\frac{\partial ^{2}y_{t}(x)}{\partial x^{2}}+2g^{\prime \prime
}(x)\frac{\partial y_{t}(x)}{\partial x}+g^{\prime \prime \prime
}(x)y_{t}(x)=\frac{\partial ^{3}q_{t}(x)}{\partial x^{3}},
\end{equation*}%
using Equation (\ref{DerivMTMF1}), with%
\begin{equation*}
q_{t}(x)=T_{g(x),\mathbf{a}_{t}}(\mathbb{N})=\sum\limits_{n=0}^{+\infty
}a_{n}(t)\frac{g(x)^{n}}{n!},
\end{equation*}%
and%
\begin{equation*}
y_{t}(x)=\sum\limits_{n=0}^{+\infty }a_{n+1}(t)\frac{g(x)^{n}}{n!}.
\end{equation*}%
We look for an MTMF that satisfies%
\begin{equation}
\frac{\partial ^{3}q_{t}(x)}{\partial x^{3}}=\lambda \frac{\partial y_{t}(x)%
}{\partial t}=\lambda \sum\limits_{n=0}^{+\infty }\frac{\partial a_{n+1}(t)}{%
\partial t}\frac{g(x)^{n}}{n!},  \label{MTMFDerivODE1}
\end{equation}%
for some constant $\lambda >0,$ i.e., we conjecture that $y_{t}(x)$ is the
partial solution to the partial differential equation (PDE) given by%
\begin{equation}
g^{\prime }(x)\frac{\partial ^{2}y_{t}(x)}{\partial x^{2}}+2g^{\prime \prime
}(x)\frac{\partial y_{t}(x)}{\partial x}+g^{\prime \prime \prime
}(x)y_{t}(x)=\lambda \frac{\partial y_{t}(x)}{\partial t},
\label{2ndspacetimeODE}
\end{equation}%
for some\ $g(x)$ and $a_{n}(t)$, provided we can find such functions. Using
analogous arguments as above, we may consider PDEs of the form%
\begin{equation}
g^{\prime }(x)\frac{\partial ^{2}y_{t}(x)}{\partial x^{2}}+2g^{\prime \prime
}(x)\frac{\partial y_{t}(x)}{\partial x}+g^{\prime \prime \prime
}(x)y_{t}(x)=\xi ^{2}\frac{\partial ^{2}y_{t}(x)}{\partial t^{2}},
\label{2ndspacetimewaveODE}
\end{equation}%
where $\xi \neq 0.$

\begin{enumerate}
\item Heat Equation: Take $g(x)=x$, so that this special case of the PDE\ (%
\ref{2ndspacetimeODE}) becomes the celebrated heat equation%
\begin{equation}
\frac{\partial ^{2}y_{t}(x)}{\partial x^{2}}=\lambda \frac{\partial y_{t}(x)%
}{\partial t},  \label{HeatEq1}
\end{equation}%
$\forall x\in \lbrack 0,a],$ and $a,$ $t>0,$ with partial solution%
\begin{equation*}
y_{t}(x)=\sum\limits_{n=0}^{+\infty }a_{n+1}(t)\frac{x^{n}}{n!},
\end{equation*}%
which is not separable in general (i.e., recall that separability implies
that for some measurable functions $w_{1}(t)$ and $w_{2}(x)$ we have $%
y_{t}(x)=w_{1}(t)w_{2}(x),$ see \cite{chasnov2009introduction}, Section
8.5), and%
\begin{equation*}
q_{t}(x)=\sum\limits_{n=0}^{+\infty }a_{n}(t)\frac{x^{n}}{n!}.
\end{equation*}%
Then assuming that $\mathbf{a}(t)$ are such that passing derivatives with
respect to $x$ or $t$, under the summation sign is valid, we write%
\begin{eqnarray*}
\frac{\partial ^{2}y_{t}(x)}{\partial x^{2}} &=&\frac{\partial ^{3}q_{t}(x)}{%
\partial x^{3}}=\frac{\partial ^{2}}{\partial x^{2}}\sum\limits_{n=0}^{+%
\infty }a_{n}(t)n\frac{x^{n-1}}{n!}=\frac{\partial }{\partial x}%
\sum\limits_{n=0}^{+\infty }a_{n}(t)n(n-1)\frac{x^{n-2}}{n!} \\
&=&\sum\limits_{n=0}^{+\infty }a_{n}(t)n(n-1)(n-2)\frac{x^{n-3}}{n!}%
=\sum\limits_{n=3}^{+\infty }a_{n}(t)\frac{x^{n-3}}{(n-3)!}\overset{k=n-3}{=}%
\sum\limits_{k=0}^{+\infty }a_{k+3}(t)\frac{x^{k}}{k!},
\end{eqnarray*}%
and using Equation (\ref{MTMFDerivODE1}), we have that (\ref{HeatEq1}) yields%
\begin{equation*}
\sum\limits_{n=0}^{+\infty }a_{n+3}(t)\frac{x^{n}}{n!}=\lambda
\sum\limits_{n=0}^{+\infty }\frac{\partial a_{n+1}(t)}{\partial t}\frac{x^{n}%
}{n!},\text{ }\forall x\in \lbrack 0,a].
\end{equation*}%
Consequently, we must have%
\begin{eqnarray*}
a_{3}(t) &=&\lambda \frac{\partial a_{1}(t)}{\partial t},\text{ }n=0, \\
a_{4}(t) &=&\lambda \frac{\partial a_{2}(t)}{\partial t},\text{ }n=1, \\
a_{5}(t) &=&\lambda \frac{\partial a_{3}(t)}{\partial t}=\lambda ^{2}\frac{%
\partial ^{2}a_{1}(t)}{\partial t^{2}},\text{ }n=2, \\
a_{6}(t) &=&\lambda \frac{\partial a_{4}(t)}{\partial t}=\lambda ^{2}\frac{%
\partial ^{2}a_{2}(t)}{\partial t^{2}},\text{ }n=3,
\end{eqnarray*}%
and so forth, that is, the functions $a_{n}(t),$ $n\in \mathbb{N},$ must
satisfy%
\begin{eqnarray*}
a_{2n+1}(t) &=&\lambda ^{n}\frac{\partial ^{n}a_{1}(t)}{\partial t^{n}},%
\text{ }n=1,2,...,\text{ and} \\
a_{2n+2}(t) &=&\lambda ^{n}\frac{\partial ^{n}a_{2}(t)}{\partial t^{n}},%
\text{ }n=1,2,...,
\end{eqnarray*}%
$t>0,$ for unknown measurable functions $a_{0}(t),$ $a_{1}(t)$ and $%
a_{2}(t), $ that can obtained once we impose boundary conditions with
respect to space (e.g., $a_{0}(t)$ is arbitrary, and for $x=0,$ $%
y_{t}(0)=a_{1}(t),$ and $\frac{\partial y_{t}(0)}{\partial t}=a_{2}(t)$).
Consequently, the partial solution to the heat equation is given by%
\begin{equation*}
y_{t}(x)=\sum\limits_{n=0}^{+\infty }a_{n+1}(t)\frac{x^{n}}{n!}%
=\sum\limits_{n=0}^{+\infty }a_{2n+1}(t)\frac{x^{2n}}{(2n)!}%
+\sum\limits_{n=0}^{+\infty }a_{2n+2}(t)\frac{x^{2n+1}}{(2n+1)!},
\end{equation*}%
or%
\begin{equation*}
y_{t}(x)=\sum\limits_{n=0}^{+\infty }\frac{\partial ^{n}a_{1}(t)}{\partial
t^{n}}\frac{\left( \sqrt{\lambda }x\right) ^{2n}}{(2n)!}+\sum%
\limits_{n=0}^{+\infty }\frac{\partial ^{n}a_{2}(t)}{\partial t^{n}}\frac{%
\left( \sqrt{\lambda }x\right) ^{2n+1}}{(2n+1)!},
\end{equation*}%
$\forall x\in \lbrack 0,a],$ and $a,$ $t>0.$ Clearly, the partial solution
to the heat equation in this case requires analytic functions $a_{1}(t),$
and $a_{2}(t)$, as well as conditions such as those of Definition \ref%
{MTMFDef}\ on the sequences $\frac{n!\lambda ^{n}}{(2n)!}\frac{\partial
^{n}a_{1}(t)}{\partial t^{n}},$ and $\frac{n!\lambda ^{n}}{(2n+1)!}\frac{%
\partial ^{n}a_{2}(t)}{\partial t^{n}}$, in order for $y_{t}(x)$ to be well
defined. In particular, for $a_{1}(t)=e^{-t},$ and $a_{2}(t)=0,$ $\forall
t>0,$ we have the MTMF%
\begin{equation*}
y_{t}(x)=e^{-t}\sum\limits_{n=0}^{+\infty }(-1)^{n}\frac{(\sqrt{\lambda }%
x)^{2n}}{(2n)!}=e^{-t}\cos \left( \sqrt{\lambda }x\right) ,
\end{equation*}%
with $\frac{\partial y_{t}(x)}{\partial x}=-\sqrt{\lambda }e^{-t}\sin \left(
\sqrt{\lambda }x\right) ,$ $\frac{\partial ^{2}y_{t}(x)}{\partial x^{2}}%
=-\lambda e^{-t}\cos \left( \sqrt{\lambda }x\right) ,$ and $\frac{\partial
y_{t}(x)}{\partial t}=-y_{t}(x),$ so that (\ref{HeatEq1}) is satisfied.
Similarly, for $a_{1}(t)=0,$ and $a_{2}(t)=e^{-t},$ $\forall t>0,$ we have
the MTMF%
\begin{equation*}
y_{t}(x)=e^{-t}\sum\limits_{n=0}^{+\infty }(-1)^{n}\frac{(\sqrt{\lambda }%
x)^{2n+1}}{(2n+1)!}=e^{-t}\sin \left( \sqrt{\lambda }x\right) ,
\end{equation*}%
with $\frac{\partial y_{t}(x)}{\partial x}=\sqrt{\lambda }e^{-t}\cos (\sqrt{%
\lambda }x),$ $\frac{\partial ^{2}y_{t}(x)}{\partial x^{2}}=-\lambda
e^{-t}\sin (\sqrt{\lambda }x),$ and $\frac{\partial y_{t}(x)}{\partial t}%
=-y_{t}(x),$ so that (\ref{HeatEq1}) is once again satisfied.

\item Wave Equation: Take again $g(x)=x$, so that this special case of the
PDE\ (\ref{2ndspacetimewaveODE}) becomes the famous wave equation%
\begin{equation}
\frac{\partial ^{2}y_{t}(x)}{\partial x^{2}}=\xi ^{2}\frac{\partial
^{2}y_{t}(x)}{\partial t^{2}},  \label{WaveEquation}
\end{equation}%
$\forall x\in \lbrack 0,a],$ and $a,$ $t>0,$ and proceeding as in the
previous example, we have that (\ref{WaveEquation}) yields%
\begin{equation*}
\sum\limits_{n=0}^{+\infty }a_{n+3}(t)\frac{x^{n}}{n!}=\xi
^{2}\sum\limits_{n=0}^{+\infty }\frac{\partial ^{2}a_{n+1}(t)}{\partial t^{2}%
}\frac{x^{n}}{n!},\text{ }\forall x\in \lbrack 0,a].
\end{equation*}%
Thus, we must have%
\begin{eqnarray*}
a_{3}(t) &=&\xi ^{2}\frac{\partial ^{2}a_{1}(t)}{\partial t^{2}},\text{ }n=0,
\\
a_{4}(t) &=&\xi ^{2}\frac{\partial ^{2}a_{2}(t)}{\partial t^{2}},\text{ }n=1,
\\
a_{5}(t) &=&\xi ^{2}\frac{\partial ^{2}a_{3}(t)}{\partial t^{2}}=\xi ^{4}%
\frac{\partial ^{4}a_{1}(t)}{\partial t^{4}},\text{ }n=2, \\
a_{6}(t) &=&\xi ^{2}\frac{\partial ^{2}a_{4}(t)}{\partial t^{2}}=\xi ^{4}%
\frac{\partial ^{4}a_{2}(t)}{\partial t^{4}},\text{ }n=3,
\end{eqnarray*}%
and so forth, that is, the functions $a_{n}(t),$ $n\in \mathbb{N},$ must
satisfy%
\begin{eqnarray*}
a_{2n+1}(t) &=&\xi ^{2n}\frac{\partial ^{2n}a_{1}(t)}{\partial t^{2n}},\text{
}n=1,2,...,\text{ and} \\
a_{2n+2}(t) &=&\xi ^{2n}\frac{\partial ^{2n}a_{2}(t)}{\partial t^{2n}},\text{
}n=1,2,...,
\end{eqnarray*}%
$t>0.$ Then, the partial solution to the wave equation is given by%
\begin{eqnarray*}
y_{t}(x) &=&\sum\limits_{n=0}^{+\infty }a_{n+1}(t)\frac{x^{n}}{n!}%
=\sum\limits_{n=0}^{+\infty }a_{2n+1}(t)\frac{x^{2n}}{(2n)!}%
+\sum\limits_{n=0}^{+\infty }a_{2n+2}(t)\frac{x^{2n+1}}{(2n+1)!} \\
&=&\sum\limits_{n=0}^{+\infty }\xi ^{2n}\frac{\partial ^{2n}a_{1}(t)}{%
\partial t^{2n}}\frac{x^{2n}}{(2n)!}+\sum\limits_{n=0}^{+\infty }\xi ^{2n}%
\frac{\partial ^{2n}a_{2}(t)}{\partial t^{2n}}\frac{x^{2n+1}}{(2n+1)!},
\end{eqnarray*}%
$\forall x\in \lbrack 0,a],$ and $a,$ $t>0.$ The functions $a_{1}(t)$ and $%
a_{2}(t)$, must satisfy similar conditions as in the previous example, in
order for $y_{t}(x)$ to be well defined.\newline
Once again we obtain separability by setting $a_{1}(t)=e^{-t},$ and $%
a_{2}(t)=0,$ $\forall t>0,$ which leads to the MTMF solution given by%
\begin{equation*}
y_{t}(x)=e^{-t}\sum\limits_{n=0}^{+\infty }(-1)^{n}\frac{\left( \xi x\right)
^{2n}}{(2n)!}=e^{-t}\cos (\xi x),
\end{equation*}%
so that (\ref{WaveEquation}) is satisfied. Similarly, for $\lambda =1,$ $%
a_{1}(t)=0,$ and $a_{2}(t)=e^{-t},$ $\forall t>0,$ we have the MTMF solution
given by%
\begin{equation*}
y_{t}(x)=e^{-t}\sum\limits_{n=0}^{+\infty }(-1)^{n}\frac{(\xi x)^{2n+1}}{%
(2n+1)!}=e^{-t}\sin (\xi x),
\end{equation*}%
such that (\ref{WaveEquation}) is also satisfied.
\end{enumerate}
\end{example}

Next, we propose a general framework that provides a connection of
multivariate Taylor measure functions and differential equations.

\section{Taylor Measures and Differential Equations}

Our goal in this section is to find general conditions on the structure of
MTMFs, in order for them to be solutions to general differential equations
of specific forms. Before we begin, we consider a modification of Definition
(\ref{MTMFDef}) to functions on the complex plane $\mathbb{C}$. We use $%
\mathbb{M}$ to denote $\Re $ or $\mathbb{C}$, and consider a function $f^{B}:%
\mathbb{M}^{p}\rightarrow \mathbb{M}$, defined via the finite, signed Taylor
measure $f^{B}(\mathbf{x})=T_{g(\mathbf{x}),\mathbf{a}(\mathbf{x}%
)}(B)<+\infty ,$ where $\mathbf{a}(\mathbf{x})=[a_{0}(\mathbf{x}),$ $a_{1}(%
\mathbf{x}),...]\in \mathbb{M}^{\infty },$ has elements that are real or
complex functions$,$ and $a_{n},g:\mathbb{M}^{p}\rightarrow \mathbb{M},$ are
real or complex, measurable functions in $(\mathbb{M}^{p},\mathcal{B}(%
\mathbb{M}^{p}))$, for all $B\in \mathcal{B}(\mathbb{N}),$ and $n\in \mathbb{%
N}.$ The function $f^{B}$ will still be referred to as a multivariate Taylor
measure function, and we use the notation $\mathcal{F}_{\mathbb{M}}^{\mathbb{%
M}^{p}}$ to denote the corresponding space of functions. Finally, when $p=1$%
, we write $f^{\prime }(x)$ to denote $\frac{df(x)}{dx}.$

\subsection{Linear ordinary differential equations and MTMFs}

Assume that $p=1$, and consider the general framework for linear ordinary
differential equations (LODE). Let $L_{m}[\cdot ]$ the usual $m$th order
linear operator defined by%
\begin{equation}
L_{m}[y]=p_{m}(x)\frac{d^{m}y}{dx^{n}}+p_{m-1}(x)\frac{d^{m-1}y}{dx^{m-1}}%
+...+p_{1}(x)\frac{dy}{dx}+p_{0}(x)y,  \label{LinOperatornthorder}
\end{equation}%
where the $\{p_{i}(x)\}_{i=0}^{m}$ are $\mathbb{M}-$valued, continuous
functions and $p_{m}(x)\neq 0,$ on the interval $x\in I=[a,b]\subset \mathbb{%
M}$. Define $m$ initial conditions%
\begin{equation}
C_{j}^{(1)}[y]:=\frac{d^{j}y}{dx^{j}}(x_{0})=A_{j},\text{ }j=0,1,2,...,m-1
\label{Initial1}
\end{equation}%
for some constants $A_{j}\in \mathbb{M},$ and $x_{0}\in (a,b),$ and write $%
\mathcal{C}^{(1)}$ as shorthand for all conditions in (\ref{Initial1}), and $%
m$ (linear) boundary conditions by%
\begin{equation}
C_{j}^{(2)}[y]:=\sum\limits_{r=0}^{m-1}\left( M_{jr}\frac{d^{r}y}{dx^{r}}%
(a)+N_{jr}\frac{d^{r}y}{dx^{r}}(b)\right) =0,\text{ }j=1,2,...,m,
\label{Boundary1}
\end{equation}%
where $\{M_{jk},N_{jk}\}$ are given complex constants, and write $\mathcal{C}%
^{(2)}$ as shorthand for all conditions in (\ref{Boundary1}).

We will consider the linear ordinary differential equation of the form%
\begin{equation}
L_{m}[y]=h(x),  \label{LODE1}
\end{equation}%
where $h:\mathbb{M}\rightarrow \mathbb{M}$, a known continuous function in $(%
\mathbb{M},\mathcal{B}(\mathbb{M})),$ with $h(x)=T_{h_{g}(x),\mathbf{h}%
(x)}(B),$ for a given $B\in \mathcal{B}(\mathbb{N}),$ $\mathbf{h}%
(x)=[h_{0}(x),$ $h_{1}(x),$ $...]$ $\in $ $\mathbb{M}^{\infty },$ so that%
\begin{equation*}
h(x)=\sum\limits_{n\in B}h_{n}(x)\frac{h_{g}(x)^{n}}{n!},
\end{equation*}%
where we will assume that $h,h_{n},h_{g}\neq 0,$ are continuous functions.
In addition, we will impose either conditions $C^{(1)}$ or $C^{(2)}$ and
study the LODE\ problems%
\begin{equation}
L_{m}[y]=h(x),\text{ under conditions }\mathcal{C}^{(1)},
\label{InitialLODE1}
\end{equation}%
or the boundary value problem (BVP) given by%
\begin{equation}
L_{m}[y]=h(x),\text{ under conditions }\mathcal{C}^{(2)}.  \label{BVPLODE1}
\end{equation}

Note here that since $\{p_{i}(x)\}_{i=0}^{m}$ and $h(x)$ are all assumed to
be continuous, then there exists a unique solution to the problem (\ref%
{InitialLODE1}) (e.g., see \cite{kelley2010theory}, Theorem 6.2), however,
for the BVP (\ref{BVPLODE1}) additional conditions are required depending on
the forms of the conditions (see \cite{zwillinger2021handbook}, pg. 268).

In order to connect MTMFs with LODEs, we assume that $y=f^{B}(x)=T_{g(x),%
\mathbf{a}(x)}(B)\in \mathcal{F}_{\mathbb{M}}^{\mathbb{M}},$ is a solution
of the LODE (\ref{LODE1}), for the given $B\in \mathcal{B}(\mathbb{N})$,
that is used in the definition of $h(x).$ By the linearity property of $L$
we have%
\begin{equation*}
L_{m}[T_{g(x),\mathbf{a}(x)}(B)]=L_{m}\left[ \sum\limits_{n\in B}a_{n}(x)%
\frac{g(x)^{n}}{n!}\right] =\sum\limits_{n\in B}\frac{1}{n!}L_{m}\left[
a_{n}g^{n}\right] =\sum\limits_{n\in B}\frac{1}{n!}h_{n}(x)h_{g}(x)^{n},
\end{equation*}%
and using equation (\ref{DerivFormula1}) we have a first formula for the
operator $L_{m}$ in terms of the unknown measurable functions $g(x)$ and $%
\mathbf{a}(x),$ given by%
\begin{equation}
L_{m}\left[ a_{n}g^{n}\right] =\sum\limits_{k=0}^{m}p_{k}(x)\sum%
\limits_{l=0}^{k}c_{l}^{k}\frac{\partial ^{k-l}a_{n}(x)}{\partial ^{k-l}x}%
\sum\limits_{l_{1}+...+l_{n}=l}c_{l_{1},...,l_{n}}^{l}\prod\limits_{d=1}^{n}%
\frac{\partial ^{k_{d}}g(x)}{\partial ^{k_{d}}x}.  \label{LODE2}
\end{equation}%
We look for solutions to the problem (\ref{LODE1}), where $g(x)$ and $%
\mathbf{a}(x)$ are unknown functions that satisfy%
\begin{equation}
L_{m}\left[ a_{n}g^{n}\right] =h_{n}(x)h_{g}(x)^{n},  \label{MTMFODEs}
\end{equation}%
for given continuous functions $h_{n}$ and $h_{g},$ and all $n\in B.$ The
differential equation (\ref{MTMFODEs}) will be called the MTMF ODE of the $m$%
th order, for each $n\in B$.

The following result encapsulates the steps in obtaining the solution of the
problem (\ref{LODE1}) in terms of a MTMF $T_{g(x),\mathbf{a}(x)}(B)$, by
combining the solutions to the differential equation (\ref{MTMFODEs}), as
well as, illustrates how the solution is connected to the underlying
functions $g(x)$ and $\mathbf{a}(x)$. In particular, the solution to the
problem (\ref{LODE1}) is a MTMF for a specific $B\in \mathcal{B}(\mathbb{N}%
), $ and functions $a_{n}(x)$ and $g(x)$, that need not be analytic, but
they need to satisfy weaker conditions, e.g., be continuously,
differentiable up to the $m$th order. We will write $W(\mathbf{z}_{m})$ as
short hand for the Wronskian $W(z_{1},...,z_{m}),$ and similarly, $W_{k}(%
\mathbf{z}_{m})$ for $W_{k}(z_{1},...,z_{m}),$ the Wronskian $W(\mathbf{z}%
_{m})$ with its $k$-column replaced by $(0,0,...,0,1)$, where $\mathbf{z}%
_{m}=\{z_{1},...,z_{m}\},$ a collection of linearly independent functions ($%
W(\mathbf{z}_{m})\neq 0 $)$.$

\begin{theorem}[MTMF solution to ODEs]
\label{SolODEsThm}Consider the LODE (\ref{LODE1}), where $y=f^{B}(x)=T_{g(x),%
\mathbf{a}(x)}(B)=\sum\limits_{n\in B}a_{n}(x)\frac{g(x)^{n}}{n!}\in
\mathcal{F}_{\mathbb{M}}^{\mathbb{M}},$ for some $B\in \mathcal{B}(\mathbb{N}%
),$ and measurable functions $a_{n}(x)$ and $g(x)$, that are continuously,
differentiable up to the $m$th order. Then, assuming that the integrability
conditions (\ref{Integrable1}) hold, the general solution to the LODE (\ref%
{LODE1}) exists and is given by (\ref{GenSol2}). The functions $g(x)$ and $%
a_{n}(x)$, are given by Equations (\ref{Gfunc1}) and (\ref{a_nrelation2sols}%
), respectively.\newline
For the problem \ref{InitialLODE1} there exists a unique solution which is
of the form of the MTMF of Equation (\ref{GenSol2}).
\end{theorem}

\begin{proof}
\textit{Individual LODEs for each }$n\in B$\textit{:} Let $%
y_{n}(x)=a_{n}(x)g(x)^{n},$ so that the differential equation (\ref{MTMFODEs}%
) can be written as%
\begin{equation}
L_{m}\left[ y_{n}\right] =p_{m}(x)\frac{d^{m}y_{n}}{dx^{n}}+p_{m-1}(x)\frac{%
d^{m-1}y_{n}}{dx^{m-1}}+...+p_{1}(x)\frac{dy_{n}}{dx}%
+p_{0}(x)y_{n}=h_{n}(x)h_{g}(x)^{n},  \label{LODEindividual}
\end{equation}%
or equivalently, we need to solve the differential equation%
\begin{equation}
\frac{d^{m}y_{n}}{dx^{n}}+\frac{p_{m-1}(x)}{p_{m}(x)}\frac{d^{m-1}y_{n}}{%
dx^{m-1}}+...+\frac{p_{1}(x)}{p_{m}(x)}\frac{dy_{n}}{dx}+\frac{p_{0}(x)}{%
p_{m}(x)}y_{n}=\frac{h_{n}(x)}{p_{m}(x)}h_{g}(x)^{n}.
\label{DiffEqWronkians1}
\end{equation}%
\newline
\textit{Existence and Uniqueness:} Since the functions $\{p_{i}(x)%
\}_{i=0}^{m},$ $h_{n}(x)$ and $h_{g}(x)$, are continuous, there exists a
solution to the LODE (\ref{LODE1}), and it is unique under conditions $%
\mathcal{C}^{(1)}.$ Under conditions $\mathcal{C}^{(2)},$ uniqueness depends
on the specific forms of the BVP (e.g., for special cases see \cite%
{zwillinger2021handbook}, pg. 268).\newline
\textit{Variation of Parameters:} There are many different approaches to
solving (\ref{DiffEqWronkians1}), but we will adopt the variation of
parameters method (more details on this classic method can be found in \cite%
{zwillinger2021handbook}). Consider the homogeneous version of (\ref%
{DiffEqWronkians1}), given by%
\begin{equation}
\frac{d^{m}y_{n}}{dx^{n}}+\frac{p_{m-1}(x)}{p_{m}(x)}\frac{d^{m-1}y_{n}}{%
dx^{m-1}}+...+\frac{p_{1}(x)}{p_{m}(x)}\frac{dy_{n}}{dx}+\frac{p_{0}(x)}{%
p_{m}(x)}y_{n}=0,  \label{DiffEqWronkiansHomo1}
\end{equation}%
and assume that $\{z_{1},...,z_{m}\}$ is a set of linearly independent
solutions of (\ref{DiffEqWronkiansHomo1})$.$ Note that the latter is the
same set regardless of the value of $n,$ and each $z_{i}\in \mathcal{F}_{%
\mathbb{M}}^{\mathbb{M}},$ $i=1,2,...,m$, using the trivial MTMF
representation of equation (\ref{MTFsinglefunction}).\newline
Then all solutions of (\ref{DiffEqWronkians1}) are given by%
\begin{equation}
z_{n}(x)=z_{0,n}(x)+\sum\limits_{i=1}^{m}c_{i}z_{i}(x)\in \mathcal{F}_{%
\mathbb{M}}^{\mathbb{M}},  \label{GenSol1}
\end{equation}%
$x\in I$, where $z_{0,n}(x)$ is a partial solution of (\ref{DiffEqWronkians1}%
), and $c_{1},c_{2},...,c_{m}$ are constants that will be determined from
the conditions $\mathcal{C}^{(1)}$ or $\mathcal{C}^{(2)}$. The partial
solution can be written in terms of Wronskians via
\begin{equation}
z_{0,n}(x)=\sum\limits_{k=1}^{m}z_{k}(x)\int\limits_{x_{0}}^{x}\frac{W_{k}(%
\mathbf{z}_{m})(t)}{W(\mathbf{z}_{m})(t)}\frac{h_{n}(t)}{p_{m}(t)}%
h_{g}(t)^{n}\mu _{1}(dt)\in \mathcal{F}_{\mathbb{M}}^{\mathbb{M}},
\label{PartialSol1}
\end{equation}%
for any $x_{0}\in I$ and all $n\in B$. By construction, $z_{0,n}(x)$
satisfies the initial conditions%
\begin{equation}
\frac{d^{m-1}z_{0,n}(x_{0})}{dx^{m-1}}=...=z_{0,n}(x_{0})=0.
\label{Conditionsz0}
\end{equation}%
\newline
\textit{Integrability Requirements:} In view of (\ref{PartialSol1}), we will
require the conditions
\begin{equation}
u_{k,m}(x)=\frac{h(x)W_{k}(\mathbf{z}_{m})(x)}{p_{m}(x)W(\mathbf{z}_{m})(x)},%
\text{ are integrable with respect to }\mu _{1},  \label{Integrable1}
\end{equation}%
for all $k=1,2,...,m,$ in order for the solutions to be well defined (note
that (\ref{Integrable1}) implies that $\frac{W_{k}(\mathbf{z}_{m})(t)}{W(%
\mathbf{z}_{m})(t)}\frac{h_{n}(t)}{p_{m}(t)}h_{g}(t)^{n}$ is integrable for
all $n\in B).$\newline
\textit{Aggregating the individual Solutions:} Now combining the solutions (%
\ref{GenSol1}), for each $n\in B$ (see \cite{BirkhoffRota1991}, Lemma 2, pg
41)$,$ we obtain the general solution of the form%
\begin{equation}
T_{g(x),\mathbf{a}(x)}(B)=\sum\limits_{n\in B}\frac{1}{n!}%
z_{n}(x)=\sum\limits_{n\in B}\frac{1}{n!}\left[ z_{0,n}(x)+\sum%
\limits_{i=1}^{m}c_{i}z_{i}(x)\right] ,  \label{GenSol2}
\end{equation}%
where the constants $c_{1},c_{2},...,c_{m}$ are specified by the conditions $%
\mathcal{C}^{(1)}$ or $\mathcal{C}^{(2)}$.

\textit{Connection to the MTMF:} In order to appreciate the relationship
between the unknown functions $a_{n}$ and $g,$ and the MTMF of Equation (\ref%
{GenSol2}), we exemplify their connection next. Since $%
y_{n}(x)=a_{n}(x)g(x)^{n},$ is the solution to the differential equation (%
\ref{DiffEqWronkians1}), using Equation (\ref{GenSol1}), we have%
\begin{equation}
a_{n}(x)g(x)^{n}=z_{n}(x)=z_{0,n}(x)+\sum\limits_{i=1}^{m}c_{i}z_{i}(x),
\label{Connectaz}
\end{equation}%
for all $x\in \mathbb{M}.$ For $x\neq x_{0},$ we can write%
\begin{equation*}
y_{n}(x)=\sum\limits_{k=1}^{m}z_{k}(x)\int\limits_{x_{0}}^{x}\frac{W_{k}(%
\mathbf{z}_{m})(t)}{p_{m}(t)W(\mathbf{z}_{m})(t)}h_{n}(t)h_{g}(t)^{n}\mu
_{1}(dt)+\sum\limits_{k=1}^{m}c_{k}\frac{z_{k}(x)}{x-x_{0}}%
\int\limits_{x_{0}}^{x}\mu _{1}(dt),
\end{equation*}%
so that the solution $y_{n}$ can be written in terms of Green's function $%
w_{n,m}(x,t,\mathbf{z}_{m})$ (see \cite{zwillinger2021handbook}, pg 318) as%
\begin{equation*}
y_{n}(x)=\int\limits_{x_{0}}^{x}\sum\limits_{k=1}^{m}z_{k}(x)\left[ \frac{%
W_{k}(\mathbf{z}_{m})(t)}{p_{m}(t)W(\mathbf{z}_{m})(t)}+\frac{c_{k}}{\left(
x-x_{0}\right) h_{n}(t)h_{g}(t)^{n}}\right] h_{n}(t)h_{g}(t)^{n}\mu _{1}(dt),
\end{equation*}%
and thus allowing us to utilize the corresponding theory.\newline
In order to obtain tractable results for any $x\in \mathbb{M},$ letting%
\begin{equation}
w_{n,m}(x,x_{0},\mathbf{z}_{m})=\sum\limits_{k=1}^{m}z_{k}(x)\left[
\int\limits_{x_{0}}^{x}\frac{W_{k}(\mathbf{z}_{m})(t)}{p_{m}(t)W(\mathbf{z}%
_{m})(t)}h_{n}(t)h_{g}(t)^{n}\mu _{1}(dt)+c_{k}\right] ,  \label{wnm}
\end{equation}%
with $w_{n,m}(x_{0},x_{0},\mathbf{z}_{m})=\sum%
\limits_{k=1}^{m}c_{k}z_{k}(x_{0}),$ and using equation (\ref{PartialSol1}),
we write equation (\ref{Connectaz}) as%
\begin{eqnarray*}
y_{n}(x) &=&a_{n}(x)g(x)^{n}=z_{0,n}(x)+\sum\limits_{i=1}^{m}c_{i}z_{i}(x) \\
&=&\sum\limits_{k=1}^{m}z_{k}(x)\int\limits_{x_{0}}^{x}\frac{W_{k}(\mathbf{z}%
_{m})(t)}{p_{m}(t)W(\mathbf{z}_{m})(t)}h_{n}(t)h_{g}(t)^{n}\mu
_{1}(dt)+\sum\limits_{k=1}^{m}c_{k}z_{k}(x),
\end{eqnarray*}%
so that the solution $y_{n}$ can be written as%
\begin{equation}
y_{n}(x)=a_{n}(x)g(x)^{n}=w_{n,m}(x,x_{0},\mathbf{z}_{m}),  \label{SolGreen1}
\end{equation}%
where $a_{n}(x)$ and $g(x)$ are unknown measurable functions, not
necessarily uniquely determined since there are many combinations of
functions that can satisfy (\ref{SolGreen1}), and therefore, we have some
flexibility in the way we choose to represent the LODE solution as an MTMF.
This is expected since the MTMF representation is not unique, but as the
functions $y_{n}(x)$ are aggregated in (\ref{GenSol2}) to give the LODE
solution, they lead to a uniquely determined function. More precisely, the
collection of functions $a_{n}(x)$ and $g(x)$ satisfying (\ref{SolGreen1})
form an equivalence class%
\begin{equation*}
\lbrack a_{n},g]=\left\{ u_{n},u\in \mathcal{F}_{\mathbb{M}}^{\mathbb{M}%
}:u_{n}(x)u(x)^{n}=a_{n}(x)g(x)^{n}\right\} ,
\end{equation*}%
where $a_{n}$ and $g$ satisfy (\ref{SolGreen1}), i.e., $[a_{n},g]\symbol{126}%
[u_{n},u]$ is a reflexive, symmetric and transitive equivalence relation.%
\newline
We illustrate below how to obtain one such pair of functions $a_{n}\ $and $g$%
. When $g$ is chosen, we have immediately that%
\begin{equation}
a_{n}(x)=\frac{w_{n,m}(x,x_{0},\mathbf{z}_{m})}{g(x)^{n}},
\end{equation}%
for all $n\in B$. Let%
\begin{equation}
u_{m,k,n}(x)=\frac{h_{n}(x)W_{k}(\mathbf{z}_{m})(x)}{p_{m}(x)W(\mathbf{z}%
_{m})(x)},  \label{Ukmn}
\end{equation}%
and%
\begin{equation}
q_{n,m}(x,x_{0},\mathbf{z}_{m})=\frac{1}{h_{g}(x)^{n}}\frac{\partial }{%
\partial x}w_{n,m}(x,x_{0},\mathbf{z}_{m}).  \label{Qnm}
\end{equation}%
Differentiating (\ref{SolGreen1}) with respect to $x$ yields
\begin{gather*}
a_{n}^{\prime }(x)g(x)^{n}+na_{n}(x)g^{\prime }(x)g(x)^{n-1}=\left(
a_{n}^{\prime }(x)+n\frac{g^{\prime }(x)}{g(x)}a_{n}(x)\right) g(x)^{n} \\
=\frac{\partial }{\partial x}w_{n,m}(x,x_{0},\mathbf{z}_{m})=\sum%
\limits_{k=1}^{m}\frac{dz_{k}(x)}{dx}\left[ \int\limits_{x_{0}}^{x}\frac{%
W_{k}(\mathbf{z}_{m})(t)}{p_{m}(t)W(\mathbf{z}_{m})(t)}h_{n}(t)h_{g}(t)^{n}%
\mu _{1}(dt)+c_{k}\right]  \\
+\sum\limits_{k=1}^{m}z_{k}(x)\frac{W_{k}(\mathbf{z}_{m})(x)}{p_{m}(x)W(%
\mathbf{z}_{m})(x)}h_{n}(x)h_{g}(x)^{n} \\
=\sum\limits_{k=1}^{m}\frac{dz_{k}(x)}{dx}\left[ \int%
\limits_{x_{0}}^{x}u_{m,k,n}(t)h_{g}(t)^{n}\mu _{1}(dt)+c_{k}\right]
+\sum\limits_{k=1}^{m}z_{k}(x)u_{m,k,n}(x)h_{g}(x)^{n},
\end{gather*}%
and collecting $h_{g}(x)^{n}$ from both summands on the right hand side, we
have%
\begin{gather*}
\left( a_{n}^{\prime }(x)+n\frac{g^{\prime }(x)}{g(x)}a_{n}(x)\right)
g(x)^{n}=h_{g}(x)^{n} \\
\left[ \sum\limits_{k=1}^{m}\frac{1}{h_{g}(x)^{n}}\frac{dz_{k}(x)}{dx}\left[
\int\limits_{x_{0}}^{x}u_{m,k,n}(t)h_{g}(t)^{n}\mu _{1}(dt)+c_{k}\right]
+\sum\limits_{k=1}^{m}z_{k}(x)u_{m,k,n}(x)\right]  \\
=h_{g}(x)^{n}q_{n,m}(x,x_{0},\mathbf{z}_{m}).
\end{gather*}%
In view of the latter equation and (\ref{LODEindividual}), an intuitively
appealing choice for $g$ is to select
\begin{equation}
g(x)=h_{g}(x),  \label{Gfunc1}
\end{equation}%
and as a result, $a_{n}$ must satisfy%
\begin{equation}
a_{n}(x)=\frac{w_{n,m}(x,x_{0},\mathbf{z}_{m})}{h_{g}(x)^{n}}.
\label{a_nrelation2sols}
\end{equation}%
Finally, note that by construction, $a_{n}$ is the solution to the first
order ODE%
\begin{equation*}
a_{n}^{\prime }(x)+n\frac{h_{g}^{\prime }(x)}{h_{g}(x)}%
a_{n}(x)=q_{n,m}(x,x_{0},\mathbf{z}_{m}),
\end{equation*}%
$x_{0},x\in I.$
\end{proof}

In order to illustrate the importance of the MTMF (\ref{GenSol2}) solution
to the ODE (\ref{LODE1}), we consider some special cases in the following
example.

\begin{example}[Special Cases]
We consider the LODE (\ref{LODE1}) and the MTMFs of equations (\ref%
{DiffTMFgen}) and (\ref{DiffTMF}), under different scenarios.

\begin{enumerate}
\item Classic ODE form: Take $B=\{1\}$, and $g(x)=1$, so that $y(x)=T_{1,%
\mathbf{a}(x)}(\{1\})=a(x),$ where $\mathbf{a}(x)=[0,$ $a(x),$ $0,...].$
Then the problem (\ref{LODE1}) reduces to the usual linear ODE given by
\begin{equation*}
L_{m}[a]=h(x),
\end{equation*}%
where $h(x)=h_{1}(x)h_{g}(x),$ with solution given in Theorem \ref%
{SolODEsThm}.

\item MTMF decomposition illustrated: Consider an inhomogeneous Euler LODE
of the form%
\begin{equation}
y^{\prime \prime }(x)-2y^{\prime }(x)+y=\frac{1}{x}e^{x},  \label{Eulerex1}
\end{equation}%
$x>0,$ with the corresponding HODE having linearly independent solutions $%
y_{1}(x)=e^{x},$ and $y_{2}(x)=xe^{x},$ $x>0$. A partial solution is%
\begin{equation*}
y_{3}(x)=xe^{x}(\log x-1),
\end{equation*}%
$x>0$, so that the general solution is given by
\begin{equation}
y(x)=c_{1}e^{x}+c_{2}xe^{x}+xe^{x}(\log x-1)=e^{x}\left[ c_{1}+c_{2}x+x(\log
x-1)\right]   \label{ExEulergensol1}
\end{equation}%
for $x>0,$ and arbitrary constants $c_{1},c_{2}\in \Re $. We will show how
to obtain the latter via the MTMF of Theorem \ref{SolODEsThm}.\newline
Start by writing the operator $L_{2}[\cdot ]$ of Equation (\ref%
{LinOperatornthorder}) as%
\begin{equation*}
L_{2}[y]=p_{2}(x)y^{\prime \prime }+p_{1}(x)y^{\prime }+p_{0}(x)y,
\end{equation*}%
with $p_{2}(x)=1,$ $p_{1}(x)=-2,$ and $p_{0}(x)=1,$ and Equation (\ref{LODE1}%
) as%
\begin{equation*}
L_{2}[y]=\frac{1}{x}e^{x}=h(x),
\end{equation*}%
so that the right hand side of (\ref{Eulerex1}) is the function%
\begin{equation*}
h(x)=\frac{1}{x}e^{x}=\sum\limits_{n=0}^{+\infty }\frac{1}{x}\frac{x^{n}}{n!}%
=\sum\limits_{n\in B}h_{n}(x)\frac{h_{g}(x)^{n}}{n!},
\end{equation*}%
and consequently, $B=\mathbb{N},$ with $h_{n}(x)=1/x,$ and $h_{g}(x)=x,$ $%
x>0,$ continuous functions$.$ As the general solution to (\ref{Eulerex1}),
we entertain the MTMF given by%
\begin{equation*}
T_{g(x),\mathbf{a}(x)}(B)=\sum\limits_{n\in B}a_{n}(x)\frac{g(x)^{n}}{n!},
\end{equation*}%
with $B=\mathbb{N},$ $g(x)=h_{g}(x)=x,$ and $a_{n}(x)$ satisfies (\ref%
{a_nrelation2sols}). Now the individual ODEs of Equation (\ref{MTMFODEs})
take the form%
\begin{equation*}
L_{2}\left[ a_{n}g^{n}\right] =h_{n}(x)h_{g}(x)^{n}=\frac{1}{x}x^{n}=x^{n-1},
\end{equation*}%
so that the differential equation (\ref{DiffEqWronkians1}) is given by%
\begin{equation*}
y^{\prime \prime }(x)-2y^{\prime }(x)+y=x^{n-1}.
\end{equation*}%
Let $\{z_{1},z_{2}\}$ a set of linearly independent solutions of the
homogeneous ODE of the latter. We take $z_{1}(x)=e^{x},$ and $%
z_{2}(x)=xe^{x},$ $x>0$, with Wronskian%
\begin{equation*}
W(z_{1},z_{2})(x)=\left\vert
\begin{tabular}{ll}
$z_{1}(x)$ & $z_{2}(x)$ \\
$\frac{dz_{1}(x)}{dx}$ & $\frac{dz_{2}(x)}{dx}$%
\end{tabular}%
\right\vert =e^{2x}\neq 0,
\end{equation*}%
for all $x>0$, $W_{1}(z_{1},z_{2})(x)=xe^{x},$ and $%
W_{2}(z_{1},z_{2})(x)=e^{x}$. Then we write%
\begin{equation*}
\int\limits_{x_{0}}^{x}\frac{W_{1}(\mathbf{z}_{2})(t)}{p_{2}(t)W(\mathbf{z}%
_{2})(t)}h_{n}(t)h_{g}(t)^{n}\mu _{1}(dt)=\int\limits_{x_{0}}^{x}\frac{te^{t}%
}{e^{2t}}t^{n-1}\mu _{1}(dt)=\int\limits_{x_{0}}^{x}t^{n}e^{-t}\mu _{1}(dt),
\end{equation*}%
and%
\begin{equation*}
\int\limits_{x_{0}}^{x}\frac{W_{2}(\mathbf{z}_{2})(t)}{p_{2}(t)W(\mathbf{z}%
_{2})(t)}h_{n}(t)h_{g}(t)^{n}\mu _{1}(dt)=\int\limits_{x_{0}}^{x}\frac{e^{t}%
}{e^{2t}}t^{n-1}\mu _{1}(dt)=\int\limits_{x_{0}}^{x}t^{n-1}e^{-t}\mu
_{1}(dt),
\end{equation*}%
so that the partial solution is given by%
\begin{equation*}
z_{0,n}(x)=xe^{x}\int\limits_{x_{0}}^{x}t^{n}e^{-t}\mu
_{1}(dt)+e^{x}\int\limits_{x_{0}}^{x}t^{n-1}e^{-t}\mu _{1}(dt),
\end{equation*}%
and the general solution is given by%
\begin{equation*}
a_{n}(x)g(x)^{n}=z_{0,n}(x)+\sum\limits_{i=1}^{m}c_{i}z_{i}(x),
\end{equation*}%
where the constants $c_{1}$ and $c_{2},$ are specified by the conditions $%
\mathcal{C}^{(1)}$ or $\mathcal{C}^{(2)}$. Now after some rudimentary
calculations, Equation (\ref{wnm}) becomes
\begin{equation*}
w_{n,2}(x,x_{0},\mathbf{z}_{2})=e^{x}\left[ x\int%
\limits_{x_{0}}^{x}t^{n}e^{-t}\mu
_{1}(dt)+\int\limits_{x_{0}}^{x}t^{n-1}e^{-t}\mu _{1}(dt)\right]
+e^{x}(c_{1}+c_{2}x).
\end{equation*}%
We verify that the MTMF $T_{x,\mathbf{a}(x)}(\mathbb{N}),$ gives the general
solution; write%
\begin{eqnarray*}
T_{x,\mathbf{a}(x)}(\mathbb{N}) &=&\sum\limits_{n=0}^{+\infty }a_{n}(x)\frac{%
g(x)^{n}}{n!}=\sum\limits_{n=0}^{+\infty }\frac{1}{n!}w_{n,2}(x,x_{0},%
\mathbf{z}_{2}) \\
&=&e^{x}\left[ x\int\limits_{x_{0}}^{x}\sum\limits_{n=0}^{+\infty }\frac{%
t^{n}}{n!}e^{-t}\mu
_{1}(dt)+\int\limits_{x_{0}}^{x}\sum\limits_{n=0}^{+\infty }\frac{t^{n}}{n!}%
t^{-1}e^{-t}\mu _{1}(dt)\right] +e^{x}(c_{1}+c_{2}x)\sum\limits_{n=0}^{+%
\infty }\frac{1}{n!} \\
&=&e^{x}\left[ x\int\limits_{x_{0}}^{x}e^{t}e^{-t}\mu
_{1}(dt)+\int\limits_{x_{0}}^{x}e^{t}t^{-1}e^{-t}\mu _{1}(dt)\right]
+e^{x}(c_{1}+c_{2}x)e^{1} \\
&=&e^{x}\left[ x^{2}-xx_{0}+\log x-\log x_{0}\right]
+e^{x}(c_{1}+c_{2}x)e^{1} \\
&=&e^{x}\left[ x(\log x+1)-xx_{0}-\log x_{0}+(c_{1}+c_{2}x)e^{1}\right]  \\
&=&e^{x}\left[ x(\log x-1)+(c_{1}e^{1}-\log x_{0})+(c_{2}e^{1}-x_{0}+2)x%
\right]  \\
&=&e^{x}\left[ x(\log x-1)+C_{1}+C_{2}x\right] ,
\end{eqnarray*}%
which is Equation (\ref{ExEulergensol1}), for arbitrary $C_{1},C_{2}\in \Re ,
$as anticipated.\newline
Now using Equation (\ref{a_nrelation2sols}) we find the functions $a_{n}$ as%
\begin{equation*}
a_{n}(x)=\frac{1}{x^{n}}w_{n,2}(x,x_{0},\mathbf{z}_{2})=\frac{e^{x}}{x^{n}}%
\left[ x\int\limits_{x_{0}}^{x}t^{n}e^{-t}\mu
_{1}(dt)+\int\limits_{x_{0}}^{x}t^{n-1}e^{-t}\mu _{1}(dt)\right] +\frac{e^{x}%
}{x^{n}}(c_{1}+c_{2}x),
\end{equation*}%
with the general solution (following the steps above) given by the MTMF%
\begin{equation*}
T_{x,\mathbf{a}(x)}(\mathbb{N})=\sum\limits_{n=0}^{+\infty }\frac{1}{n!}%
w_{n,2}(x,x_{0},\mathbf{z}_{2})=e^{x}\left[ x(\log x-1)+C_{1}+C_{2}x\right] .
\end{equation*}

\item Analytic functions: For $B=\mathbb{N},$ $g(x)=x-x_{o},$ and $%
a_{n}(x)=f^{(n)}(x_{0}),$ $n\in \mathbb{N},$ the resulting MTMF is the
Taylor series of an analytic function $f$ at the point $x_{0}$, given by%
\begin{equation}
f(x)=T_{x-x_{0},\mathbf{a}(x)}(\mathbb{N})=\sum\limits_{n=0}^{+\infty
}f^{(n)}(x_{0})\frac{(x-x_{0})^{n}}{n!}\in \mathcal{A}_{\mathbb{M}}^{\mathbb{%
M}}.
\end{equation}%
Now if a given function $h$ is analytic at $x_{0},$ we have the MTMF%
\begin{equation}
h(x)=T_{x-x_{0},\mathbf{h}(x)}(\mathbb{N})=\sum\limits_{n=0}^{+\infty
}h^{(n)}(x_{0})\frac{(x-x_{0})^{n}}{n!}\in \mathcal{A}_{\mathbb{M}}^{\mathbb{%
M}},  \label{hfunc1}
\end{equation}%
with $\mathbf{h}(x)=[h(x_{0}),$ $h^{(1)}(x_{0}),...],$ where $%
h_{n}(x)=h^{(n)}(x_{0}),$ and $h_{g}(x)=x-x_{0}.$\newline
Using equation (\ref{GenSol2}), and noting that $\left\vert
z_{i}(x)\right\vert <+\infty ,$ for all $i=1,2,...,m<+\infty ,$ we have that%
\begin{gather*}
\left\vert T_{g(x),\mathbf{a}(x)}(\mathbb{N})\right\vert
<\sum\limits_{n=0}^{+\infty }\frac{1}{n!}\left[ \left\vert
z_{0,n}(x)\right\vert +\sum\limits_{i=1}^{m}\left\vert c_{i}\right\vert
\left\vert z_{i}(x)\right\vert \right] <e\sum\limits_{i=1}^{m}\left\vert
c_{i}\right\vert \left\vert z_{i}(x)\right\vert  \\
+\sum\limits_{n=0}^{+\infty }\frac{1}{n!}\sum\limits_{k=1}^{m}\left\vert
z_{k}(x)\right\vert \int\limits_{x_{0}}^{x}\left\vert \frac{W_{k}(\mathbf{z}%
_{m})(t)}{W(\mathbf{z}_{m})(t)}\frac{h^{(n)}(x_{0})}{p_{m}(t)}%
(t-x_{0})^{n}\right\vert \mu _{1}(dt) \\
<e\sum\limits_{i=1}^{m}\left\vert c_{i}\right\vert \left\vert
z_{i}(x)\right\vert +\sum\limits_{k=1}^{m}\left\vert z_{k}(x)\right\vert
\int\limits_{x_{0}}^{x}\left\vert \frac{W_{k}(\mathbf{z}_{m})(t)}{p_{m}(t)W(%
\mathbf{z}_{m})(t)}\right\vert \left\vert h(t)\right\vert \mu
_{1}(dt)<+\infty ,
\end{gather*}%
provided that the integrability conditions (\ref{Integrable1}) hold$,$ so
that $T_{g(x),\mathbf{a}(x)}(\mathbb{N})\in \mathcal{A}_{\mathbb{M}}^{%
\mathbb{M}}.$ Therefore, even in this case where we have a countable number
of differential equations (\ref{MTMFODEs}), Theorem \ref{SolODEsThm} is
directly applicable and can provide us with the solution to the problem (\ref%
{LODE1}).\newline
Alternatively, one can define $B_{N}=\{0,1,...,N\},$ apply Theorem \ref%
{SolODEsThm} to obtain $T_{x-x_{0},\mathbf{a}(x)}$ $(B_{N})$, and then
sending $N\rightarrow \infty $ in equation (\ref{GenSol2}), we have the
general solution%
\begin{equation}
T_{g(x),\mathbf{a}(x)}(B_{N})\rightarrow T_{g(x),\mathbf{a}(x)}(\mathbb{N}),
\end{equation}%
where%
\begin{gather*}
T_{g(x),\mathbf{a}(x)}(\mathbb{N})=\sum\limits_{n=0}^{+\infty }\frac{z_{n}(x)%
}{n!}=\sum\limits_{n=0}^{+\infty }\frac{z_{0,n}(x)}{n!}+e\sum%
\limits_{i=1}^{m}c_{i}z_{i}(x) \\
=\sum\limits_{k=1}^{m}z_{k}(x)\int\limits_{x_{0}}^{x}\frac{W_{k}(\mathbf{z}%
_{m})(t)}{p_{m}(t)W(\mathbf{z}_{m})(t)}\sum\limits_{n=0}^{+\infty }\frac{%
h^{(n)}(x_{0})}{n!}(t-x_{0})^{n}\mu
_{1}(dt)+e\sum\limits_{i=1}^{m}c_{i}z_{i}(x),
\end{gather*}%
and therefore using (\ref{hfunc1}), we have%
\begin{equation*}
T_{g(x),\mathbf{a}(x)}(\mathbb{N})=\sum\limits_{k=1}^{m}z_{k}(x)\int%
\limits_{x_{0}}^{x}\frac{h(t)W_{k}(\mathbf{z}_{m})(t)}{p_{m}(t)W(\mathbf{z}%
_{m})(t)}\mu _{1}(dt)+e\sum\limits_{i=1}^{m}c_{i}z_{i}(x)<+\infty .
\end{equation*}%
In particular, when the integrability conditions (\ref{Integrable1}) hold,
owing to linearity of $L_{m}$ and linearity of the solutions $T_{x-x_{0},%
\mathbf{a}(x)}(B_{N})$ of equation (\ref{GenSol2}), the only other major
requirement on the functions involved ($f$ and $h$), is for them to be
analytic, so that the infinite sums converge.

\item Space-time ODEs: Let $h:\Re \times \mathbb{M}\rightarrow \mathbb{M}$,
a known measurable function in $(\Re \times \mathbb{M},$ $\mathcal{B}(\Re
\times \mathbb{M})),$ with $h(t,x)=T_{h_{g}(x),\mathbf{h}(t)}(B),$ for a
given $B\in \mathcal{B}(\mathbb{N}),$ where $\mathbf{h}(t)=[h_{0}(t),$ $%
h_{1}(t),$ $...]$ $\in $ $\mathbb{M}^{\infty },$ so that%
\begin{equation*}
h(t,x)=\sum\limits_{n\in B}h_{n}(t)\frac{h_{g}(x)^{n}}{n!}.
\end{equation*}%
Consider the general MTMF $f^{B}(t,x)=T_{g(x),\mathbf{a}_{t}}(B)$ of
equation (\ref{DiffTMF}). Then the operator (\ref{LinOperatornthorder})
becomes%
\begin{equation*}
L_{m}\left[ f^{B}\right] =L_{m}\left[ \sum\limits_{n\in B}a_{n}(t)\frac{%
g(x)^{n}}{n!}\right] =\sum\limits_{n\in B}\frac{1}{n!}a_{n}(t)L_{m}\left[
g(x)^{n}\right] ,
\end{equation*}%
and the corresponding problem (\ref{LODE1}) becomes
\begin{equation*}
\sum\limits_{n\in B}\frac{1}{n!}a_{n}(t)L_{m}\left[ g(x)^{n}\right]
=\sum\limits_{n\in B}\frac{1}{n!}h_{n}(t)h_{g}(x)^{n},
\end{equation*}%
so that the MTMF ODEs are given by%
\begin{equation*}
L_{m}\left[ g(x)^{n}\right] =h_{g}(x)^{n},
\end{equation*}%
for all $n\in B\in \mathcal{B}(\mathbb{N}).$ Then the general solution is
given by Theorem \ref{SolODEsThm} (keeping $t$ fixed), with $\mathbf{a}(t,x)$
free of $x$ (a constant sequence that depends on $t$). The variable $t$ is
thought of as time, while $x$ is treated as indicating a spatial location
(in 1-d when $\mathbb{M}=\Re $ and in 2-d when $\mathbb{M}=\mathbb{C}$).

\item Partial Differential Equations with respect to time: In view of the
latter space-time MTMF, we make a modification to the function $h$ and look
for the solution to the partial differential equation (PDE)%
\begin{equation*}
L_{m}\left[ f^{B}\right] =\frac{\partial h(t,x)}{\partial t}%
=\sum\limits_{n\in B}\frac{1}{n!}\frac{dh_{n}(t)}{dt}h_{g}(x)^{n}.
\end{equation*}%
Then the general solution is given once again by Theorem \ref{SolODEsThm},
with $h_{n}$ in the previous example replaced by $\frac{dh_{n}}{dt}$.
\end{enumerate}
\end{example}

\subsection{First order non-linear differential equations and MTMFs}

Consider a modification to the LODE (\ref{LODE1}), where we introduce
non-linearity in the $h$ function; we define the non-linear ordinary
differential equation (NLODE) problem by%
\begin{equation}
y^{\prime }=h(x,y),\text{ under the condition }y(0)=0,  \label{NLODE}
\end{equation}%
where $h$ is a real valued, smooth function with respect to its arguments.
Following \cite{steinberg1984lie}, a Lie series is the exponential of a
differential operator $L$ given by%
\begin{equation}
e^{tL}=\sum\limits_{n=0}^{+\infty }\frac{t^{n}}{n!}L^{n},
\label{LieOperator}
\end{equation}%
for $x\in \Re .$ Let $L=f(y)D,$ where $D=d/dy$, and $f$ some analytic
function near $0.$ Then applying (\ref{LieOperator}) to an analytic function
$g$ about $0$, yields the Lie series%
\begin{eqnarray*}
e^{tL}(g(y)) &=&\sum\limits_{n=0}^{+\infty }\frac{t^{n}}{n!}%
L^{n}(g(y))=\sum\limits_{n=0}^{+\infty }\frac{t^{n}}{n!}\left( f(y)D\right)
^{n}(g(y)) \\
&=&g(y)+f(y)\left( Dg(y)\right) +\frac{1}{2}f(y)\left( f(y)Dg(y)\right) +...,
\end{eqnarray*}%
so that a Taylor series is a special case%
\begin{equation*}
e^{tD}(g(y))=\sum\limits_{n=0}^{+\infty }\frac{t^{n}}{n!}\frac{d^{n}}{dy^{n}}%
(g(y))=g(y+t).
\end{equation*}%
Letting%
\begin{equation*}
L=\left( \frac{\partial }{\partial x}+h(x,y)\frac{\partial }{\partial y}%
\right) ,
\end{equation*}%
the solution of the problem (\ref{NLODE}), has the Lie series representation
(see \cite{zwillinger2021handbook}, eq. (150.7))%
\begin{equation*}
y(x)=\sum\limits_{n=1}^{+\infty }\frac{x^{n}}{n!}\left[ \left( \frac{%
\partial }{\partial x}+h(x,z)\frac{\partial }{\partial z}\right) ^{n}(z)%
\right] |_{z=0}.
\end{equation*}%
This is a MTMF with $B=\mathbb{N}^{+},$ $y=f^{\mathbb{N}^{+}}(x)=T_{x,%
\mathbf{a}^{h}}(\mathbb{N}^{+})\in \mathcal{F}_{\mathbb{M}}^{\mathbb{M}},$
and $\mathbf{a}^{h}=[0,$ $a_{1}^{h}(x),$ $a_{2}^{h}(x),...],$ where%
\begin{equation*}
a_{n}^{f}(x)=\left[ \left( \frac{\partial }{\partial x}+h(x,z)\frac{\partial
}{\partial z}\right) ^{n}z\right] |_{z=0},
\end{equation*}%
$n\in \mathbb{N}^{+}$.

In general, any Lie series defined by (\ref{LieOperator}) is a special case
of MTMFs, since for any operator $L$ we have%
\begin{equation}
e^{c(t)L}(g(y))=\sum\limits_{n=0}^{+\infty }\frac{c(t)^{n}}{n!}%
L^{n}(g(y))=\sum\limits_{n=0}^{+\infty }a_{n}(y)\frac{c(t)^{n}}{n!}=T_{c(t),%
\mathbf{a}}(\mathbb{N}),  \label{LieSeriesMTMF1}
\end{equation}%
where $c$ an analytic function, and $\mathbf{a}=[a_{0}(y),$ $a_{1}(y),$ $%
a_{2}(y),...],$ with $a_{n}(y)=L^{n}(g(y)),$ $n\in \mathbb{N}$.

\section{Concluding Remarks}

Motivated by Taylor's theorem, we introduced and studied properties of a
space of measurable functions that generalizes analytic functions to the
space of multivariate Taylor measure functions $\mathcal{F}_{\Re }^{\Re
^{p}} $. We proposed a new inner product $\rho _{\mathcal{F}}\left(
.,.\right) $ which allowed us to prove that the MTMF space is Hilbert and
Polish. The space emerged as a unifying framework, containing as special
cases many important functions in mathematics. We discussed some classic
function spaces and their connections to $\mathcal{F}_{\Re }^{\Re ^{p}},$
such as collections of analytic functions, continuous functions or $\mathcal{%
L}^{p}$-spaces. These spaces can be thought of as special cases of the
general MTMF space $\mathcal{F}_{\Re }^{\Re ^{p}}$ of Definition (\ref%
{MTMFDef}), with additional integrability conditions when required.

We presented first applications of the proposed MTMF space, in particular
for ODEs, PDEs and first order NLODEs, where a first connection was made
with the Lie series. Additional applications of MTMFs include the
construction of basis functions in Hilbert function spaces, representation
of solutions of PDEs, in particular space-time PDEs, and investigation of
MTMFs and Lie theory. These results will be presented elsewhere.

\section*{Acknowledgements}

I am grateful to Professor Petros Valettas, Department of Mathematics,
University of Missouri, for his suggestion regarding the end of the proof of
Theorem \ref{TaylorPolishMTMF}.

\section*{Conflict of interest}

The author declares no conflict of interest.

\section*{Funding}

This research received no external funding.

\bibliographystyle{plainnat}
\bibliography{MTMF}

\end{document}